\newcommand{\bE}{\mathbf E}
\newcommand{\bG}{\mathbf G}
\newcommand{\bK}{\mathbf K}
\newcommand{\bM}{\mathbf M}
\newcommand{\BA}{\ensuremath{\mathbb {A}}\xspace}
\newcommand{\BF}{\ensuremath{\mathbb {F}}\xspace}
\newcommand{{\BG}}{\ensuremath{\mathbb {G}}\xspace}
\newcommand{{\BK}}{\ensuremath{\mathbb {K}}\xspace}
\newcommand{\BN}{\ensuremath{\mathbb {N}}\xspace}
\newcommand{\BQ}{\ensuremath{\mathbb {Q}}\xspace}
\newcommand{\BZ}{\ensuremath{\mathbb {Z}}\xspace}
\newcommand{\CG}{\ensuremath{\mathcal {G}}\xspace}
\newcommand{\CM}{\ensuremath{\mathcal {M}}\xspace}
\newcommand{\CO}{\ensuremath{\mathcal {O}}\xspace}
\newcommand{\Ad}{{\mathrm{Ad}}}
\DeclareMathOperator{\Aut}{Aut}
\DeclareMathOperator{\Adm}{Adm}
\DeclareMathOperator{\sstr}{\sigma-{\rm str}}
\DeclareMathOperator{\Gal}{Gal}
\newcommand{\GL}{\mathrm{GL}}
\newcommand{\id}{\ensuremath{\mathrm{id}}\xspace}
\let\Im\relax
\DeclareMathOperator{\Im}{Im}
\newcommand{\inv}{{\mathrm{inv}}}
\DeclareMathOperator{\Spec}{Spec}
\newcommand{\wt}{\widetilde}
\newcommand{\wh}{\widehat}
\newcommand{\ov}{\overline}
\newtheorem{theorem}{Theorem}
\newtheorem{proposition}[theorem]{Proposition}
\newtheorem{lemma}[theorem]{Lemma}
\newtheorem{corollary}[theorem]{Corollary}
\newtheorem{axiom}[theorem]{Axiom}
\theoremstyle{definition}
\newtheorem{definition}[theorem]{Definition}
\newtheorem{remark}[theorem]{Remark}
\newtheorem{remarks}[theorem]{Remarks}
\newenvironment{altenumerate}
   {\begin{list}
      {\textup{(\theenumi)} }
      {\usecounter{enumi}
       \setlength{\labelwidth}{0pt}
       \setlength{\labelsep}{0pt}
       \setlength{\leftmargin}{0pt}
       \setlength{\itemsep}{\the\smallskipamount}
       \renewcommand{\theenumi}{\roman{enumi}}
      }}
   {\end{list}}
\newenvironment{altitemize}
   {\begin{list}
      {$\bullet$}
      {\setlength{\labelwidth}{0pt}
	   \setlength{\itemindent}{5pt}
       \setlength{\labelsep}{5pt}
       \setlength{\leftmargin}{0pt}
       \setlength{\itemsep}{\the\smallskipamount}
      }}
   {\end{list}}
\numberwithin{equation}{section}
\numberwithin{theorem}{section}
\renewcommand{\to}{%
   \ifbool{@display}{\longrightarrow}{\rightarrow}%
   }
\let\shortmapsto\mapsto
\renewcommand{\mapsto}{%
   \ifbool{@display}{\longmapsto}{\shortmapsto}%
   }
\newlength{\olen}
\newlength{\ulen}
\newlength{\xlen}
\newcommand{\xra}[2][]{%
   \ifbool{@display}%
      {\settowidth{\olen}{$\overset{#2}{\longrightarrow}$}%
       \settowidth{\ulen}{$\underset{#1}{\longrightarrow}$}%
       \settowidth{\xlen}{$\xrightarrow[#1]{#2}$}%
       \ifdimgreater{\olen}{\xlen}%
          {\underset{#1}{\overset{#2}{\longrightarrow}}}%
          {\ifdimgreater{\ulen}{\xlen}%
             {\underset{#1}{\overset{#2}{\longrightarrow}}}
             {\xrightarrow[#1]{#2}}}}%
      {\xrightarrow[#1]{#2}}
   }
\newcommand{\xyra}[2][]{%
   \settowidth{\xlen}{$\xrightarrow[#1]{#2}$}%
   \ifbool{@display}%
      {\settowidth{\olen}{$\overset{#2}{\longrightarrow}$}%
       \settowidth{\ulen}{$\underset{#1}{\longrightarrow}$}%
       \ifdimgreater{\olen}{\xlen}%
          {\mathrel{\xymatrix@M=.12ex@C=3.2ex{\ar[r]^-{#2}_-{#1} &}}}%
          {\ifdimgreater{\ulen}{\xlen}%
             {\mathrel{\xymatrix@M=.12ex@C=3.2ex{\ar[r]^-{#2}_-{#1} &}}}
             {\mathrel{\xymatrix@M=.12ex@C=\the\xlen{\ar[r]^-{#2}_-{#1} &}}}}}%
      {\mathrel{\xymatrix@M=.12ex@C=\the\xlen{\ar[r]^-{#2}_-{#1} &}}}%
   }
\newcommand{\xla}[2][]{%
   \ifbool{@display}%
      {\settowidth{\olen}{$\overset{#2}{\longleftarrow}$}%
       \settowidth{\ulen}{$\underset{#1}{\longleftarrow}$}%
       \settowidth{\xlen}{$\xleftarrow[#1]{#2}$}%
       \ifdimgreater{\olen}{\xlen}%
          {\underset{#1}{\overset{#2}{\longleftarrow}}}%
          {\ifdimgreater{\ulen}{\xlen}%
             {\underset{#1}{\overset{#2}{\longleftarrow}}}
             {\xleftarrow[#1]{#2}}}}%
      {\xleftarrow[#1]{#2}}
   }
\newcommand{\isoarrow}{%
   \ifbool{@display}{\overset{\sim}{\longrightarrow}}{\xrightarrow\sim}%
   }
\begin{document}

\title[Stratifications]{Stratifications in the reduction of Shimura varieties}
\author{X. He}
\address{University of Maryland, Department of Mathematics, College Park, MD 20742-4015 U.S.A., and Department of Mathematics, HKUST, Hong Kong }
\email{xuhuahe@math.umd.edu}
\author{M. Rapoport}
\address{Mathematisches Institut der Universit\"at Bonn, Endenicher Allee 60, 53115 Bonn, Germany}
\email{rapoport@math.uni-bonn.de}
\thanks{X. H. was partially supported by NSF DMS-1463852. M.R. was supported by the Deutsche Forschungsgemeinschaft through the grant SFB/TR 45.}

\date{\today}
\maketitle

\tableofcontents

\section{Introduction}

This paper is about characteristic subsets in the reduction modulo $p$ of a general Shimura variety with parahoric level structure. We are referring to the \emph{Newton stratification},  the \emph{Ekedahl-Oort stratification} and the \emph{Kottwitz-Rapoport stratification}. The classic work on the first two kinds of stratifications concerns the Siegel case with hyperspecial level structure which was studied by F.~Oort and others. There is also a lot of work on other Shimura varieties which are familiar moduli spaces of abelian varieties, comp. the references in \cite{V-W}. Here we are concerned with  the group-theoretic versions of these stratifications \cite{Ko1, RR, H, Vi}. To the stratifications above we add here the \emph{Ekedahl-Kottwitz-Oort-Rapoport stratification} which interpolates between  the Kottwitz-Rapoport stratification in the case of an Iwahori level structure and the  Ekedahl-Oort  stratification of Viehmann \cite{Vi} in the hyperspecial case.  

Concerning these stratifications, one can ask many interesting questions (when do they have the strong stratification property?, when are the strata equi-dimensional and  what is their dimension? what are local and global properties of the individual strata--are they smooth, are they (quasi-)affine?, and so on). There is a large body of literature on these topics, but here we are only concerned with the question of which  strata are non-empty and of the relation between these various stratifications. 
The background of these questions is addressed in the survey paper by Haines \cite{H} and the survey paper of the 
second author \cite{R:guide}, where early work on these problems is described. 

As far as the non-emptiness of Newton strata in the group-theoretic setting (in their natural index set) is concerned, the goal  is to prove the conjecture of  the second author \cite[Conj. 7.1]{R:guide}, comp. also Fargues  \cite[p.~55]{Fa} and \cite[Conj. 12.2]{H}. This question is considered by Viehmann/Wedhorn  \cite{V-W} in the PEL-case, for hyperspecial level structures.  Recent work of D.-U. Lee \cite{Lee}, M. Kisin \cite{Ki} and C.-F. Yu \cite{Yu2} addresses this problem for  Shimura varieties of PEL-type, and even of Hodge type, when the underlying group is unramified at $p$. Kisin  proves the non-emptiness of the {\it basic stratum} even when the unramifiedness assumption is dropped. The method used in all these works is based on the papers \cite{LR} and \cite{Kpoints}. There is also work by Kret \cite{Kr1, Kr2} for  Shimura varieties of PEL-type which, besides the papers  \cite{LR, Kpoints}, also uses the Arthur trace formula. 
 
 The non-emptiness of EO strata (in their natural index set) is proved  by Viehmann/Wedhorn \cite{V-W} in the PEL-case. Non-emptiness of KR strata is due in the Siegel case to Genestier and in  the {\it fake unitary case} to Haines (comp. \cite[Lemma 13.1]{H}). 
 
We also mention the papers by G\"ortz/Yu \cite{GY} and by Viehmann/Wedhorn \cite{V-W} on the relation between KR strata, resp. EO strata, and Newton strata, and the work of G\"ortz/Hoeve \cite{GHo} and of Hartwig \cite{Har} on the relation between EO strata and KR strata for parahoric level structures in the Siegel case.

\smallskip

  The purpose of the present paper is to understand how to define these characteristic subsets in the most general case and to predict their existence and their  properties. Our approach is axiomatic. 
We  formulate a series of axioms, and show that, if these axioms are satisfied, then the existence theorems follow. Here the novelty of our approach comes from the proof of one of us \cite{H2} of the Kottwitz-Rapoport conjecture from \cite{KoR1,R:guide}. In particular, our methods are purely group-theoretical and combinatorial and  use algebraic geometry only indirectly. Algebraic geometry would  become relevant when trying to check the axioms in a specific case.

We stress (if this is necessary at all!) that the aim of this paper is quite modest. We wanted to give a blueprint that could possibly be followed to achieve further progress on these questions, even for Shimura varieties that are not of PEL-type, or for the reduction modulo a prime number where the group defining the Shimura variety has bad reduction (i.e., is not unramified), or for the reduction modulo a prime above $2$. 

Our paper has an antecedent \cite{HW}. In this unpublished preprint of Wedhorn and the first of us, a similar circle of questions is addressed. But the point of view is quite different; in particular, in loc.~cit., the underlying group is supposed to be unramified at $p$. On the other hand, \cite{HW} contains results that are not superceded by the present  paper. 

The lay-out of the paper is as follows. In section \ref{s IWBG}, we fix our notation concerning Iwahori-Weyl groups and recall some facts about Kottwitz's set $B(G)$.  In section \ref{s:axioms}, we state the axioms on which our reasonings are based; we also state some auxiliary conjectures. In section \ref{s:KR} we show how to deduce from our axioms the non-emptiness theorems for KR strata. In section \ref{s: N}, we do the same for Newton strata.  In section \ref{s:EKOR}, we define Ekedahl-Kottwitz-Oort-Rapoport strata   and prove  non-emptiness theorems for them. In section \ref{s: Siegel}, we make  all our concepts explicit in the simplest case, the Siegel case. 

We thank U.~G\"ortz, P.~Scholze, T.~Wedhorn and C.-F.~Yu for helpful discussions. The first author also thanks T.~Wedhorn for the numerous discussions he had with him when \cite{HW} was conceived; the ideas developed in \cite{HW} definitely had an influence (conscious or unconscious) on the present paper.

\section{Recollections on  the Iwahori Weyl group and on $B(G)$}\label{s IWBG}
In this section, we collect some  facts concerning the group-theoretic background. Its main purpose is to introduce some notation used in the rest of the paper.

 Let $\breve \BQ_p$ be the completion of the maximal unramified extension of $\BQ_p$ in a fixed algebraic closure $\ov\BQ_p$, with ring of integers $O_{\breve \BQ_p}$. We denote by $\sigma$ its Frobenius automorphism.
 
 Let $G$ be a connected reductive algebraic group over $\BQ_p$.  We denote by $I$ an Iwahori subgroup of $G$. Since all Iwahori subgroups are conjugate, there will be no harm in only considering parahoric subgroups $K$ which contain $I$. If $K$ is such a parahoric subgroup, it defines a smooth group scheme $\CG=\CG_K$ over $\Spec \BZ_p$. We denote by $\breve K$ the subgroup $\breve K=\CG(O_{\breve \BQ_p})$ of $G(\breve \BQ_p)$. 

We fix a maximal torus $T$ which after extension of scalars is contained in a Borel subgroup of $G\otimes_{\BQ_p}\breve\BQ_p$, and such that $\breve I$ is the Iwahori subgroup fixing an alcove in the apartment attached to the split part of $T$. Denote by $N$ the normalizer of $T$. Then the {\it Iwahori Weyl group} is defined by $\tilde W=N(\breve\BQ_p)/(T(\breve\BQ_p)\cap \breve I)$, cf. \cite{HR}.  Let $W_0=N(\breve\BQ_p)/T(\breve\BQ_p)$. Then $\tilde W$ is a split extension of $W_0$ by the central  subgroup $X_*(T)_{\Gamma_0}$, with its natural $W_0$-action. Here $\Gamma_0=\Gal(\ov\BQ_p/\breve \BQ_p)$. The splitting depends on the choice of a special vertex of the base alcove that we fix in the sequel. When considering an element $\mu\in X_*(T)_{\Gamma_0}$ as an element of $\tilde W$, we write $t^\mu$. Recall the {\it $\{\mu\}$-admissible set}, associated to a conjugacy class of cocharacters of $G$,
 \begin{equation}
\Adm(\{\mu\})=\{w \in \tilde W; w \le t^{x(\underline \mu)} \text{ for some }x \in W_0\} .
\end{equation}
 Here $\underline \mu$ is the image in $X_*(T)_{\Gamma_0} $ of a dominant representative $\mu$ of the conjugacy class $\{\mu\}$, i.e., the one which lies in the Weyl chamber {\it opposite} to the unique Weyl chamber containing  the base alcove with apex at the fixed special vertex (it corresponds to a $\breve \BQ_p$-rational Borel subgroup containing $T$), cf. \cite[Rem.~4.~17]{PRS}. Also, we used the Bruhat order on $\tilde W$ defined by the choice of $I$. The definition of the Bruhat order  uses that $\tilde W$ is a split extension of $\pi_1(G)_{\Gamma_0}$ by the {\it affine Weyl group} which is a Coxeter group. 
 
Let $K$ be a parahoric subgroup containing $I$. Let 
$$W_K=\tilde W\cap \breve K=\big(N(\breve\BQ_p)\cap \breve K\big)/(T(\breve\BQ_p)\cap \breve I) .$$
   Then the Bruhat order on $\tilde W$ induces a partial order on the double coset space $
W_K \backslash \tilde W/W_K .$
Let $^K\tilde W^K$ be the set of minimal elements in their double coset by $W_K$. Then for $w, w'\in ^K\tilde W^K$, we have $W_K w W_K\leq W_K w W_K$ if and only if $w\leq w'$. 
We also set 
  \begin{equation}\label{defadm}
  \begin{aligned}
  \Adm(\{\mu\})^K=&W_K \Adm(\{\mu\}) W_K,& \quad \text{ a subset of $ \tilde W,$} \\
    \Adm(\{\mu\})_K=&W_K \backslash \Adm(\{\mu\})^K /W_K,& \quad \text{ a subset of $W_K \backslash \tilde W/W_K .$}
  \end{aligned}
  \end{equation} 
  Another object in the Iwahori Weyl group associated to the conjugacy class $\{\mu\}$ is 
  \begin{equation}\label{def tau}
  \tau_{\{\mu\}} ,
\end{equation}
  the unique element of $\tilde W$ of length zero mapping to the element $\mu^\sharp\in \pi_1(G)_{\Gamma_0}$, cf. \cite[Lemma 14]{HR}.

In the body of the paper, there appears the notation 
\begin{equation}\label{not rho}
\langle \mu,  \rho\rangle ,
\end{equation}
where $\mu$ is an element of $X_*(T)_{\Gamma_0, \BQ}=X_*(T)_{\Gamma_0}\otimes\BQ$.  To explain this notation, we recall that  $\tilde W$ contains canonically the affine Weyl group of a reduced root system $\Sigma$ such that $X_*(T)_{\Gamma_0}\otimes\BQ$ coincides with the vector space of the root system $\Sigma$.  Then, using the choice of a positive chamber made above, the first factor $\mu$ in \eqref{not rho} denotes a dominant representative and   $\rho$ denotes the half-sum of positive roots of $\Sigma$, cf. \cite[Prop. 4.21]{PRS}.
  \smallskip
  
  The $\sigma$-conjugacy classes of $G(\breve\BQ_p)$ are classified by Kottwitz in \cite{Ko1} and \cite{Ko2}. Denote their set by $B(G)$, i.e., $B(G)=G(\breve\BQ_p)/G(\breve\BQ_p)_\sigma$. We denote by $\nu$ the Newton map,
 \begin{equation}
 \nu\colon B(G)\to \big((X_*(T)_{\Gamma_0, \BQ})^+\big)^{\langle\sigma\rangle} ,
 \end{equation}
 comp. \cite[1.1]{HN2}. Here $(X_*(T)_{\Gamma_0, \BQ})^+$ denotes the intersection of  $X_*(T)_{\Gamma_0}\otimes \BQ$ with the set $X_*(T)_\BQ^+$ of dominant elements in $X_*(T)_\BQ$; the action of $\sigma$ on $(X_*(T)_{\Gamma_0, \BQ})/W_0$ is transferred to an action on $(X_*(T)_\BQ)^+$ ({\it L-action}). We denote by $\kappa$ the Kottwitz map,
 \begin{equation}
 \kappa\colon B(G)\to \pi_1(G)_\Gamma ,
 \end{equation}
 comp. \cite[(2.1)]{RV}. Here $\Gamma=\Gal(\ov\BQ_p/\BQ_p)$. 
 
 The set $B(G)$ is equipped with a partial order. For this, we note that there is a partial order on the set of dominant elements in $X_*(T)_\BQ$ (namely, the {\it dominance order}, i.e.,  $\nu\leq \nu'$ if $\nu'-\nu$ is a non-negative $\BQ$-sum of positive relative coroots). 
 We now define
 \begin{equation}\label{partordB}
 [b]\leq [b']\quad  \text{\it  if and only if }\quad  \kappa([b])=\kappa([b'])\,\,  \text{\it and }\,\, \nu([b])\leq \nu([b']) . 
 \end{equation}
 Here $\nu([b])$ and $ \nu([b'])$ denote the dominant representatives. 
 
 Let $\{\mu\}$ be a conjugacy class of cocharacters of $G$. Recall the finite subset $B(G, \{\mu\})$ of $B(G)$, consisting of {\it neutral acceptable} elements with respect to  $\{\mu\}$ in $B(G)$, cf. \cite{RV}. It is defined by 
 \begin{equation}
 B(G, \{\mu\})=\{ [b]\in B(G)\mid \kappa([b])=\mu^\natural, \nu([b])\leq \ov\mu \} .
 \end{equation}
 Here $\mu^\natural$ denotes the common image of $\mu\in\{\mu\}$ in $\pi_1(G)_\Gamma$, and $\ov\mu$ denotes the Galois average of a dominant representative of the image of an element  of $\{\mu\}$ in $X_*(T)_{\Gamma_0, \BQ}$  with respect to the L-action of $\sigma$ on $(X_*(T)_{\Gamma_0,\BQ})^+$.   The set $B(G, \{\mu\})$ inherits a partial order from $B(G)$. It has a unique minimal element, namely the unique basic element with image under $\kappa$ equal to $\mu^\natural$.  It also has a unique maximal element, determined in \cite{HN2}. 

\section{Axioms on integral models}\label{s:axioms}

\subsection{The set-up} Let $({\bf G}, \{h\})$ be a Shimura datum and let ${\bf K}=K^p K$  be an open compact subgroup of ${\mathbf G}(\BA_f)$, where $K^p \subset {\mathbf G}(\BA^p_f)$ and where $K=K_p$ is a parahoric subgroup of ${\mathbf G}(\BQ_p)$. Let $G={\mathbf G} \otimes_\BQ \BQ_p$ and let $\{\mu\}$ be the conjugacy class of cocharacters of $G$ corresponding to $\{h\}$. Here we use the opposite convention from Deligne \cite{De}: his $\mu$ is the inverse of ours. 
 
 Let ${\rm Sh}_{\mathbf K}={\rm Sh}({\mathbf G}, \{h\})_{\mathbf K}$ be the corresponding Shimura variety. It is a quasi-projective variety defined over the Shimura field $\bE$. 
We will postulate the existence of an integral model ${\mathbf S_{\bK}}$ over the ring of integers $O_E$ of the completion $E$ of $\bE$ at a place ${\bf p}$ above the fixed prime number $p$, with certain properties, which we  list below. Our aim is to  study the special fiber ${Sh}_K={\mathbf S}_{\bK}\times_{\Spec O_{\bE}}\Spec  \kappa_E$, resp. its set of geometric points,  and some stratifications on it.

\subsection{Basic axioms on integral models}

We now list our first set of axioms.

\begin{altenumerate}
\item Our first axiom concerns the change in the parahoric subgroup. 
 \begin{axiom}[Compatibility with changes in the parahoric] \label{ax funct}
  For any inclusion of parahoric subgroups $K\subset K'$, and setting $\bK=K^pK$ and $\bK'=K^pK'$, there is a natural morphism
\begin{equation}
\pi_{K, K'}\colon {\mathbf S}_{\bK}\to{\mathbf S}_{\bK'} ,
\end{equation}
which is proper and surjective, and is finite in the generic fibers. 
\end{axiom}

\item We postulate the existence of   a {\it  local model} $\bM^{{\rm loc}}_{K}$ attached to the triple $(G, \{\mu\}, K)$. Let $\CG=\CG_K$ be the group scheme over $\BZ_p$  corresponding to $K$. Then $\bM^{{\rm loc}}_{K}$ is a scheme which is projective and flat over $\Spec O_E$, equipped with an action of $\CG\otimes_{\BZ_p} O_E$, and with generic fiber equal to the partial flag variety associated to $(G, \{\mu\})$. Its formation should be  functorial in the parahoric subgroup $K$, i.e., for $K\subset K'$, there should be a proper and surjective morphism,
\begin{equation}\label{map pKK}
p_{K, K'}\colon \bM^{{\rm loc}}_{K}\to \bM^{{\rm loc}}_{K'} .
\end{equation}

Let $M^{{\rm loc}}_K$ be its special fiber.  Then $M^{{\rm loc}}_K $ is a projective variety over $\kappa_E$, with an action of $\CG_K\otimes_{\BZ_p}\kappa_E$.

\begin{axiom}[Existence of local models]\label{ax locmod}  
 {\it There is a smooth morphism of algebraic stacks  \cite[(7.1)]{R:guide}}
$$
{\boldsymbol \lambda}_K: {\mathbf S}_{\mathbf K} \to [\bM^{{\rm loc}}_K/\mathcal G_{O_E}] ,
$$
compatible with changes in the parahoric subgroup $K$. The action of $\CG_K\otimes_{\BZ_p}\kappa_E$ on $M^{{\rm loc}}_K $ has  finitely many orbits $\CO_w$ which are indexed by $w\in \Adm(\{\mu\})_K$. Furthermore, 
$$\CO_w\subset\ov{\CO}_{w'}\quad\text{ if and only if}\quad  w\leq w'$$
 in the partially ordered set $W_K\backslash\tilde W/W_K$.

{\rm Here $\CG=\CG_K$, and $\CG_{O_E}$ denotes its base change to $\Spec O_E$.}
\end{axiom}

 \begin{remark}
 Pappas and Zhu \cite{P-Z} have constructed such  local models under a tameness assumption on $G$. However, in their set-up, the orbits in $\bM^{{\rm loc}}_K$ are implicitly enumerated by a subset of the Iwahori Weyl group of a {\it loop group} version of $G(\breve \BQ_p)$. Axiom \ref{ax locmod} implicitly refers to Scholze's idea \cite{BS} that would construct local models of Shimura varieties whose special fibers are embedded as  closed subschemes of a \emph{Witt vector affine flag variety}. 

\end{remark}

We use the notation $\lambda_K$  for  the induced morphism of stacks on the special fiber $\lambda_K: Sh_K \to [ M^{{\rm loc}}_K /\CG_{\kappa_E}]$, but also for the map
\begin{equation}\label{def lambda}
\lambda_K: Sh_K \to W_K\backslash\tilde W/W_K ,
\end{equation} 
which associates to a point of $Sh_K$ the orbit of its image in $ [ M^{{\rm loc}}_K /\CG_{\kappa_E}]$.  
For any $w \in W_K\backslash\tilde W/W_K$, set 
\begin{equation}\label{ def KR}
KR_{K, w}=\lambda_K ^{-1}(\CO_w) \subset Sh_K ,
\end{equation} and call it the  {\it Kottwitz-Rapoport stratum} (KR stratum) of $Sh_K$ attached to $w$, cf. \cite[\S 8]{H}. It is a locally closed subvariety of $Sh_K$. Note that, by definition, $KR_{K, w}$ is non-empty  only if $w\in \Adm(\{\mu\})_K$.

\begin{remark}\label{dim KR}
Concerning the dimension of these strata, one may conjecture the following. Let $_K\tilde W^K$ be the set of elements of maximal length among all elements of minimal length in their right coset modulo $W_K$, cf. \cite[Prop. 4.20]{PRS}. Then  $_K\tilde W^K$ maps bijectively to $W_K\backslash\tilde W/W_K$. For $w\in W_K\backslash\tilde W/W_K$, let  $_K w^K\in {_K\tilde W^K}$ be the preimage of $w$ under this bijection.  The  KR stratum $KR_{K, w}$, if non-empty, should be smooth of dimension $\ell( _K w^K)$. 
\end{remark}

\item 
Recall $B(G)=G(\breve \BQ_p)/G(\breve \BQ_p)_\sigma$, the set of $\sigma$-conjugacy classes of $G(\breve \BQ_p)$. 

\begin{axiom} [Existence of a Newton stratification] \label{ax newton}
There is a map  
$$\delta_K: Sh_K\to B(G),
$$ 
compatible with changing the parahoric subgroup $K$ (i.e., with $\pi_{K, K'}$), and  such that for each $[b]\in B(G)$, the fiber   of $\delta_K$ over $[b]$ is the set of $\bar\kappa_E$-rational points of a locally closed subvariety $\mathit S_{K,[b]}$ of  $Sh_K$. Furthermore, if
$$
\mathit S_{K, [b]}\cap\ov{\mathit S}_{K, [b']}\neq\emptyset ,
$$
then $[b]\leq [b']$ in the sense of the partial order on $B(G)$, cf. \eqref{partordB}. 
\end{axiom}

The subvariety $\mathit S_{K, [b]}$ of $Sh_K$ is called  the  {\it Newton stratum }  of $Sh_K$ attached to $[b]$.

\begin{remarks} In the case of a hyperspecial parahoric subgroup $K$, the Newton strata $\mathit S_{K, [b]}$ should have the strong stratification property (the closure of a stratum is a union of strata), and for $[b], [b']\in B(G)$, one should have 
\begin{equation}\label{stratprop KR}
\mathit S_{K, [b]}\subset \ov{\mathit S}_{K, [b']}\quad \text{\it if and only if } \quad [b]\leq [b'] . 
\end{equation}
 Furthermore, the Newton strata $\mathit S_{K, [b]}$ should be equi-dimensional of dimension
\begin{equation}\label{defform}
\dim \mathit S_{K, [b]} = \langle  \mu+\nu([b]), \rho\rangle-\frac{1}{2}{\rm def}([b]) .
\end{equation}
 Here the first summand on the RHS is explained in \eqref{not rho}; for the second summand, comp., e.g., \cite{Ham}. These statements have been proved by Hamacher in the PEL case \cite{Ham}. 

These properties do not extend to general parahoric subgroups. For instance, the strong stratification property fails in the unramified quadratic Hilbert-Blumenthal case for the Iwahori subgroup, cf. \cite{St}. The equi-dimensionality fails for the basic stratum for the Iwahori subgroup in the Siegel case when $g=2$ \cite[Prop. 6.3]{Yu}. 

\end{remarks}

\end{altenumerate}
\subsection{Joint stratification and basic non-emptyness}

Let $\breve K_\sigma \subset \breve K \times \breve K$ be the graph of the Frobenius map $\sigma$ and $G(\breve\BQ_p)/\breve K_\sigma$ be the set of $\breve K$-$\sigma$-conjugacy classes on $G(\breve\BQ_p)$. The embedding $\breve K_\sigma \subset G(\breve\BQ_p)_\sigma$ induces a projection map 
\begin{equation}\label{pr_B}
d_K\colon G(\breve\BQ_p)/\breve K_\sigma \to B(G) .
\end{equation} 
On the other hand, the embedding $\breve K_\sigma \subset \breve K \times \breve K$ induces a map 
\begin{equation}\label{pr_W}
\ell_K\colon G(\breve\BQ_p)/\breve K_\sigma \to \breve K \backslash G(\breve\BQ_p)/\breve K .
\end{equation}
We now add the following axioms to our list.

\smallskip

\begin{altenumerate}
\item The first axiom relates the two maps $ \lambda$ and $\delta$ introduced in Axioms \ref{ax locmod} and \ref{ax newton}. Note that in its formulation, we identify $ \breve K \backslash G(\breve\BQ_p)/\breve K$ with $ W_K \backslash\tilde W/W_K$, cf. \cite[Prop. 8]{HR}. 
\begin{axiom}[Joint stratification]\label{ax Y}
a) There exists a natural map 
$$
\Upsilon_K: Sh_K  \to G(\breve\BQ_p)/\breve K_\sigma ,
$$
 compatible with changes in the parahoric subgroup $K$,  such that the following diagram commutes \[\xymatrix{ & & \breve K \backslash G(\breve\BQ_p)/\breve K \\  Sh_K  \ar[r]^-{\Upsilon_K} \ar@/^1pc/[urr]^{ \lambda_K } \ar@/_1pc/[drr]_{ \delta_K } & G(\breve\BQ_p)/\breve K_\sigma \ar[ur]_{\ell_K} \ar[dr]^{d_K} & \\ & & B(G)}.\]
 {\rm Here the map $\lambda_K$ is the map \eqref{def lambda}.}
 \smallskip
 
\noindent b)  Furthermore,
 $$
 \Im\Upsilon_K=\ell_K^{-1}(\Im  \lambda_K ) .
 $$
 
 \smallskip
 
 \noindent c) For $K\subset K'$, and any element $y\in \Im(\Upsilon_{K})$ with image $y'\in G(\breve\BQ_p)/\breve K'_\sigma$, the natural map
 $$
 {\pi_{K, K'}}_{| \Upsilon_{K}^{-1}(y)}\colon \Upsilon_{K}^{-1}(y)\to \Upsilon_{K'}^{-1}(y')
 $$
 is surjective with finite fibers. 
\end{axiom}
It should be pointed out that parts b) and c) of this axiom are principally  used in connection with the study of EKOR strata in section \ref{s:EKOR}; more precisely, if b) and c) are omitted, the only change outside section \ref{s:EKOR} is that the equality sign  in Corollary \ref{imUps} has to be replaced by an inclusion sign $\subseteq$. 
\begin{remarks}\label{rem ax2}
(1) The images of $\lambda_K$ and of $\delta_K$ are finite, comp. Proposition \ref{incl} below; by  Axiom \ref{ax Y} b), the image of $\Upsilon_K$ is infinite. 

(2) Axiom Axiom \ref{ax Y} c) for $K\subset K'$ follows from  Axiom \ref{ax Y} c) for $I\subset K'$. This follows  from the surjectivity property of $\pi_{I, K'}$ in Axiom \ref{ax funct}. Also, it is clear that  Axiom \ref{ax Y} c)  for $K\subset K'$ and for $K'\subset K''$ implies Axiom \ref{ax Y} c)  for $K\subset K''$.

(3) The fibers of $\Upsilon_K$ are the group-theoretic version of Oort's {\it central leaves} \cite{Oo1}. It seems reasonable to expect the fibers of $\Upsilon_K$ to be closed subsets of the corresponding Newton stratum (this is what Oort proves in the Siegel case when $K$ is hyperspecial). Furthermore, the fibers should be smooth  and equi-dimensional  with dimension given 
$$
\dim \Upsilon_K^{-1}(y)=\langle \nu({d_K(y)}), 2\rho\rangle , 
$$
comp.  \cite{Ham}. The RHS is defined in \eqref{not rho}.  

It may also be conjectured that the morphism in c) above is a finite morphism which is the composition of a radicial morphism and a  finite \'etale morphism. 

 \end{remarks}
\item The second axiom is a weak non-emptiness statement.  Recall from \eqref{def tau} the element $\tau=\tau_{\{\mu\}}$ of length zero in $\tilde W$. 
\begin{axiom}[Basic non-emptiness]\label{ax nonemptytau}
 The map 
$$
KR_{I, \tau} \to \pi_0(Sh_I)
$$ is surjective.  
\end{axiom}
\end{altenumerate}
Here $\pi_0(Sh_K)$ denotes the set of geometric connected components of $Sh_K$. In other words, this axiom postulates that every geometric connected component of $Sh_I$ intersects the KR stratum $KR_{I, \tau}$. 
\begin{remark}\label{rem comp}
In particular, Axiom \ref{ax nonemptytau} states that $KR_{I, \tau}$ is non-empty. The converse can sometimes be proved if a good theory of compactifications exists. Indeed, we would then have a  $G(\BA_f)$-equivariant identification  $\pi_0(Sh_I)=\pi_0({\rm Sh}_{\mathbf K})$, where $\mathbf K=K^p I$. Hence, if $G(\BA_f^p)$ acts transitively on $\varprojlim\nolimits_{K^p}\pi_0({\rm Sh}_{\mathbf K})$,  the non-emptiness of $KR_{I, \tau}$ would imply  Axiom \ref{ax nonemptytau}. 
\end{remark}

The following lemma gives a relation between Axiom \ref{ax Y} b) for a parahoric and a larger parahoric. 
\begin{lemma}\label{prop KK'} We assume Axioms \ref{ax funct} and \ref{ax locmod}. 
Let $K\subset K'$. If $\Im(\Upsilon_K)=\ell_K^{-1}(\Adm(\{\mu\})_K)$, then $\Im(\Upsilon_{K'})=\ell_{K'}^{-1}(\Adm(\{\mu\})_{K'})$.
\end{lemma}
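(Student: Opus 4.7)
The plan is to translate the lemma into a purely group-theoretic identity about $\sigma$-conjugation on $G(\breve\BQ_p)$, and then to reduce that identity — via the Iwahori case — to a combinatorial statement about the $\{\mu\}$-admissible set in $\tilde W$.

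First, by the compatibility of $\Upsilon$ with changes of parahoric in Axiom~\ref{ax Y}(a), the diagram
\[
\Upsilon_{K'}\circ\pi_{K,K'} \;=\; p\circ\Upsilon_K
\]
commutes, where $p\colon G(\breve\BQ_p)/\breve K_\sigma \to G(\breve\BQ_p)/\breve K'_\sigma$ is the projection induced by $\breve K_\sigma \subseteq \breve K'_\sigma$. Since $\pi_{K,K'}$ is surjective by Axiom~\ref{ax funct}, this gives $\Im(\Upsilon_{K'}) = p(\Im(\Upsilon_K))$. By hypothesis, $\Im(\Upsilon_K) = \ell_K^{-1}(\Adm(\{\mu\})_K)$, which equals $\breve K\Adm(\{\mu\})\breve K/\breve K_\sigma$ under the identification in \cite[Prop.~8]{HR}. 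Thus the lemma reduces to the set-theoretic identity
\[
\breve K'_\sigma \cdot \bigl(\breve K\Adm(\{\mu\})\breve K\bigr) \;=\; \breve K'\Adm(\{\mu\})\breve K',
\]
where $\breve K'_\sigma$ acts by $\sigma$-conjugation. The inclusion $\subseteq$ is immediate from $\breve K\subseteq\breve K'$.

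For the reverse inclusion, I would prove the stronger intermediate identity, valid for every parahoric $K\supseteq I$,
\[
\breve K\Adm(\{\mu\})\breve K \;=\; \breve K_\sigma\cdot\bigl(\breve I\Adm(\{\mu\})\breve I\bigr).
\]
Granting this for both $K$ and $K'$, we obtain $\breve K'\Adm(\{\mu\})\breve K' = \breve K'_\sigma\cdot(\breve I\Adm(\{\mu\})\breve I) \subseteq \breve K'_\sigma\cdot(\breve K\Adm(\{\mu\})\breve K)$, which closes the argument.

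The intermediate identity is where the substantive work lies and is the main obstacle of the proof. It is proved by first decomposing
\[
\breve K\Adm(\{\mu\})\breve K \;=\; \bigsqcup_{x\in \Adm(\{\mu\})^K\cap{}^K\tilde W} \breve K_\sigma\cdot(\breve I x\breve I),
\]
exhibiting every element of $\breve K\Adm(\{\mu\})\breve K$ as $\breve K$-$\sigma$-conjugate to an element of some Iwahori double coset $\breve I x\breve I$ with $x$ a minimal-length representative in its left $W_K$-coset lying in the saturation $\Adm(\{\mu\})^K$. One then invokes the combinatorial identity $\Adm(\{\mu\})^K\cap{}^K\tilde W = \Adm(\{\mu\})\cap{}^K\tilde W$ to replace the index set by elements of $\Adm(\{\mu\})$ itself. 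Both ingredients are established in \cite[\S 6.3]{H2}, and they are purely combinatorial statements about $\tilde W$ with its Bruhat order and the $\sigma$-twisted action of $W_K$; no further geometric input beyond Axioms~\ref{ax funct} and~\ref{ax locmod} is needed.
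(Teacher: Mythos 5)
Your proof is correct and takes essentially the same route as the paper's own argument: after the reduction (via compatibility of $\Upsilon$ with changes of parahoric and surjectivity of $\pi_{K,K'}$, which the paper leaves implicit in the phrase ``it suffices''), the paper simply cites \cite[\S 6.3(b)]{H2} for the surjectivity of $(\breve K\Adm(\{\mu\})\breve K)/\breve K_\sigma\to(\breve K'\Adm(\{\mu\})\breve K')/\breve K'_\sigma$, while you unfold that citation into the decomposition from Theorem~\ref{kwk} together with the combinatorial identity $\Adm(\{\mu\})^K\cap{}^K\tilde W=\Adm(\{\mu\})\cap{}^K\tilde W$ of Theorem~\ref{comp}. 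These are precisely the ingredients behind the cited surjectivity, so there is no substantive difference.
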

\begin{proof}  
It suffices to see that the natural map 
$$
\ell_K^{-1}(\Adm(\{\mu\})_K)=\big(\breve K \Adm(\{\mu\}) \breve K\big)/\breve K_\sigma\to \big(\breve K' \Adm(\{\mu\}) \breve K'\big)/\breve K'_\sigma=\ell_{K'}^{-1}(\Adm(\{\mu\})_{K'}) .
$$ is surjective. This follows from \cite[\S 6.3 (b)]{H2}.
\end{proof}

We note some first consequences of these axioms. As a preliminary, we mention the following result (conjectured in \cite{KoR} and \cite{R:guide}). 

\begin{theorem}[{cf. \cite[Theorem A]{H2}}]\label{K-R}
Let $K$ be a parahoric subgroup and $[b] \in B(G)$. Then 
\begin{flalign*}\phantom{\qed} & & [b] \cap \big( \cup_{w \in \Adm(\{\mu\})_K} \breve K w \breve K\big)\neq \emptyset\quad  \text{ if and only if }\quad [b] \in B(G, \{\mu\}) .& & \qed\end{flalign*}
\end{theorem}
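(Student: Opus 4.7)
The plan is to split into the ``only if'' direction (easy) and the ``if'' direction (hard), and to reduce the latter to the Iwahori case followed by a combinatorial study of minimal length elements in $\sigma$-conjugacy classes of $\tilde W$.

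For the ``only if'' direction, suppose $b \in \breve K w \breve K$ for some $w \in \Adm(\{\mu\})_K$. Choose a representative in $\tilde W$ lying in $\Adm(\{\mu\})$. Since by definition every element of $\Adm(\{\mu\})$ is $\leq t^{x(\underline\mu)}$ for some $x \in W_0$, and since $\kappa \colon \tilde W \to \pi_1(G)_{\Gamma_0}$ is constant on Bruhat intervals (it depends only on the length-zero part), one reads off $\kappa([b]) = \mu^\natural$. The bound $\nu([b]) \leq \bar\mu$ is the Mazur-type inequality of Rapoport-Richartz at Iwahori level, which passes to parahoric level via the natural surjection $\breve K \backslash G(\breve\BQ_p)/\breve K \to \breve K \backslash G(\breve\BQ_p)/\breve K$ coming from $\breve I \subset \breve K$.

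For the ``if'' direction, the first step is to reduce to the Iwahori case: one uses the identity
$$ \bigcup_{w \in \Adm(\{\mu\})^K} \breve I w \breve I = \bigcup_{w \in \Adm(\{\mu\})_K} \breve K w \breve K, $$
together with the fact that $\sigma$-conjugation by $\breve K$ preserves each class $[b]$, to replace the statement by the Iwahori version with $\Adm(\{\mu\})$. The central tool for the Iwahori case is the notion of $\sigma$-straight elements: an element $w \in \tilde W$ is $\sigma$-straight when $\ell(w \cdot \sigma(w) \cdots \sigma^{n-1}(w)) = n \ell(w)$ for all $n$, equivalently when its Newton point equals the $\sigma$-average of its translation component. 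Two facts are decisive: (a) every $\sigma$-conjugacy class in $\tilde W$ contains a $\sigma$-straight element, and any two minimal length elements of a class are linked by length-preserving $\sigma$-conjugation steps; (b) for $\sigma$-straight $w$ one has $\breve I \dot w \breve I \subset [\dot w]$, so non-emptiness of the intersection with $[b]$ reduces to producing a $\sigma$-straight element of $\Adm(\{\mu\})$ mapping to $[b]$.

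The hard combinatorial core is then to show that every $[b] \in B(G,\{\mu\})$ is represented by some $\sigma$-straight element of $\Adm(\{\mu\})$. I would argue by induction on the partial order on $B(G,\{\mu\})$: the base case is the unique basic element, represented by the length-zero element $\tau_{\{\mu\}} \in \Adm(\{\mu\})$. For the inductive step, one takes a $\sigma$-straight $w \in \Adm(\{\mu\})$ representing some $[b']$ with $[b'] < [b]$, and applies a partial-conjugation / ``Deligne-Lusztig reduction'' argument: one modifies $w$ by a sequence of simple reflections (in the sense of the $\sigma$-twisted length) to move up the Newton stratification while staying inside $\Adm(\{\mu\})$, using admissibility to control each step via the Bruhat order. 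The main obstacle is precisely to carry out this ascent without leaving $\Adm(\{\mu\})$; the required stability properties of the admissible set under such moves, and the matching of the resulting classes with the full set $B(G,\{\mu\})$ described in \cite{HN2}, constitute the technical heart of \cite{H2}.
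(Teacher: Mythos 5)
The paper offers no proof of this statement: the bracketed citation to \cite[Theorem A]{H2} and the terminal $\qed$ embedded in the displayed formula both signal that it is imported, not proved here. So there is no internal proof to compare against, only the strategy that the paper reads off from \cite{H2} in the surrounding text --- in particular Theorem~\ref{str-bgmu}~c), which the paper explicitly notes is ``closely related'' to Theorem~\ref{K-R}.

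With that caveat, your high-level plan does mirror the structure of \cite{H2}: reduce to the Iwahori case, observe that for $\sigma$-straight $w$ the double coset $\breve I \dot w \breve I$ lies in a single class $[\dot w]$ (Theorem~\ref{str-bgmu}~a)), and thereby reduce everything to the assertion that $\Psi$ restricts to a surjection from straight classes meeting $\Adm(\{\mu\})$ onto $B(G,\{\mu\})$ --- exactly Theorem~\ref{str-bgmu}~c) = \cite[Prop.~4.1]{H2}. One slip along the way: it is \emph{not} true that every $\sigma$-conjugacy class in $\tilde W$ contains a $\sigma$-straight element; the correct statement (Theorem~\ref{str-bgmu}~b)) is only that the straight classes biject with $B(G)$, so the straight class representing a given $[b]$ may be different from the Weyl-group class you started with.

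The genuine gap is in your ``hard combinatorial core.'' Deligne--Lusztig reduction / partial $\sigma$-conjugation, the machinery of \cite{He-07, HN}, is a \emph{length-decreasing} procedure: it moves toward minimal-length elements of a fixed $\sigma$-conjugacy class. Since for a $\sigma$-straight element $\ell(w)=\langle\nu(w),2\rho\rangle$, shrinking length can only move the Newton point down (or leave it fixed), never up. Your proposed ascent from $\tau_{\{\mu\}}$ up the partial order on $B(G,\{\mu\})$, ``moving up the Newton stratification while staying inside $\Adm(\{\mu\})$,'' has no supporting mechanism; you have in effect re-labeled the theorem as the inductive step. The actual argument in \cite{H2} runs the other way: for a given $[b]\in B(G,\{\mu\})$, it starts from an arbitrary $\sigma$-straight representative in $\tilde W$ (furnished by Theorem~\ref{str-bgmu}~b)) and then proves it can be $\sigma$-conjugated into $\Adm(\{\mu\})$. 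That step leans on the explicit description of $B(G,\{\mu\})$ from \cite{HN2} and on a Hodge--Newton/$P$-alcove style reduction to a Levi subgroup --- content that is absent from, not merely summarized in, your sketch.

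Minor: in your ``only if'' direction the surjection should read $\breve I \backslash G(\breve\BQ_p)/\breve I \to \breve K \backslash G(\breve\BQ_p)/\breve K$, not the map from $\breve K \backslash G(\breve\BQ_p)/\breve K$ to itself.
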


Using this theorem, we obtain the following result.

\begin{proposition}\label{incl}
 There are the following inclusions,  
\begin{altenumerate}
\item $\Im(\lambda_K) \subset \Adm(\{\mu\})_K ,$
\item  $\Im(\delta_K) \subset B(G, \{\mu\})$.
\end{altenumerate}
\end{proposition}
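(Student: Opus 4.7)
The plan is to handle (i) essentially by unwinding definitions from the local-model axiom, and then bootstrap from (i) to (ii) using the joint-stratification axiom together with the recent theorem of the first author on $\{\mu\}$-admissibility.

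For part (i), the key point is Axiom \ref{ax locmod}: the $\CG_K \otimes_{\BZ_p} \kappa_E$-orbits on $M^{{\rm loc}}_K$ are \emph{exactly} the $\CO_w$ with $w \in \Adm(\{\mu\})_K$. By definition \eqref{def lambda}, $\lambda_K(x)$ is the index of the unique such orbit containing the image of $x$ in $[M^{{\rm loc}}_K/\CG_{\kappa_E}]$. So the inclusion is immediate from the axiom; no work is required beyond quoting it.

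For part (ii), I would use the commutative diagram in Axiom \ref{ax Y} a). Given $x \in Sh_K$, pick a lift $g \in G(\breve\BQ_p)$ of $\Upsilon_K(x) \in G(\breve\BQ_p)/\breve K_\sigma$. Chasing the diagram, the $\sigma$-conjugacy class $[g] = d_K(\Upsilon_K(x))$ equals $\delta_K(x)$, while the double coset $\breve K g \breve K = \ell_K(\Upsilon_K(x))$ equals $\lambda_K(x)$, which by (i) is indexed by some $w \in \Adm(\{\mu\})_K$. In particular $g \in \breve K w \breve K$, so
\[
\delta_K(x) \cap \bigl( {\textstyle \bigcup_{w \in \Adm(\{\mu\})_K}} \breve K w \breve K \bigr) \neq \emptyset.
\]
Applying Theorem \ref{K-R}, we conclude $\delta_K(x) \in B(G, \{\mu\})$, which gives (ii).

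There is no real obstacle: (i) is a definitional matter and (ii) is a one-line diagram chase feeding into Theorem \ref{K-R}. The only subtlety to note is that the axiomatic Theorem \ref{K-R} (the former Kottwitz--Rapoport conjecture) is doing the actual work for (ii); the role of the joint stratification axiom is only to relate the element of $B(G)$ coming from the geometry of $Sh_K$ to a representative lying in the admissible union of double cosets.
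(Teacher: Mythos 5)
Your proposal is correct and takes essentially the same route as the paper: (i) is read off from Axiom \ref{ax locmod}, and (ii) is obtained by chasing the commutative diagram of Axiom \ref{ax Y}~a) to place a representative of $\delta_K(x)$ in $\bigcup_{w \in \Adm(\{\mu\})_K} \breve K w \breve K$ and then invoking the ``only if'' direction of Theorem \ref{K-R}. The only cosmetic difference is that you explicitly name the lift $g$ and trace $\ell_K$ and $d_K$, whereas the paper phrases the same step as a statement about $\Im(\Upsilon_K)$.
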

\begin{proof}  

Here (i) is just a restatement of the remark right after Axiom \ref{ax locmod}. To see (ii), we note that, by Axiom \ref{ax Y} a),  the image of $\Upsilon_K$ is contained in $\cup_{w \in \Adm(\{\mu\})_K} \breve K w \breve K/\breve K_\sigma$ and hence 
$$\Im(\delta_K)\subset  \big\{[b] \in B(G)\mid [b] \cap (\cup_{w \in \Adm(\{\mu\})_K} \breve K w \breve K)\neq \emptyset\big\}.$$ 
Hence the assertion follows from the ``only if'' direction of Theorem \ref{K-R}. 
\end{proof}

\section{Non-emptiness of  KR strata}\label{s:KR}

In this section we prove the nonemptiness of  KR strata. 

\begin{theorem}\label{nonempty-KR} Let $K$ be a parahoric subgroup and let $X_K$ be a geometric connected component of $Sh_K$. Then
$$\lambda_K(  X_K  )=\Adm(\{\mu\})_K .$$
{\rm  In other words, any geometric connected component of $Sh_K$ intersects any KR stratum  (as their indices run over  their natural range, i.e.,  $\Adm(\{\mu\})_K$). }
\end{theorem}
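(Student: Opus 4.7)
The plan is to reduce to the Iwahori case $K = I$ and then exploit the smoothness of $\lambda_I$ together with the combinatorial fact that $\tau_{\{\mu\}}$ is the Bruhat minimum of $\Adm(\{\mu\})$. Proposition~\ref{incl}(i) already provides the inclusion $\lambda_K(X_K) \subseteq \Adm(\{\mu\})_K$, so only the reverse inclusion needs to be established.

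For the reduction step, let $X_K$ be a geometric connected component of $Sh_K$. Using the surjectivity in Axiom~\ref{ax funct}, pick a point $x \in Sh_I$ with $\pi_{I,K}(x) \in X_K$, and let $X_I$ be the connected component of $Sh_I$ through $x$. Since $X_I$ is connected, $\pi_{I,K}(X_I) \subseteq X_K$. The compatibility with changes in the parahoric that is built into Axiom~\ref{ax locmod} yields the identity $\lambda_K \circ \pi_{I,K} = p_{I,K} \circ \lambda_I$, where $p_{I,K}\colon W_I\backslash\tilde W/W_I \to W_K\backslash\tilde W/W_K$ is the natural surjection. Hence proving $\lambda_I(X_I) = \Adm(\{\mu\})$ will yield $\lambda_K(X_K) \supseteq p_{I,K}(\Adm(\{\mu\})) = \Adm(\{\mu\})_K$, as desired.

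In the Iwahori case, Axiom~\ref{ax nonemptytau} immediately gives $\tau \in \lambda_I(X_I)$. The morphism $\lambda_I\colon Sh_I \to [M^{{\rm loc}}_I/\CG_{\kappa_E}]$ is smooth by Axiom~\ref{ax locmod}, and its restriction to the open subset $X_I \subseteq Sh_I$ remains smooth. Since smooth morphisms of algebraic stacks are open on underlying topological spaces, $\lambda_I(X_I)$ is an open substack, and its pullback to $M^{{\rm loc}}_I$ is a $\CG_{\kappa_E}$-stable open subset, hence of the form $\bigcup_{w \in S}\CO_w$ for some $S \subseteq \Adm(\{\mu\})$. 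Its complement is a closed, $\CG_{\kappa_E}$-stable union of orbits, thus a union of orbit closures; by the closure relations stipulated in Axiom~\ref{ax locmod}, this translates into the statement that $S$ is upward closed in $\Adm(\{\mu\})$ under the Bruhat order.

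The combinatorial input now closes the argument: every $w \in \Adm(\{\mu\})$ has the same image $\mu^\sharp$ in $\pi_1(G)_{\Gamma_0}$ as $\tau$, so $w$ lies in the $\Omega$-coset of $\tau$ in the extended affine Weyl group, and the length-zero element of such a coset is a lower bound for the Bruhat order on the coset. Thus $\tau \leq w$ for every $w \in \Adm(\{\mu\})$, and together with $\tau \in S$ and the upward closedness of $S$ this forces $S = \Adm(\{\mu\})$, i.e., $\lambda_I(X_I) = \Adm(\{\mu\})$; the reduction step then finishes the proof. The substantive content of the theorem is therefore entirely concentrated in Axiom~\ref{ax nonemptytau}\,---\,the openness of images of smooth morphisms of stacks and the minimality of $\tau$ in the admissible set are both standard inputs, so I expect no genuine obstacle in carrying out this plan beyond unwinding the stack-theoretic functoriality of $\lambda_K$ in Axiom~\ref{ax locmod}.
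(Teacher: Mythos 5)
Your proposal is correct and takes essentially the same route as the paper: both reduce to the Iwahori case, use openness of the smooth morphism $\lambda_I$, and exploit the fact that $\tau$ is the unique Bruhat-minimal element of $\Adm(\{\mu\})$ (equivalently, $\CO_\tau$ is the unique closed orbit) to conclude that the open, $\CG$-stable image containing $\CO_\tau$ must be all of $M_I^{\mathrm{loc}}$. The only cosmetic difference is that the paper passes to the full preimage $\pi_{I,K}^{-1}(X_K)$ rather than a single component of it, which is immaterial.
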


\begin{proof}
We first consider the case where $K=I$, the Iwahori subgroup. By Axiom \ref{ax locmod}, $\lambda_I$ is smooth, and hence is open. By Axiom \ref{ax nonemptytau}, $KR_{I, \tau} \cap X_I \neq \emptyset$. Since $\breve I \tau \breve I/\breve I$ is the unique closed $\breve I$-orbit in $\cup_{w \in \Adm(\{\mu\})} \breve I w \breve I/\breve I$, we conclude that $\lambda_I(  X_I )=  M_I^{{\rm loc}} /\breve I$. Hence the assertion  holds for $K=I$.  

Now we consider the  case of a general parahoric. We use the commutative diagram

\[\xymatrix{
 \pi_{I, K}^{-1}(X_K) \ar[r]^-{ \lambda_{I}}\ar[d]^-{\pi_{I, K}} & \Adm(\{\mu\})\ar[d] \\ 
  X_K  \ar[r]^-{ \lambda_K} &  \Adm(\{\mu\})_K.}
\] 
We just proved that $\lambda_I  \vert \pi_{I, K}^{-1}(X_K)$ is surjective. The map $\Adm(\{\mu\})\to\Adm(\{\mu\})_K$ is surjective by definition. Hence the assertion for $K$ follows from the commutativity of the diagram. 
\end{proof}
\begin{corollary}\label{imUps}
The image of $\Upsilon_K$ is given by
$$
\Im \Upsilon_K= \cup_{w \in \Adm(\{\mu\})_{K}} \breve K w \breve K/\breve K_{\sigma} .
$$
\end{corollary}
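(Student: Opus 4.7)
The plan is to derive this as an almost immediate consequence of Theorem \ref{nonempty-KR} combined with part b) of Axiom \ref{ax Y}. The chain of reasoning is short, so I will lay it out in one paragraph and then comment on what, if anything, is subtle.

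First I would observe that Theorem \ref{nonempty-KR} implies
$$
\Im(\lambda_K) = \Adm(\{\mu\})_K,
$$
since the theorem asserts surjectivity of $\lambda_K$ onto $\Adm(\{\mu\})_K$ already on a single geometric connected component, and the opposite inclusion $\Im(\lambda_K) \subset \Adm(\{\mu\})_K$ is Proposition \ref{incl}(i). Next, I would invoke Axiom \ref{ax Y} b), which states
$$
\Im(\Upsilon_K) = \ell_K^{-1}\bigl(\Im(\lambda_K)\bigr),
$$
where here we are using the identification of $\breve K \backslash G(\breve\BQ_p)/\breve K$ with $W_K\backslash \tilde W/W_K$ from \cite[Prop.~8]{HR}, under which $\lambda_K$ on the left is matched with the composition $\ell_K \circ \Upsilon_K$ dictated by the commutative triangle in Axiom \ref{ax Y} a). Combining the two displays yields
$$
\Im(\Upsilon_K) = \ell_K^{-1}\bigl(\Adm(\{\mu\})_K\bigr).
$$

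Finally I would unwind the definition of $\ell_K$ in \eqref{pr_W}: the fiber of $\ell_K$ over the double coset $W_K w W_K \in W_K\backslash\tilde W/W_K$ is exactly $\breve K w \breve K/\breve K_\sigma \subset G(\breve\BQ_p)/\breve K_\sigma$, so that
$$
\ell_K^{-1}\bigl(\Adm(\{\mu\})_K\bigr) \;=\; \bigsqcup_{w\in \Adm(\{\mu\})_K} \breve K w \breve K/\breve K_\sigma,
$$
which is precisely the right-hand side of the corollary.

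There is no real obstacle here; the only mild point that needs to be handled with a sentence of care is the identification $\breve K \backslash G(\breve\BQ_p)/\breve K \cong W_K\backslash\tilde W/W_K$ used to make sense of $\Im(\lambda_K)$ as a subset of the target of $\ell_K$, so that Axiom \ref{ax Y} b) can be applied literally. Once that bookkeeping is in place, the proof is a one-line computation.
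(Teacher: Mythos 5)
Your proof is correct and follows essentially the same route as the paper's (which simply says ``This follows from Theorem~\ref{nonempty-KR} and Axiom~\ref{ax Y}~b)''); you have merely spelled out the bookkeeping about $\ell_K$ and the identification $\breve K\backslash G(\breve\BQ_p)/\breve K\cong W_K\backslash\tilde W/W_K$ that the paper leaves implicit. The only thing the paper adds is a remark noting that one does not need the full strength of Theorem~\ref{nonempty-KR} (hence not the full Axiom~\ref{ax nonemptytau}): it suffices to know $\tau\in\Im(\lambda_I)$, because $\breve I\tau\breve I/\breve I$ is the unique closed orbit, whence $\Im(\lambda_I)=\Adm(\{\mu\})$ and then $\Im(\lambda_K)=\Adm(\{\mu\})_K$ by the surjectivity of $\pi_{I,K}$; but your argument is certainly valid as written.
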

\begin{proof}
This follows from Theorem \ref{nonempty-KR} and Axiom \ref{ax Y} b). \end{proof}
\begin{remark}
In the proof of this corollary, only the weakening $KR_{I, \tau}\neq\emptyset$, i.e., $\tau\in\Im(\lambda_I)$, of Axiom \ref{ax nonemptytau} is used. 

\end{remark}
\section{Newton strata}\label{s: N}

In this section, we study  Newton strata and use the axioms to prove the nonemptiness of Newton strata and  their closure relations. Our approach is based on the relation between certain conjugacy classes in the Iwahori-Weyl group $\tilde W$ and the $\sigma$-conjugacy classes of $G(\breve \BQ_p)$. 

\subsection{$\sigma$-straight elements} 
Note that $\tilde W$ is equipped with a natural action induced from $\sigma$. We regard $\sigma$ as an element in the group $\tilde W \rtimes \langle \sigma \rangle$. The length function on $\tilde W$ extends in a natural way to a length function on $\tilde W \rtimes \langle \sigma \rangle$ by requiring $\ell(\sigma)=0$. 

For any $w \in \tilde W$, we choose a representative in $N(\breve \BQ_p)$ and still denote it by $w$. The restriction of the Newton map $\nu$ to $\tilde W$ can be described explicitly as follows.

Recall that $\tilde W=X_*(T)_{\Gamma_0} \rtimes W_0$.  For any $w \in \tilde W$, there exists $n \ge 1$ such that $\sigma^n$ acts trivially on $\tilde W$ and that $\lambda=(w \sigma)^n=w \sigma(w) \cdots \sigma^{n-1}(w) \in X_*(T)_{\Gamma_0}$. The element $\frac{1}{n}\lambda \in (X_*(T)_\BQ)^{\Gamma_0}$ is independent of the choice of $n$. Then  $\nu(w)$ is the unique dominant element in the $W_0$-orbit of $\frac{1}{n}\lambda$. Note that $\nu(w)$ is independent of the choice of representative of $w$ in $N(\breve \BQ_p)$ and is constant on $\sigma$-conjugacy classes in $\tilde W$. 

\smallskip

Now we recall the definition of $\sigma$-straight elements and $\sigma$-straight conjugacy classes \cite{HN}. 
An element $w \in \tilde W$ is called {\it $\sigma$-straight} if $\ell(w \sigma(w) \sigma^2(w) \cdots \sigma^{m-1}(w))=m \ell(w)$ for all $m \in \BN $, i.e., $\ell((w \sigma)^m)=m \ell(w)$ for all $m \in \BN$. By \cite[2.4]{H1}, $w$ is $\sigma$-straight if and only if $\ell(w)=\langle\nu(w), 2 \rho\rangle$. For the last notation, comp. \eqref{not rho}. We call a $\sigma$-conjugacy class of $\tilde W$ $\sigma$-straight if it contains a $\sigma$-straight element,  and denote by  $B(\tilde W)_{\sstr}$  the set of $\sigma$-straight $\sigma$-conjugacy classes of $\tilde W$.

\smallskip

We have the following results on the relation between $\sigma$-straight elements in $\tilde W$ and $B(G)$. 

\begin{theorem}{\rm  a) ({cf. \cite[Theorem 3.7]{H1}})}\label{str-bgmu}
For any $\sigma$-straight element $w$, $\breve I w \breve I$ is contained in a single $\sigma$-conjugacy class of $G(\breve \BQ_p)$.

\smallskip

{\rm b) ({cf. \cite[Theorem 3.3]{H1}})}
The map $$\Psi: B(\tilde W)_{\sstr} \to B(G)$$ induced by the inclusion $N(T)(\breve \BQ_p) \subset G(\breve \BQ_p)$ is bijective.

\smallskip

 {\rm c) ({cf. \cite[Proposition 4.1]{H2}})}
Let $\Adm(\{\mu\})_{\sstr}$ be the set of $\sigma$-straight elements in the admissible set $\Adm(\{\mu\})$. Then   $\Psi$ maps  the image of $\Adm(\{\mu\})_{\sstr}$ in $B(\tilde W)_{\sstr}$ bijectively  to $ B(G, \{\mu\}) .$ \qed
\end{theorem}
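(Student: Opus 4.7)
The strategy is to prove the three parts in order, since (b) follows formally from (a) together with a surjectivity argument for the minimum-length representative, and (c) then becomes the combinatorial identification of which $\sigma$-straight classes land in $\Adm(\{\mu\})$. The entire technical heart of the theorem sits in (a); the rest is bookkeeping.

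For part (a), the claim is that for $\sigma$-straight $w$, every element of $\breve I w \breve I$ is $\sigma$-conjugate to $w$. Given $g = i_1 w i_2 \in \breve I w \breve I$, first $\sigma$-conjugate by $i_1^{-1}$ to reduce to $g = w i$ for some $i \in \breve I$. Using the affine Iwahori decomposition of $\breve I$ into affine root subgroups, I decompose $i$ according to a depth filtration compatible with the action of $w\sigma$. The defining identity $\ell((w\sigma)^m) = m \ell(w)$ for all $m \geq 1$ guarantees that the positive affine roots visited under iteration of $w\sigma$ never ``return'' to cancel a positive root in the base. Hence for each nontrivial root component of $i$, I can produce a conjugator supported in a strictly smaller piece of the filtration which eliminates that component without introducing new components of equal or greater depth. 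The process converges in the pro-$p$ topology on $\breve I$ and yields $j \in \breve I$ with $j^{-1} (w i) \sigma(j) = w$. This is a Geck--Pfeiffer / Deligne--Lusztig style reduction, and setting up the filtration so that the elimination is strictly decreasing is the main obstacle of the whole theorem.

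Part (b) is then essentially formal. For surjectivity, every $\sigma$-conjugacy class of $G(\breve\BQ_p)$ has a representative in $N(T)(\breve\BQ_p)$ by the affine Bruhat--Tits decomposition, and among all $\tilde W$-representatives of a given $\sigma$-conjugacy class in $\tilde W$, one picks one of minimal length. The key auxiliary fact is a cyclic-shift lemma in $\tilde W$: if $w$ is not $\sigma$-straight, one can $\sigma$-conjugate by a simple reflection to strictly decrease length, so that minimality forces $\ell(w) = \langle \nu(w), 2\rho\rangle$, i.e. $\sigma$-straightness. For injectivity, suppose $w, w'$ are $\sigma$-straight with $\Psi([w]) = \Psi([w'])$; then $w'$ is $\sigma$-conjugate in $G(\breve\BQ_p)$ to some element of $\breve I w \breve I$, hence by (a) to $w$ itself. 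Tracing the conjugator through the Iwahori decomposition then shows it can be chosen in $N(T)(\breve\BQ_p)$, giving $\sigma$-conjugacy already in $\tilde W$.

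For part (c), I verify both containments of the image of $\Adm(\{\mu\})_{\sstr}$ in $B(G)$ with $B(G,\{\mu\})$. Given $w \in \Adm(\{\mu\})_{\sstr}$, the Bruhat inequality $w \leq t^{x(\underline\mu)}$ forces $w$ and $t^{x(\underline\mu)}$ to have the same image in $\pi_1(G)_{\Gamma_0}$ (the Bruhat order respects the length-zero part), so $\kappa(\Psi([w])) = \mu^\natural$; combined with the affine Mazur inequality $\nu(w) \leq \overline\mu$, which is a standard consequence of membership in $\Adm(\{\mu\})$, this shows $\Psi([w]) \in B(G,\{\mu\})$. Conversely, given $[b] \in B(G,\{\mu\})$, let $w$ be the $\sigma$-straight representative provided by (b); its Kottwitz and Newton invariants are $\mu^\natural$ and $\nu([b]) \leq \overline\mu$. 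One then constructs an explicit $\sigma$-conjugate of $w$ of the form $t^\lambda \tau_{\{\mu\}}$, with $\lambda$ a suitable cocharacter in the convex hull of the $W_0$-orbit of $\underline\mu$, to verify membership in $\Adm(\{\mu\})$. The subtle point is the explicit matching between $\sigma$-straight conjugacy classes in $\tilde W$ and the combinatorial datum defining $\Adm(\{\mu\})$, which is where one most directly uses the fact that all $\sigma$-straight classes in $\Adm(\{\mu\})$ share the common length-zero component $\tau_{\{\mu\}}$.
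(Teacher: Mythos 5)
The paper itself gives no argument here: all three parts are cited from \cite[Thm.\ 3.7]{H1}, \cite[Thm.\ 3.3]{H1}, and \cite[Prop.\ 4.1]{H2} respectively, and the theorem is followed immediately by \qed. So the comparison has to be against the cited proofs. Your sketch gets the architecture right --- part (a) carries the technical weight, (b) reduces to minimal-length representatives, (c) is combinatorics about $\Adm(\{\mu\})$ --- and your treatment of (a) by successive elimination of affine root-group components using length-additivity $\ell((w\sigma)^m)=m\ell(w)$ is in the right spirit. But two steps do not hold up.

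For the injectivity in (b), the argument ``$w'$ is $\sigma$-conjugate in $G(\breve\BQ_p)$ to $w$, and tracing the conjugator through the Iwahori decomposition shows it can be chosen in $N(T)(\breve\BQ_p)$'' is not a proof: $\sigma$-conjugacy in the big group gives you no control on where the conjugating element lives, and there is no Iwahori-type decomposition lemma that lets you replace an arbitrary conjugator by one in $N(T)$. The actual argument is quite different. One shows that a $\sigma$-straight $\sigma$-conjugacy class in $\tilde W$ is determined by its pair of invariants $(\kappa,\nu)$ (this is the He--Nie combinatorics underlying \cite{HN}), that $B(G)$ is likewise determined by $(\kappa,\nu)$ (Kottwitz), and that $\Psi$ is compatible with both invariants; injectivity then falls out formally. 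Without that structural input the injectivity claim is a gap.

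For (c), your first containment is fine (the $\kappa$-compatibility plus the Mazur-type inequality $\nu(w)\le\ov\mu$ for $w\in\Adm(\{\mu\})$ does put the image inside $B(G,\{\mu\})$). But the surjectivity direction is dramatically underestimated. The assertion that for each $[b]\in B(G,\{\mu\})$ one can ``construct an explicit $\sigma$-conjugate of $w$ of the form $t^\lambda\tau_{\{\mu\}}$'' landing in $\Adm(\{\mu\})$ is not correct as stated --- the $\sigma$-straight elements of $\Adm(\{\mu\})$ are not in general $\sigma$-conjugate to such translations, and more importantly producing \emph{any} $\sigma$-straight representative of $[b]$ inside $\Adm(\{\mu\})$ is precisely the ``if'' direction of the Kottwitz--Rapoport conjecture, stated in this paper as Theorem \ref{K-R}. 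Its proof in \cite{H2} is a substantial induction on semisimple rank, passing through the superbasic case, a reduction to Levi subgroups, and a comparison with the quasi-split inner form; it is not a direct construction. Treating this as ``bookkeeping'' misrepresents where the difficulty lies.
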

\begin{remark}
Note that the point c) is closely related to Theorem \ref{K-R}. 
\end{remark}

\subsection{Non-emptiness}
As an application of Theorem \ref{str-bgmu}, a), we obtain the following fact. 
\begin{proposition}\label{KR-N}
Let $w$ be a $\sigma$-straight element, then $KR_{I, w} \subset \mathit S_{I, [w]}$.
 \qed
\end{proposition}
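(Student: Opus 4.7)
The plan is to chase the commutative diagram in Axiom~\ref{ax Y}(a) and then apply the key rigidity property from Theorem~\ref{str-bgmu}(a). Concretely, fix a geometric point $x \in KR_{I,w}$. By the definition \eqref{def lambda}--\eqref{ def KR} of the KR stratum, $\lambda_I(x)$ is the double coset (a single coset, since $K=I$) associated to $w$; equivalently, via the identification $\breve I\backslash G(\breve\BQ_p)/\breve I \cong \tilde W$, we have $\lambda_I(x) = w$.

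Next, I would invoke Axiom~\ref{ax Y}(a) applied to $K=I$: the triangle gives $\lambda_I = \ell_I \circ \Upsilon_I$, so $\Upsilon_I(x)$ is a $\breve I_\sigma$-orbit lying over $w$ under $\ell_I$, i.e.\ $\Upsilon_I(x) \in (\breve I w \breve I)/\breve I_\sigma$. Choose any lift $g \in \breve I w \breve I \subset G(\breve\BQ_p)$ of $\Upsilon_I(x)$. The other triangle of the diagram gives $\delta_I(x) = d_I(\Upsilon_I(x)) = [g] \in B(G)$, where $[g]$ denotes the $\sigma$-conjugacy class of $g$.

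Now the hypothesis that $w$ is $\sigma$-straight is used exactly once, through Theorem~\ref{str-bgmu}(a): the whole double coset $\breve I w \breve I$ lies in a single $\sigma$-conjugacy class of $G(\breve\BQ_p)$, which is by definition the class $[w]$ attached to $w$ via the map $\Psi$ of Theorem~\ref{str-bgmu}(b). Therefore $[g] = [w]$, which gives $\delta_I(x) = [w]$, i.e.\ $x \in \mathit S_{I,[w]}$. Since $x$ was arbitrary, this proves $KR_{I,w} \subset \mathit S_{I,[w]}$.

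There is no real obstacle here: once Axiom~\ref{ax Y}(a) provides the joint map $\Upsilon_I$ and Theorem~\ref{str-bgmu}(a) collapses the $\sigma$-conjugacy classes of elements of $\breve I w \breve I$ to a single class, the inclusion is an immediate diagram chase. The only point that would require a line of care in writing out is the identification $\breve I\backslash G(\breve\BQ_p)/\breve I \cong \tilde W$ used to read $\ell_I^{-1}(w)$ as $(\breve I w\breve I)/\breve I_\sigma$, but this is the standard Iwahori--Bruhat decomposition already invoked before Axiom~\ref{ax Y}.
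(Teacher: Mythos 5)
Your argument is correct and is exactly the diagram chase the paper has in mind: the authors omit the proof (ending the Proposition with \qed) but preface it with ``As an application of Theorem~\ref{str-bgmu}, a), we obtain the following fact,'' and the mechanism is precisely Axiom~\ref{ax Y}(a) collapsing $\lambda_I$ and $\delta_I$ through $\Upsilon_I$, combined with Theorem~\ref{str-bgmu}(a) placing $\breve I w \breve I$ in a single $\sigma$-conjugacy class.
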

\noindent Here $[w]$ denotes the $\sigma$-conjugacy class of $w$, resp. its image under $\Psi$. 
\smallskip

Now we prove the non-emptiness of Newton strata. 
\begin{theorem}\label{nonempty-N} Let $K$ be a parahoric subgroup and let $X_K$ be a geometric connected component of $Sh_K$. Then
  $$\delta_K(  X_K  )=B(G, \{\mu\}).$$
{ \rm In other words, any geometric connected component of $Sh_K$ intersects any Newton stratum (as their indices run over  their natural range, i.e.,   $B(G, \{\mu\})$. }
\end{theorem}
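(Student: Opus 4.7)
The plan is to first establish the theorem for $K = I$ (the Iwahori case) and then bootstrap to arbitrary parahoric subgroups via the projection $\pi_{I,K}$. The key input is Theorem \ref{str-bgmu} c), which lets us realize every $[b] \in B(G, \{\mu\})$ as $\Psi([w])$ for some $\sigma$-straight $w \in \Adm(\{\mu\})$, and Proposition \ref{KR-N}, which tells us that the KR stratum of such a $w$ lands in a single Newton stratum.

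First, I would handle $K = I$. Fix a geometric connected component $X_I \subset Sh_I$ and let $[b] \in B(G, \{\mu\})$. By Theorem \ref{str-bgmu} c), pick a $\sigma$-straight element $w \in \Adm(\{\mu\})$ with $\Psi([w]) = [b]$. By Theorem \ref{nonempty-KR} applied at the Iwahori level, $\lambda_I(X_I) = \Adm(\{\mu\})_I = \Adm(\{\mu\})$, so the intersection $KR_{I, w} \cap X_I$ is non-empty. Proposition \ref{KR-N} then gives $KR_{I, w} \cap X_I \subseteq \mathit{S}_{I, [b]} \cap X_I$, so $[b] \in \delta_I(X_I)$. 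Together with the inclusion $\delta_I(X_I) \subseteq B(G, \{\mu\})$ coming from Proposition \ref{incl} (ii), this yields the theorem for $K = I$.

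Next, for a general parahoric $K \supseteq I$, I would exploit $\pi_{I, K} \colon Sh_I \to Sh_K$. This map is proper and surjective by Axiom \ref{ax funct}, so for any geometric connected component $X_K$ of $Sh_K$, the preimage $\pi_{I, K}^{-1}(X_K)$ is a non-empty open and closed subset of $Sh_I$, hence a disjoint union of geometric connected components $X_{I, \alpha}$ of $Sh_I$. The compatibility of the Newton map with parahoric change (Axiom \ref{ax newton}) gives $\delta_I = \delta_K \circ \pi_{I, K}$ on $\pi_{I,K}^{-1}(X_K)$, so
\begin{equation*}
\delta_K(X_K) \;\supseteq\; \delta_K\bigl(\pi_{I, K}(\pi_{I, K}^{-1}(X_K))\bigr) \;=\; \delta_I(\pi_{I, K}^{-1}(X_K)) \;=\; \bigcup_\alpha \delta_I(X_{I, \alpha}) \;=\; B(G, \{\mu\}),
\end{equation*}
where the last equality uses the Iwahori case just established. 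Combined once again with Proposition \ref{incl} (ii), we conclude $\delta_K(X_K) = B(G, \{\mu\})$.

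The main conceptual obstacle is already encapsulated in the cited Theorem \ref{str-bgmu}: the bijection between $\sigma$-straight conjugacy classes in $\tilde W$ (represented inside $\Adm(\{\mu\})$) and $B(G, \{\mu\})$. Once this is granted, the axiomatic framework (smoothness of $\lambda_I$ yielding surjectivity on each connected component in Theorem \ref{nonempty-KR}, and properness plus compatibility for $\pi_{I, K}$) makes the argument essentially formal. No delicate algebraic geometry is needed beyond what is packaged in the axioms.
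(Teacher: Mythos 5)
Your proof is correct and follows essentially the same route as the paper's: reduce to the Iwahori case via the compatibility $\delta_I = \delta_K \circ \pi_{I,K}$ and surjectivity of $\pi_{I,K}$, and in the Iwahori case combine Theorem~\ref{str-bgmu}~c), Proposition~\ref{KR-N}, Theorem~\ref{nonempty-KR}, and Proposition~\ref{incl}~(ii). Your treatment is slightly more explicit about $\pi_{I,K}^{-1}(X_K)$ being a union of connected components, but the argument is the same.
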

\begin{proof} The inclusion $\delta_K(  X_K  )\subset B(G, \{\mu\})$ is the content of Proposition \ref{incl}, (ii). 

Now let $[b] \in B(G, \{\mu\})$. By Theorem \ref{str-bgmu} c), there exists a $\sigma$-straight element $w \in \Adm(\{\mu\})$ such that $w \in [b]$. By Proposition \ref{KR-N}, $KR_{I, w} \subset \mathit S_{I, [b]}$. By Theorem \ref{nonempty-KR}, $KR_{I, w} \cap X_I \neq \emptyset$. Hence $\mathit S_{I, [b]} \cap X_I \neq \emptyset$. Therefore the  assertion holds for $K=I$. 

The case of a general parahoric $K$ follows  from the commutative diagram
 
 \begin{equation*}
 \begin{xy}
 \xymatrix@C-1em{
  \pi_{I, K}^{-1}(X_K) \ar[dd]^{\pi_{I, K}}\ar[rd]^{\delta_I} &\\         & B(G) . \\
				    X_K   \ar[ur]_{\delta_K}  &           		                            
}
\end{xy}
\end{equation*}

\end{proof}

\subsection{Closure relation} 

We recall the partial order on $B(\tilde W)_{\sstr}$ introduced in \cite[\S 3.2]{H2}. For $\mathcal O, \mathcal O' \in B(\tilde W)_{\sstr}$, we say that $\mathcal O' \preceq \mathcal O$ if for some (or equivalently\footnote{This equivalence follows from a remarkable property of the $\sigma$-straight conjugacy classes (see \cite[Theorem 3.8]{HN}); the transitivity of the partial order is deduced from this equivalence.}, any) $\sigma$-straight element $w \in \mathcal O$, there exists a $\sigma$-straight element $w' \in \mathcal O'$ such that $w' \le w$ (the Bruhat order on $\tilde W)$. 

The natural bijection in Theorem \ref{str-bgmu} b) is in fact a bijection of posets in the following sense. 

\begin{theorem}[{cf. \cite[Theorem B]{H2}}]\label{posets}
Let $\mathcal O,  \mathcal O' \in B(\tilde W)_{\sstr}$. Then $\mathcal O' \preceq \mathcal O$ if and only if $\Psi(\mathcal O') \le \Psi(\mathcal O)$. \qed
\end{theorem}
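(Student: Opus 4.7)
The plan is to prove the two implications separately; the forward direction is essentially a closure statement in the loop group, while the backward direction is combinatorial and will be the main difficulty.

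\textbf{Forward direction} ($\mathcal{O}' \preceq \mathcal{O} \Rightarrow \Psi(\mathcal{O}') \le \Psi(\mathcal{O})$). Pick $\sigma$-straight representatives $w \in \mathcal{O}$ and $w' \in \mathcal{O}'$ with $w' \le w$ in the Bruhat order. The Bruhat inequality translates into the closure relation
\[
\breve I w' \breve I \subset \overline{\breve I w \breve I}
\]
inside $G(\breve \BQ_p)$ (viewed through the affine flag variety $G(\breve \BQ_p)/\breve I$). By Theorem \ref{str-bgmu}~a), $\breve I w \breve I$ sits in the single $\sigma$-conjugacy class $\Psi(\mathcal{O})$, and similarly $\breve I w' \breve I \subset \Psi(\mathcal{O}')$. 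Hence $\Psi(\mathcal{O}')$ meets the closure of $\Psi(\mathcal{O})$. The Rapoport--Richartz semicontinuity theorem for the Newton and Kottwitz invariants on $G(\breve\BQ_p)$ then gives $\kappa(\Psi(\mathcal{O}'))=\kappa(\Psi(\mathcal{O}))$ and $\nu(\Psi(\mathcal{O}')) \le \nu(\Psi(\mathcal{O}))$ in the dominance order, i.e., $\Psi(\mathcal{O}') \le \Psi(\mathcal{O})$ in $B(G)$.

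\textbf{Backward direction} ($\Psi(\mathcal{O}') \le \Psi(\mathcal{O}) \Rightarrow \mathcal{O}' \preceq \mathcal{O}$). Fix a $\sigma$-straight $w \in \mathcal{O}$; we must produce a $\sigma$-straight $w' \in \mathcal{O}'$ with $w' \le w$. My approach is an induction along covering relations in the partial order on $B(G)$. Every relation $\Psi(\mathcal{O}') \le \Psi(\mathcal{O})$ can be refined to a chain of covers; each cover corresponds to subtracting a (Galois-averaged) positive relative coroot $\alpha^\vee$ from the Newton vector while preserving the Kottwitz invariant. The length identity $\ell(w) = \langle \nu(w), 2\rho\rangle$ for $\sigma$-straight elements shows that a cover drops the length by exactly $\langle \alpha^\vee, 2\rho\rangle$. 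Thus for the one-step case, the task reduces to finding an affine reflection $s$ such that $sw$ (or a suitable $\sigma$-conjugate of it by a simple reflection) is again $\sigma$-straight, has $\nu(sw) = \nu(w) - \overline{\alpha^\vee}$, and satisfies $sw < w$. The existence of such $s$ should follow from the combinatorics of the affine root system of $\tilde W$: the $\sigma$-straightness of $w$ forces $w$ to send many positive affine roots to negative ones, and at least one of them can be aligned with the distinguished coroot $\alpha^\vee$ that appears in the cover. Transitivity of the Bruhat order propagates the inequalities along the chain of covers.

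\textbf{Main obstacle.} The main difficulty is the one-step construction in the backward direction: given an arbitrary $\sigma$-straight $w\in\mathcal{O}$ and a covering relation $\Psi(\mathcal{O}')\lessdot \Psi(\mathcal{O})$ in $B(G)$, realize the drop by an explicit reflection that keeps the straightness property intact. Standard Bruhat combinatorics only guarantee $\ell(sw)=\ell(w)-1$ for a well-chosen $s$, whereas here we need the length to drop by $\langle\alpha^\vee,2\rho\rangle$, which is typically much larger, and we simultaneously need $sw$ to remain at the minimum possible length in its $\sigma$-conjugacy class. One would handle this by first replacing $w$ with a cyclic-shift-equivalent $\sigma$-straight representative chosen so that a long positive coroot (conjugate to $\alpha^\vee$) is visible as a sequence of reduced positive affine roots inverted by $w$; the desired element $w'$ is then obtained by excising this sequence. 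The flexibility to change the representative inside $\mathcal{O}$ relies on the equivalence ``for some $\sigma$-straight $w\in\mathcal{O}$'' $\iff$ ``for any $\sigma$-straight $w\in\mathcal{O}$'' in the definition of $\preceq$, which is the content of the footnote and itself rests on the remarkable structure of $\sigma$-straight conjugacy classes from \cite{HN}. Once the covering case is done, the inductive step and transitivity finish the proof.
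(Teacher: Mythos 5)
The paper does not prove this statement: it is stated with a citation to \cite[Theorem B]{H2} and an immediate \verb|\qed|, so there is no in-text argument to compare your proposal against. With that caveat, here is an assessment of your sketch as a freestanding proof.

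Your \emph{forward} direction is morally right and is essentially the standard geometric argument. The one point that needs to be said carefully is the framework in which ``$\breve I w'\breve I$ lies in the closure of $\breve I w\breve I$'' and ``the Newton point is upper semicontinuous'' both make sense simultaneously: one works in the Witt vector affine flag variety (or its loop-group analogue), where Schubert closure $=$ union of smaller Schubert cells and Rapoport--Richartz type semicontinuity applies. You should also note explicitly that $w'\le w$ forces $w$ and $w'$ to lie in the same coset of the affine Weyl group, which is what gives $\kappa(\Psi(\mathcal O'))=\kappa(\Psi(\mathcal O))$; semicontinuity then gives $\nu\le\nu$. Granting the geometric input, this half is fine.

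Your \emph{backward} direction has a genuine gap, and it is precisely where you flag the ``main obstacle.'' Two specific problems: (1) The reduction to covering relations in $B(G)$ with fixed $\kappa$-invariant requires knowing that covers are given by subtracting a single Galois-averaged positive relative coroot from the Newton vector, subject to integrality of the result. This is a nontrivial structural fact about $B(G)$ (it is essentially Chai's ranked-poset theorem for $B(G,\mu)$), and in the ramified/non-quasi-split case you would have to verify it or cite it; it is not immediate. (2) Even assuming the cover structure, your one-step construction is not a construction. You first propose an affine reflection $s$ making $sw$ again $\sigma$-straight with the right Newton drop, and then you correctly observe this cannot work since the length must drop by $\langle \overline{\alpha^\vee},2\rho\rangle\gg 1$. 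Your fallback --- choose a cyclic-shift-equivalent straight representative so that a ``long positive coroot is visible as a sequence of inverted positive affine roots'' and then ``excise'' that sequence --- is not a well-defined operation on reduced words, gives no reason why the excised element should be $\sigma$-straight, and gives no reason why it should lie in the intended $\sigma$-conjugacy class $\mathcal O'$ rather than some other class with the same Newton point. The hard part of \cite[Theorem B]{H2} is exactly an honest construction of such a $w'$; the actual argument there is a careful combinatorial analysis (using the properties of $\sigma$-straight classes from \cite{HN}, reduction to convenient representatives, and explicit control of the Bruhat order) rather than a local excision. As it stands, your sketch identifies the difficulty but does not resolve it.
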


\smallskip

Now we prove the following closure relation between Newton strata (this kind of statement is sometimes referred to as {\it  Grothendieck's conjecture}).

\begin{theorem}\label{grothconj}
Let $K$ be a parahoric subgroup. Let $[b], [b'] \in B(G, \{\mu\})$. Then  $\overline{\mathit S}_{K,[b']} \cap \mathit S_{K, [b]} \neq \emptyset$ if and only if $[b] \le [b']$. 
\end{theorem}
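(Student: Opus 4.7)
The ``only if'' direction is immediate from the closure statement in Axiom \ref{ax newton}, so the substance is the converse. I plan to prove $\overline{\mathit S}_{K,[b']} \cap \mathit S_{K,[b]} \neq \emptyset$ assuming $[b] \le [b']$ in $B(G,\{\mu\})$. The strategy is to first settle the case $K = I$ (Iwahori), and then transport the result to a general parahoric $K$ using the proper surjective map $\pi_{I,K}$ from Axiom \ref{ax funct}. For the descent: the compatibility of $\delta$ with changes of parahoric (Axiom \ref{ax newton}) gives $\pi_{I,K}^{-1}(\mathit S_{K,[b]}) = \mathit S_{I,[b]}$, and combining surjectivity with properness of $\pi_{I,K}$ yields $\pi_{I,K}(\mathit S_{I,[b]}) = \mathit S_{K,[b]}$ and $\pi_{I,K}(\overline{\mathit S}_{I,[b']}) = \overline{\mathit S}_{K,[b']}$; so any point of $\mathit S_{I,[b]} \cap \overline{\mathit S}_{I,[b']}$ maps to the desired intersection downstairs.

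For the core case $K = I$, the plan is to exhibit a pair of $\sigma$-straight elements in $\Adm(\{\mu\})$ that are comparable in the Bruhat order and represent $[b]$ and $[b']$. Using Theorem \ref{str-bgmu}(c), pick a $\sigma$-straight $w' \in \Adm(\{\mu\})$ with $\Psi([w']) = [b']$. Theorem \ref{posets} converts the hypothesis $[b] \le [b']$ into the relation $\mathcal O \preceq \mathcal O'$ between the associated $\sigma$-straight $\sigma$-conjugacy classes, so there exists a $\sigma$-straight $v \le w'$ with $\Psi([v]) = [b]$. Because $\Adm(\{\mu\})$ is downward closed for the Bruhat order (by definition) and $w' \in \Adm(\{\mu\})$, we automatically have $v \in \Adm(\{\mu\})$.

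It remains to produce a point of $\mathit S_{I,[b]} \cap \overline{\mathit S}_{I,[b']}$ inside $KR_{I,v}$. Theorem \ref{nonempty-KR} gives $KR_{I,v} \neq \emptyset$, and Proposition \ref{KR-N} shows $KR_{I,v} \subset \mathit S_{I,[b]}$ and $KR_{I,w'} \subset \mathit S_{I,[b']}$. The closure relation among KR strata follows from the orbit part of Axiom \ref{ax locmod}: $v \le w'$ implies $\mathcal O_v \subset \overline{\mathcal O_{w'}}$ in $M_I^{\mathrm{loc}}$. Since $\lambda_I$ is smooth, hence open, the elementary fact $\lambda_I^{-1}(\overline{Z}) \subset \overline{\lambda_I^{-1}(Z)}$ applies and yields $KR_{I,v} \subset \overline{KR_{I,w'}} \subset \overline{\mathit S}_{I,[b']}$. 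Any point of $KR_{I,v}$ then lies in the desired intersection, finishing the Iwahori case.

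The only nontrivial step is the passage from the Newton order on $B(G)$ to the Bruhat order on $\sigma$-straight elements; this is precisely the content of Theorem \ref{posets}, and without it one has no way to produce a comparable pair of representatives. The other ingredients (downward-closedness of $\Adm(\{\mu\})$, openness of $\lambda_I$, properness of $\pi_{I,K}$) are routine topological propagation.
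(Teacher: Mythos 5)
Your proof is correct and follows essentially the same line as the paper: reduce to $K=I$ via $\pi_{I,K}$, choose a $\sigma$-straight $w'\in\Adm(\{\mu\})$ representing $[b']$ using Theorem \ref{str-bgmu}(c), use Theorem \ref{posets} to produce a $\sigma$-straight $v\le w'$ representing $[b]$, and conclude via Proposition \ref{KR-N}, the non-emptiness of $KR_{I,v}$ from Theorem \ref{nonempty-KR}, and the closure relation among KR strata. Two small remarks: first, you correctly take the $\sigma$-straight representative first in $[b']$ and then find the smaller one in $[b]$ — the paper's printed proof as written appears to swap the roles of $[b]$ and $[b']$ at this step (choosing $w\in[b]$ and then $w'\le w$ with $w'\in[b']$, which by Theorem \ref{posets} would need $[b']\le[b]$), so your ordering is the one that actually establishes $KR_{I,v}\subset\mathit S_{I,[b]}\cap\overline{\mathit S}_{I,[b']}$; second, your explicit justification that $\lambda_I^{-1}(\overline{\CO_{w'}})\subset\overline{\lambda_I^{-1}(\CO_{w'})}$ via openness of the smooth map $\lambda_I$ supplies a detail the paper only asserts when writing $\overline{KR}_{I,w}=\sqcup_{w'\le w}KR_{I,w'}$.
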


\begin{proof}
The ``only if'' direction is part of Axiom \ref{ax newton}. Now we prove the ``if'' direction.  Using the properness of $\pi_{I, K}$, it suffices to consider the case $K=I$. 

By Theorem \ref{str-bgmu} c), there exists a $\sigma$-straight element $w \in \Adm(\{\mu\})$ such that $w \in [b]$. By Corollary \ref{KR-N}, $KR_{I, w} \subset \mathit S_{I, [b]}$. Then $\overline{\mathit S}_{I, [b]}\supset \overline{KR}_{I, w}=\sqcup_{w' \le w} KR_{I, w'}$. By Theorem \ref{posets}, there exists a $\sigma$-straight element $w'$ such that $w' \in [b']$ and $w' \le w$. This finishes the proof. 
\end{proof}
\begin{remark} We used that if $\overline{\mathit S}_{K,[b']} \cap \mathit S_{K, [b]} \neq \emptyset$, then also $\overline{\mathit S}_{K',[b']} \cap \mathit S_{K', [b]} \neq \emptyset$  for any $K'\supset K$. The converse also holds, since $\pi_{K, K'}$ is proper and surjective. 
\end{remark}

\section{EKOR strata}\label{s:EKOR}

\subsection{Definition of $ \upsilon_K $}\label{ss:upsilon} Let $K$ be a parahoric subgroup, and let $\breve K_1$ be the pro-unipotent radical of $\breve K$. Then $$\breve K_\sigma \subset \breve K_\sigma (\breve K_1 \times \breve K_1) \subset \breve K \times \breve K.$$ Thus  $\lambda_K$  factors through the composition of the following two maps,  
$$
 Sh_K  \to G(\breve\BQ_p)/\breve K_\sigma \to G(\breve\BQ_p)/\breve K_\sigma (\breve K_1 \times \breve K_1),
$$
 where the first map is $\Upsilon_K$ and the second map is the natural projection map. We denote  the composition map by 
\begin{equation}
 \upsilon_K :  Sh_K  \to G(\breve\BQ_p)/\breve K_\sigma (\breve K_1 \times \breve K_1) .
\end{equation} 
We therefore obtain a commutative diagram
 
 \begin{equation*}
 \begin{xy}
 \xymatrix@C-1em{
  Sh_K  \ar[dd]_{ \upsilon_K }\ar[rd]^{ \lambda_K } &\\         &    \breve K\backslash G(\breve\BQ_p)/\breve K  . \\
			G(\breve\BQ_p)/\breve K_\sigma (\breve K_1 \times \breve K_1) 	  \ar[ur]_{{\rm nat}_K}  &           		                            
}
\end{xy}
\end{equation*}

\subsection{$G$-stable piece decomposition} Now we discuss the decomposition of $\breve K w \breve K$ into finitely many subsets stable under the action of $\breve K_\sigma$,  analogous to the $G$-stable piece decomposition (for reductive groups $G$ over algebraically closed fields) introduced by Lusztig in \cite{par-I}. 

\begin{theorem}\label{kwk}
Let $K$ be a parahoric subgroup. Then
\smallskip

(a) For any $x \in {}^K \tilde W$, $\breve K_\sigma(\breve K_1 x \breve K_1)=\breve K_\sigma (\breve I x \breve I)$.

\smallskip

(b) $G(\breve \BQ_p)=\sqcup_{x \in {}^K \tilde W} \breve K_\sigma(\breve K_1 x \breve K_1)=\sqcup_{x \in {}^K \tilde W} \breve K_\sigma (\breve I x \breve I)$.
\end{theorem}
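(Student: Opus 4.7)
The strategy is to establish (b) first and then deduce (a) from the resulting disjoint union, using the standard inclusion $\breve K_1 \subseteq \breve I$ (the pro-unipotent radical of $\breve K$ lies in the pro-unipotent radical of every Iwahori subgroup contained in $\breve K$).

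Starting from the Iwahori--Bruhat decomposition $G(\breve\BQ_p) = \sqcup_{w \in \tilde W} \breve I w \breve I$, every $w \in \tilde W$ factors uniquely as $w = vx$ with $x \in {}^K\tilde W$ the shortest representative of $W_K w$ and $v \in W_K$, and the length-additivity $\ell(vx) = \ell(v) + \ell(x)$ gives $\breve I v x \breve I = \breve I v \breve I \cdot \breve I x \breve I$. Hence $G(\breve\BQ_p) = \sqcup_{x \in {}^K\tilde W} \breve K x \breve I$. The heart of (b) is to show both $\breve K x \breve I \subseteq \breve K_\sigma(\breve I x \breve I)$ and $\breve K x \breve I \subseteq \breve K_\sigma(\breve K_1 x \breve K_1)$. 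I would argue by induction on $\ell(v)$ for the pieces $\breve I v x \breve I$: writing $v = s v'$ with $s \in S_K$ a simple reflection in the (finite) Coxeter group $W_K$ and $\ell(v') = \ell(v) - 1$, an explicit $\sigma$-conjugation by a lift of an appropriate element of $W_K$ (depending on $s$ and the Frobenius action) should move $\breve I v x \breve I$ into $\breve I v' x \breve I$, after which one absorbs the remaining $\breve I/\breve K_1$ components into $\breve K_1$ using the minimality of $x$ in ${}^K\tilde W$. This mirrors Lusztig's construction of $G$-stable pieces in the theory of parabolic character sheaves.

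For disjointness, each $\breve K_\sigma$-orbit meeting $\breve I x \breve I$ is contained in the single $\breve K$--$\breve K$-double coset $\breve K x \breve K$, so distinct double cosets $W_K x W_K$ automatically produce disjoint pieces; within a single such double coset, distinctness of the pieces for $x \ne x'$ in ${}^K\tilde W$ follows by tracking the Iwahori--Bruhat cells during the induction, which singles out $x \in {}^K \tilde W$ as the unique minimal-length representative surviving. Once both decompositions in (b) are established as disjoint unions covering $G(\breve\BQ_p)$, the trivial inclusion $\breve K_\sigma(\breve K_1 x \breve K_1) \subseteq \breve K_\sigma(\breve I x \breve I)$ forces equality cell by cell, which gives (a).

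The main obstacle will be the inductive step, specifically the choice of $\sigma$-conjugating element at each simple-reflection reduction: because $\sigma$ acts nontrivially on the set $S_K$ of simple affine reflections defining $\breve K$, conjugation by a naive representative of $s$ fails to stay inside $\breve K_\sigma$, and one must instead conjugate by a Frobenius-adjusted word (essentially of the form $s \sigma(s) \sigma^2(s) \cdots$ truncated appropriately) to land in the correct orbit. Keeping track of this Frobenius twist while simultaneously absorbing the $\breve I/\breve K_1$-part into $\breve K_1$ is the combinatorial crux of the theorem.
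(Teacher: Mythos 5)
Your intermediate claim $\breve K x \breve I \subseteq \breve K_\sigma(\breve I x \breve I)$ is not correct, and the whole argument hinges on it. If that inclusion held, then since $G(\breve\BQ_p)=\sqcup_{x\in {}^K\tilde W}\breve K x\breve I$ is a disjoint partition and (by the theorem you are trying to prove) so is $\sqcup_{x\in {}^K\tilde W}\breve K_\sigma(\breve I x\breve I)$, you would be forced to conclude $\breve K x\breve I=\breve K_\sigma(\breve I x\breve I)$ for every $x$. That equality fails: the two sets share the seed $\breve I x\breve I$ but grow in incompatible directions (left $\breve K$-translation versus twisted $\breve K$-conjugation). Concretely for $\GL_2$ with $K$ hyperspecial and $x=\omega=t^{(1,0)}s$ (the length-zero generator of $\Omega$), the cell $\breve I t^{(0,1)}\breve I=\breve I s\omega\breve I$ lies in $\breve K\omega\breve I$, yet twisted conjugation by $s$ sends $t^{(0,1)}$ to $s t^{(0,1)}\sigma(s)=t^{(1,0)}=\omega s\in {}^K\tilde W$, an element of a \emph{different} left $W_K$-coset; so $\breve I t^{(0,1)}\breve I$ sits inside $\breve K_\sigma(\breve I\,\omega s\,\breve I)$ and is disjoint from $\breve K_\sigma(\breve I\omega\breve I)$. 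This also exposes the flaw in the inductive step: twisted conjugation $w\mapsto uw\sigma(u)^{-1}$ multiplies on the \emph{right} by $\sigma(u)^{-1}$, so it does not keep you inside the fixed coset $W_K x$ and does not ``move $\breve I vx\breve I$ into $\breve I v'x\breve I$''. The correct version of partial conjugation here is Proposition \ref{new} of the paper: $\breve K_\sigma(\breve I w\breve I)=\sqcup_{x'\in\Sigma_K(w)}\breve K_\sigma(\breve I x'\breve I)$ where $\Sigma_K(w)$ can contain several elements, and where the minimal representative changes along the reduction.

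The paper's proof takes a genuinely different route that avoids all of this. It fixes the $\breve K$-double coset $\breve K w\breve K$ for $w\in {}^K\tilde W^K$, identifies $\breve K w\breve K/\breve K_\sigma$ with $\breve K/(\breve K\cap w^{-1}\breve K w)_{\sigma'}$ where $\sigma'=\sigma\circ\mathrm{Ad}(w)$, and then passes to the (finite-dimensional) reductive quotient $\overline{\breve K}=\breve K/\breve K_1$: using Carter's description of the image of $\breve K\cap w^{-1}\breve K w$ in $\overline{\breve K}$, the problem becomes Lusztig's and He's theory of $G$-stable pieces for the finite reductive group $\overline{\breve K}$ (references \cite{L3}, \cite{He-09}, \cite{par-I}), which gives both the parametrization by $W_K w W_K\cap {}^K\tilde W$ (hence (b)) and the identity $\breve K_\sigma(\breve K_1 x\breve K_1)=\breve K_\sigma(\breve I x\breve I)$ (hence (a)). If you want to salvage a purely affine-combinatorial proof in the spirit of your plan, you would have to replace your claimed inclusion by the correct partial conjugation statement, accept that the index $x$ changes during the reduction, and control the resulting sets $\Sigma_K(w)$ — at which point you are essentially reproving Proposition \ref{new}, and the disjointness and the $\breve K_1$-statement in (a) still require a separate argument.
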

This result is essentially contained in \cite[1.4]{L3} and \cite[Proposition 2.5 \& 2.6]{He-11}. We include a proof for completeness.

\begin{proof} 

Let $\tilde{\mathbb S}$ be the set of simple reflections in  $\tilde W$ and $J \subset \tilde{\mathbb S}$ be the set of simple reflections in $W_K$. Since our parahoric subgroup $\breve K$ comes from $K$ over $\BQ_p$, we have  $\sigma(J)=J$. 

Let $w \in {}^J \tilde W^J$, i.e., $w$ is of shortest length in $W_K w W_K$. It suffices to show that 
\begin{equation}\label{6-2}
\breve K w \breve K=\sqcup_{x \in W_K w W_K \cap {}^K \tilde W} \breve K_\sigma (\breve K_1 x \breve K_1)
\end{equation} and 
\begin{equation}\label{6-3}
 \breve K_\sigma (\breve K_1 x \breve K_1)=\breve K_\sigma (\breve I x \breve I) ,\quad \forall x \in W_K w W_K \cap {}^K \tilde W.
\end{equation}

Set $\sigma'=\sigma \circ \text{Ad}(w)$. Then we obtain  $\sigma': (\breve K \cap w^{-1} \breve K w) \to (\breve K \cap w \breve K w^{-1})$. The map $\breve K \to \breve K w \breve K, k \mapsto w k$ induces a bijection 
\begin{equation}\label{identshiftw}
\breve K/(\breve K \cap w^{-1} \breve K w)_{\sigma'} \to \breve K w \breve K/\breve K_\sigma.
\end{equation}

Let $\overline{\breve K}=\breve K/\breve K_1$ be the reductive quotient of $\breve K$. Let $\overline{B}$ be the image of $\breve I$ in $\overline{\breve K}$ and $\overline{T}$ be the maximal torus of the Borel subgroup $\overline{B}$. Let $J_1=J \cap \Ad(w)^{-1}(J)$. 

By \cite[Theorem 2.8.7]{Car}, the image of $\breve K \cap w^{-1} \breve K w$ in $\overline{\breve K}$ is of the form $\overline{L}_{J_1} U$, where $\overline{L}_{J_1}$ is the standard Levi subgroup of type $J_1$ of $\overline{\breve K}$ and $U$ is a connected subgroup in the unipotent radical $U_{\overline{P}_{J_1}}$ of the standard parabolic subgroup $\overline{P}_{J_1}$  of type $J_1$ of $\overline{\breve K}$. We obtain a natural map 
\begin{equation}\label{reductmap}
f\colon  \breve K w \breve K/\breve K_\sigma \to \overline{\breve K}/({\overline{L}_{J_1}})_{\sigma'} (U_{\overline{P}_{J_1}} \times U_{\overline{P}_{\sigma'(J_1)}}) .
\end{equation}
By \cite[1.6 (c)]{L3}, the map $f$ in \eqref{reductmap} factors through a bijection 
\begin{equation*}
\breve K w \breve K/\breve K_\sigma (\breve K_1 \times \breve K_1)\to  \overline{\breve K}/({\overline{L}_{J_1}})_{\sigma'} (U_{\overline{P}_{J_1}} \times U_{\overline{P}_{\sigma'(J_1)}}) .
\end{equation*} 
By \cite[3.1(b) \& (c)]{He-09},  the underlying space of $ \overline{\breve K}/({\overline{L}_{J_1}})_{\sigma'} (U_{\overline{P}_{J_1}} \times U_{\overline{P}_{\sigma'(J_1)}})$ is the finite set ${}^{J_1} W_K$. By \cite[2.1]{par-I}, $w ({}^{J_1} W_K)=W_K w W_K \cap {}^K \tilde W$. Hence (\ref{6-2}) is proved. 

By \cite[2.1 \& Corollary 2.6]{He-09}, for any $x \in {}^{J_1} W_K$, $$({\overline{L}_{J_1}})_{\sigma'} (U_{\overline{P}_{J_1}} x U_{\overline{P}_{\sigma'(J_1)}})=({\overline{L}_{J_1}})_{\sigma'} (\overline{B} x \overline{B}).$$ Its inverse image under $f$ is $\breve K_\sigma (\breve K_1 x \breve K_1)=\breve K_\sigma (\breve I x \breve I)$. Hence (\ref{6-3}) is proved. 
\end{proof}

\begin{corollary}\label{natI}
The map ${\rm nat}_I$ is bijective. \qed
\end{corollary}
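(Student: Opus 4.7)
The plan is to specialize Theorem \ref{kwk} to the Iwahori case $K=I$ and read off the result. Since $W_I$ is trivial, the set $J$ of simple reflections generating $W_K$ is empty, and consequently ${}^K\tilde W=\tilde W$. For every $w\in\tilde W$ the double coset $W_K w W_K$ reduces to the singleton $\{w\}$, so formula \eqref{6-2} of Theorem \ref{kwk} collapses to
$$
\breve I\, w\, \breve I \;=\; \breve I_\sigma\bigl(\breve I_1\, w\, \breve I_1\bigr),\qquad w\in\tilde W ,
$$
while part (b) of that theorem, combined with the classical Iwahori--Bruhat decomposition, provides the global identity
$$
G(\breve\BQ_p) \;=\; \bigsqcup_{w\in\tilde W} \breve I\, w\, \breve I \;=\; \bigsqcup_{w\in\tilde W} \breve I_\sigma\bigl(\breve I_1\, w\, \breve I_1\bigr).
$$

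The first decomposition identifies $\tilde W$ with $\breve I\backslash G(\breve\BQ_p)/\breve I$, giving the target of ${\rm nat}_I$. The second identifies $\tilde W$ with the orbit space $G(\breve\BQ_p)/\breve I_\sigma(\breve I_1\times\breve I_1)$ via $w\mapsto\breve I_\sigma(\breve I_1 w\breve I_1)$, giving the source of ${\rm nat}_I$. Because the two partitions of $G(\breve\BQ_p)$ agree piece by piece, the map ${\rm nat}_I$ becomes the identity on $\tilde W$ under these canonical parametrizations, and is in particular a bijection.

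There is essentially no obstacle to overcome: the corollary is simply the Iwahori specialization of Theorem \ref{kwk}. The substantive content of that theorem -- the interplay with the standard Levi of the reductive quotient and Lusztig's $G$-stable piece decomposition -- becomes vacuous here, because the reductive quotient $\breve I/\breve I_1$ is a torus, so no nontrivial Bruhat cells contribute and every $\breve I$-double coset is already a single $\breve I_\sigma(\breve I_1\times\breve I_1)$-orbit.
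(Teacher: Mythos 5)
Your proof is correct and matches the paper's approach: the paper marks the corollary with a terminal $\qed$ precisely because it is the immediate specialization of Theorem~\ref{kwk} to $K=I$, where $W_I=\{1\}$ forces ${}^I\tilde W=\tilde W$ and collapses each double coset $\breve I w\breve I$ to a single $\breve I_\sigma(\breve I_1\times\breve I_1)$-orbit.
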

Let us identify $\breve K\backslash G(\breve\BQ_p)/\breve K $ with $W_K\backslash\tilde W/W_K$. We have the following commutative diagram,

\[\xymatrix{
G(\breve\BQ_p)/\breve K_\sigma (\breve K_1 \times \breve K_1)\ar[d]^-{{\rm nat}_K}\ar[r] & {}^K \tilde W \ar[d]^-{{\rm nat}_K} \\ 
 \breve K\backslash G(\breve\BQ_p)/\breve K  \ar[r]&   W_K\backslash \tilde W/W_K .}
\]

Here the horizontal arrows are bijective and the map ${\rm nat}_K\colon {}^K \tilde W \to W_K\backslash \tilde W/W_K$ is the natural projection  sending an element to its double coset. 
Using Theorem \ref{kwk}, we may rewrite the diagram in the previous subsection as 

 \begin{equation*}
 \begin{xy}
 \xymatrix@C-1em{
  Sh_K  \ar[dd]_{ \upsilon_K }\ar[rd]^{ \lambda_K } &\\         &    W_K\backslash \tilde W/W_K  . \\
			^K\tilde W 	  \ar[ur]_{{\rm nat}_K}  &           		                            
}
\end{xy}
\end{equation*}

\begin{remark}
As a consequence of Proposition \ref{incl} and Theorem \ref{kwk}, the image of $\upsilon_K$ is finite. 
\end{remark}
\subsection{Definition of EKOR strata} By Axiom \ref{ax locmod} and Theorem \ref{kwk}, the image of $  \upsilon_K $ is contained in the set  
$\Adm(\{\mu\})^K \cap {}^K \tilde W.$ 
In fact, by Corollary \ref{imUps} (which uses Axiom \ref{ax Y} b)), this set is {\it equal} to the image of $\upsilon_K$. 
\begin{definition} The  {\it Ekedahl-Kottwitz-Oort-Rapoport stratum} (EKOR stratum) of $  Sh_K $ attached to $x \in {}^K \tilde W$ is the subset
 \[
   EKOR_{K, x} = \upsilon_K ^{-1}(x) \subset   Sh_K .
 \]  
\end{definition}
Hence $EKOR_{K, x}$ is non-empty only if $x\in \Adm(\{\mu\})^K $. 
We will prove in Theorem \ref{EKORlocclosed} that $EKOR_{K, x}$ is a locally closed subset. 
\begin{remarks}   (1) For a general parahoric subgroup,  the EKOR stratification is finer than the KR stratification (the map $ \lambda_K $ factors through $ \upsilon_K $).

(2) If $G$ is unramified and $K$ is hyperspecial, the definition of the EKOR stratification coincides with the Ekedahl-Oort stratification in the sense of Viehmann \cite{Vi}.
If $K=I$ is the Iwahori subgroup then, by Corollary \ref{natI}, the EKOR strata coincide with the KR strata.  Therefore the  EKOR stratification for  a  general parahoric subgroup  interpolates between the EO stratification for the hyperspecial case and the KR stratification for the Iwahori case.
 \end{remarks}

\subsection{Change of  parahoric}
Now we discuss the relation between the EKOR strata for different parahoric subgroups. To do this, we need the following result.

\begin{proposition}\label{new}
Let $K$ be a standard parahoric subgroup. For any $w \in \tilde W$, there exists a subset $\Sigma_K(w)$ of $W_K w W_K \cap {}^K \tilde W$ such that $$\breve K_\sigma (\breve I w \breve I)=\sqcup_{x \in \Sigma_K(w)} \breve K_\sigma (\breve I x \breve I).$$
Moreover, if $w \in {}^K \tilde W$, then $\Sigma_K(w)=\{w\}$. 

{\rm In general, $\Sigma_K(w)$ may contain more than one element. }
\end{proposition}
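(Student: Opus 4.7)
The plan is to derive this essentially as a corollary of Theorem \ref{kwk}, which already provides the desired disjoint decomposition of $G(\breve \BQ_p)$ into $\breve K_\sigma$-orbits indexed by ${}^K\tilde W$.

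First, I would observe that $\breve I w \breve I \subset \breve K w \breve K$, so
\[
\breve K_\sigma(\breve I w \breve I) \subset \breve K_\sigma(\breve K w \breve K) = \breve K w \breve K.
\]
By equation \eqref{6-2} (which is part of the proof of Theorem \ref{kwk}),
\[
\breve K w \breve K = \bigsqcup_{x \in W_K w W_K \cap {}^K\tilde W} \breve K_\sigma(\breve I x \breve I),
\]
and each piece on the right is $\breve K_\sigma$-stable. Since $\breve K_\sigma(\breve I w \breve I)$ is itself $\breve K_\sigma$-stable, it is a union of those pieces it meets. Defining
\[
\Sigma_K(w) := \bigl\{ x \in W_K w W_K \cap {}^K\tilde W : \breve I x \breve I \cap \breve K_\sigma(\breve I w \breve I) \neq \emptyset \bigr\},
\]
I immediately get the disjoint decomposition claimed in the proposition; the disjointness is inherited from the disjoint decomposition in Theorem \ref{kwk}(b).

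For the moreover part, I would argue as follows. If $w \in {}^K\tilde W$, then $\breve K_\sigma(\breve I w \breve I)$ is itself one of the pieces in the global decomposition of Theorem \ref{kwk}(b), so clearly $w \in \Sigma_K(w)$. Conversely, for any $x \in \Sigma_K(w)$, the $\breve K_\sigma$-stability forces $\breve K_\sigma(\breve I x \breve I) \subset \breve K_\sigma(\breve I w \breve I)$; but these are both among the pairwise disjoint pieces listed in Theorem \ref{kwk}(b), so the inclusion forces equality, hence $x = w$.

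There is no real obstacle here: the entire content of the statement is already packaged inside Theorem \ref{kwk}, and the proof amounts to extracting the right $\breve K_\sigma$-saturation. The only thing worth flagging is the final remark that $\Sigma_K(w)$ may be larger than a singleton for general $w \notin {}^K\tilde W$; this is consistent with the fact that a single $\breve I$-double coset $\breve I w \breve I$ can spread across several ${}^K\tilde W$-indexed $\breve K_\sigma$-orbits after saturation by $\breve K_\sigma$, so no further sharpening of $\Sigma_K(w)$ should be expected at this level of generality.
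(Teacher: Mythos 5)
The main direction of your argument has a genuine gap. You define
\[
\Sigma_K(w) := \bigl\{ x \in W_K w W_K \cap {}^K\tilde W : \breve I x \breve I \cap \breve K_\sigma(\breve I w \breve I) \neq \emptyset \bigr\},
\]
and it is indeed automatic (from \eqref{6-2}, \eqref{6-3} and $\breve K_\sigma$-stability of the pieces) that
$\breve K_\sigma(\breve I w \breve I) \subset \bigsqcup_{x \in \Sigma_K(w)} \breve K_\sigma(\breve I x \breve I)$.
But the reverse inclusion is precisely the content of the proposition, and it does not follow from $\breve K_\sigma$-stability. Being $\breve K_\sigma$-stable means being a union of $\breve K_\sigma$-\emph{orbits}; the pieces $\breve K_\sigma(\breve I x \breve I)$ are much coarser, each a union of infinitely many $\breve K_\sigma$-orbits. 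A $\breve K_\sigma$-stable subset of $\breve K w \breve K$ can perfectly well meet a piece $\breve K_\sigma(\breve I x \breve I)$ without containing it, by picking up only some of the $\breve K_\sigma$-orbits inside that piece. So the sentence ``Since $\breve K_\sigma(\breve I w \breve I)$ is itself $\breve K_\sigma$-stable, it is a union of those pieces it meets'' is a non-sequitur: you would need to show that
$\breve I x \breve I \cap \breve K_\sigma(\breve I w \breve I)\neq\emptyset$ forces $\breve I x \breve I \subset \breve K_\sigma(\breve I w \breve I)$, and this requires a real argument.

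The paper does not treat this as a corollary of Theorem \ref{kwk}; it proves it by induction on $\ell(w)$ via the partial conjugation method, using Proposition \ref{partial-conj} and Lemmas \ref{new1}, \ref{new2}. Lemma \ref{new1} shows that conjugating by a simple reflection $s \in J$ either leaves $\breve K_\sigma(\breve I w \breve I)$ unchanged (length-preserving case) or splits it into two pieces of smaller length, and Lemma \ref{new2} handles the terminal case where $w = ux$ with $x \in {}^J\tilde W$ and $u \in W_{I(J,x,\sigma)}$. This gives a constructive, recursive description of $\Sigma_K(w)$, which is also used later (for instance in the closure-relation argument, Theorem \ref{EKORlocclosed}). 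Your ``moreover'' step, by contrast, is fine: when $w \in {}^K\tilde W$ the two sides are single pieces of the global partition of Theorem \ref{kwk}(b), and disjointness forces $\Sigma_K(w) = \{w\}$. But that easy case does not salvage the general case, where the whole point is that $\breve I w \breve I$ is not aligned with the partition.
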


The proof uses the ``partial conjugation method'' of \cite{He-07}. We first introduce some notation. 

Let $J \subset \tilde{\mathbb S}$. For $w, w' \in \tilde W$ and $s \in J$, we write $w \xrightarrow{s}_{J, \sigma} w'$ if $w'=s w \sigma(s)$ and $\ell(w') \le \ell(w)$. We write $w \to_{J, \sigma} w'$ if there exists a finite sequence $w=w_0, w_1, \cdots, w_n=w'$ and $s_1, \cdots, s_n \in J$ such that $w_0 \xrightarrow{s_1}_{J, \sigma} w_1 \xrightarrow{s_2}_{J, \sigma} \cdots \xrightarrow{s_n}_{J, \sigma} w_n$.

For $w \in {}^J \tilde W$, we write $\text{Ad}(w) \sigma(J)=J$ if for any simple reflection $s \in J$, there exists a simple reflection $s' \in J$ such that $w \sigma(s) w^{-1}=s'$. In this case, $w \in {}^J \tilde W^{\sigma(J)}$. It is easy to see that for any $J_1, J_2 \subset \tilde{\mathbb S}$, and $w \in {}^{J_1 \cup J_2} \tilde W$, with $\text{Ad}(w) \sigma(J_1)=J_1$ and $\text{Ad}(w) \sigma(J_2)=J_2$, it follows that  $\text{Ad}(w) \sigma(J_1 \cup J_2)=J_1 \cup J_2$.  Thus for any $J \subset \tilde{\mathbb S}$ and $w \in {}^K \tilde W$, the set $\{J' \subset J\mid  \text{Ad}(w) \sigma(J')=J'\}$ contains a unique maximal element. We denote it by $I(J, w, \sigma)$. 

We will use the following result \cite[Proposition 3.4]{He-07} (see also \cite[Theorem 2.5]{HN}).
\begin{proposition}\label{partial-conj}
Let $w \in \tilde W$. For any $J \subset \tilde{\mathbb S}$, there exists $x \in {}^J \tilde W$, and an element $u$ in the Weyl group $W_{I(J, x, \sigma)}$ such that $w \to_{J, \sigma} u x$. \qed
\end{proposition}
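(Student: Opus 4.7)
The plan is to combine Proposition \ref{partial-conj} with the partition of $G(\breve\BQ_p)$ from Theorem \ref{kwk}(b). If $w \in {}^K \tilde W$, the partition immediately places $\breve I w \breve I$ inside the single piece $\breve K_\sigma(\breve I w \breve I)$, forcing $\Sigma_K(w)=\{w\}$; this disposes of the ``moreover'' assertion. For general $w\in\tilde W$, I would apply Proposition \ref{partial-conj} with $J\subset\tilde{\mathbb S}$ the set of simple reflections in $W_K$ to obtain $x\in{}^K\tilde W$ and $u\in W_{I(J,x,\sigma)}$ with $w\to_{J,\sigma}ux$, realized as a chain of elementary steps $w=w_0\xrightarrow{s_1}_{J,\sigma}w_1\xrightarrow{s_2}_{J,\sigma}\cdots\xrightarrow{s_n}_{J,\sigma}w_n=ux$, with $s_i\in J$.

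The central technical input is the lemma that each elementary step yields $\breve K_\sigma(\breve I w_i\breve I)=\breve K_\sigma(\breve I w_{i+1}\breve I)$. To prove it, pick a lift $\dot s\in N(T)\cap \breve K$ of $s$; the conjugate $\dot s\cdot \breve I w_i\breve I\cdot \sigma(\dot s)^{-1}$ lies by construction in $\breve K_\sigma(\breve I w_i\breve I)$, and standard Iwahori Bruhat--Tits multiplication rules for $\breve I\dot s\breve I\cdot \breve I w_i\breve I\cdot \breve I\sigma(\dot s)\breve I$ identify this $\sigma$-conjugate with $\breve I w_{i+1}\breve I$, up to cells already contained in $\breve K_\sigma(\breve I w_i\breve I)$. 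The reverse inclusion comes from running the same analysis with $\dot s$ replaced by its inverse. Iterating along the chain yields $\breve K_\sigma(\breve I w\breve I)=\breve K_\sigma(\breve I ux\breve I)$.

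To construct $\Sigma_K(w)$ explicitly, define
\[
\Sigma_K(w)=\{y\in W_K wW_K\cap {}^K\tilde W : \breve K_\sigma(\breve I y\breve I)\subseteq \breve K_\sigma(\breve I w\breve I)\},
\]
so that the inclusion $\supseteq$ in the claimed identity is tautological; the constraint on the index range follows from $\breve I w\breve I\subset \breve K w\breve K$ and the disjoint decomposition of $\breve K w\breve K$ in equation (6-2) of the proof of Theorem \ref{kwk}. For the reverse inclusion I would decompose $\breve K_\sigma(\breve I ux\breve I)$ further by applying the partial-conjugation procedure \emph{internally} to the finite Coxeter subgroup $W_{I(J,x,\sigma)}$ equipped with the automorphism $\Ad(x)\circ\sigma$ (which preserves it by definition of $I(J,x,\sigma)$). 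The outputs, when reinserted into $\tilde W$ via right multiplication by $x$ and reduced by Bruhat processing to ${}^K\tilde W$, exhaust the pieces $\breve K_\sigma(\breve I y\breve I)$ that meet $\breve K_\sigma(\breve I w\breve I)$; combined with the key lemma applied repeatedly, this produces the decomposition $\breve K_\sigma(\breve I w\breve I)=\sqcup_{y\in\Sigma_K(w)}\breve K_\sigma(\breve I y\breve I)$, disjointness being automatic from Theorem \ref{kwk}(b).

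The main obstacle is the key lemma, especially in the \emph{mixed-length} case where $\ell(sw_i)-\ell(w_i)$ and $\ell(w_i\sigma(s))-\ell(w_i)$ carry opposite signs: the Iwahori product $\breve I\dot s\breve I\cdot \breve I w_i\breve I$ splits into two Bruhat cells, and one must select the lift $\dot s\in \breve K$ so that the $\sigma$-conjugate of a generic element of $\breve I w_i\breve I$ lands in the intended cell $\breve I w_{i+1}\breve I$ rather than a neighbor. This parallels \cite[Lemma 2.4]{HN} in the purely Iwahori setting, and is where the hypothesis $s\in J\subset W_K$ (so that $\dot s\in\breve K$) is essential. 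Once this lemma is established, the remainder is a bookkeeping argument combining the two applications of partial conjugation with Theorem \ref{kwk}(b).
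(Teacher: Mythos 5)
Your proposal does not actually address the statement you were asked to prove. The statement is Proposition \ref{partial-conj} itself, yet your very first sentence announces that you will ``combine Proposition \ref{partial-conj} with the partition of $G(\breve\BQ_p)$ from Theorem \ref{kwk}(b)'', and everything that follows --- the set $\Sigma_K(w)$, the ``moreover'' clause $\Sigma_K(w)=\{w\}$ for $w\in{}^K\tilde W$, and the decomposition $\breve K_\sigma(\breve I w\breve I)=\sqcup_{y\in\Sigma_K(w)}\breve K_\sigma(\breve I y\breve I)$ --- is the content of Proposition \ref{new}, a different result which the paper deduces \emph{from} Proposition \ref{partial-conj} together with Lemmas \ref{new1} and \ref{new2}. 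As a proof of Proposition \ref{partial-conj} your argument is therefore circular: its first step assumes the conclusion, and the body of the argument establishes (a sketch of) another proposition altogether.

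Beyond this mismatch, note that Proposition \ref{partial-conj} is a purely combinatorial assertion about the extended affine Weyl group $\tilde W$, its length function, and the elementary moves $w\xrightarrow{s}_{J,\sigma} sw\sigma(s)$ for $s\in J\subset\tilde{\mathbb S}$; neither its hypotheses nor its conclusion mention Iwahori subgroups, $\breve K_\sigma$-orbits, or any structure of $G(\breve\BQ_p)$. In the paper it is quoted without proof from \cite{He-07}, Proposition 3.4 (see also \cite{HN}, Theorem 2.5), where it is established by an induction on $\ell(w)$ internal to Coxeter combinatorics: one either exhibits $s\in J$ strictly decreasing the length under twisted conjugation, or shows that a minimal-length element of the $W_J$-$\sigma$-conjugation orbit has the form $ux$ with $x\in{}^J\tilde W$ and $u\in W_{I(J,x,\sigma)}$. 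A correct proof must be of this nature; the Bruhat--Tits cell manipulations you propose (the key lemma $\breve K_\sigma(\breve I w_i\breve I)=\breve K_\sigma(\breve I w_{i+1}\breve I)$, Lang's theorem, the decomposition \eqref{6-2}) are, in the logical order of the paper, consequences exploited \emph{after} Proposition \ref{partial-conj} is available --- they constitute Lemmas \ref{new1}, \ref{new2} and the proof of Proposition \ref{new} --- and cannot serve as a route to proving it.
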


We also have the following results. The proofs are similar to the proofs of \cite[Lemma 3.1 \& Lemma 3.2]{H1} and we omit them here.

\begin{lemma}\label{new1}
Let $K$ be a standard parahoric subgroup. Let $w \in \tilde W$ and $s \in J$. Then 

(1) If $\ell(s w \sigma(s))=\ell(w)$, then $\breve K_\sigma (\breve I w \breve I)=\breve K_\sigma (\breve I s w \sigma(s) \breve I)$. 

(2) If $\ell(s w \sigma(s))<\ell(w)$, then $\breve K_\sigma (\breve I w \breve I)=\breve K_\sigma (\breve I s w \sigma(s) \breve I) \cup K_\sigma (\breve I s w \breve I)$. \qed
\end{lemma}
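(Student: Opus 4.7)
The plan is to run the standard BN-pair argument used for \cite[Lemma 3.1 \& Lemma 3.2]{H1}. Since $s\in J\subset W_K$, a lift $n_s\in \breve K\cap N(T)$ exists, so $(n_s,\sigma(n_s))\in \breve K_\sigma$ and $\sigma$-conjugation by $n_s$ preserves $\breve K_\sigma$-orbits. Moreover, $\breve I$-$\sigma$-conjugation preserves each Iwahori double coset (from $i\,j_1 w j_2\,\sigma(i)^{-1}\in \breve I w\breve I$), so the orbit $\breve K_\sigma(\breve I w\breve I)$ is generated from $\breve I w\breve I$ by iterating the single operation $x\mapsto n_s x\sigma(n_s)^{-1}$.

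The core step is to evaluate $n_s(\breve I w\breve I)\sigma(n_s)^{-1}$. From $n_s\in\breve I s\breve I$ and $\sigma(n_s)^{-1}\in\breve I\sigma(s)\breve I$, this set lies inside $\breve I s\breve I\cdot\breve I w\breve I\cdot\breve I\sigma(s)\breve I$, which is controlled by the classical rule
\[
\breve I s\breve I\cdot\breve I w\breve I=\begin{cases}\breve I sw\breve I,&\ell(sw)>\ell(w),\\ \breve I sw\breve I\sqcup\breve I w\breve I,&\ell(sw)<\ell(w),\end{cases}
\]
together with its right-multiplication analogue for $\sigma(s)$. A finer analysis using the Iwahori factorization adapted to $s$ pins down which Iwahori double cosets are actually reached. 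In case (1), where $\ell(sw\sigma(s))=\ell(w)$ (subcases $sw>w$ with $sw\sigma(s)<sw$, or $sw<w$ with $sw\sigma(s)>sw$), this shows $n_s(\breve I w\breve I)\sigma(n_s)^{-1}\subseteq \breve I sw\sigma(s)\breve I$; the opposite inclusion follows by symmetry, replacing $w$ by $sw\sigma(s)$ and noting that a second $n_s$-$\sigma$-conjugation returns $w$. In case (2), necessarily $sw<w$ and $sw\sigma(s)<sw$, and the same computation yields the two-term inclusion $n_s(\breve I w\breve I)\sigma(n_s)^{-1}\subseteq \breve I sw\sigma(s)\breve I\cup\breve I sw\breve I$; the reverse inclusion is obtained by conjugating elements of $\breve I sw\sigma(s)\breve I$ and $\breve I sw\breve I$ upward by $n_s^{-1}$, which now triggers the length-increasing branch of the product rule, so no spurious cosets appear.

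The main obstacle, exactly as in \cite[Lemma 3.2]{H1}, is establishing \emph{equality} rather than mere inclusion in the case analysis: one must verify that both $\breve I sw\sigma(s)\breve I$ and, in case (2), $\breve I sw\breve I$ are actually hit by $\sigma$-conjugates of concrete elements of $\breve I w\breve I$. This is not delivered by the coarse Iwahori product rule alone, but is routine once one selects coset representatives via a reduced decomposition of $w$ and tracks them through the conjugation.
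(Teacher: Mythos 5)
The paper gives no argument for this lemma; it simply states that ``the proofs are similar to the proofs of [H1, Lemma~3.1 \& Lemma~3.2] and we omit them here.'' Your proposal reconstructs exactly that argument: lift $s\in J$ to $n_s\in N(T)(\breve\BQ_p)\cap\breve K$, observe that $\breve I$-$\sigma$-conjugation preserves Iwahori double cosets so one only needs to track the effect of $\sigma$-conjugating by $n_s$, use the BN-pair product rule $\breve I s\breve I\cdot\breve I w\breve I\cdot\breve I\sigma(s)\breve I$ to bound the target cosets, and then refine via the Iwahori factorization to get equality in the two length cases. This is the same approach as the cited [H1] lemmas, so it is in line with what the paper intends.

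One small inaccuracy to flag in case (2): your assertion that conjugating upward by $n_s^{-1}$ ``triggers the length-increasing branch\dots so no spurious cosets appear'' is not literally true for $\breve I sw\breve I$. With $sw<w$ and $sw\sigma(s)<sw$ one also has $w\sigma(s)<w$ (if $w\sigma(s)>w$ then $\ell(s\cdot w\sigma(s))$ would have to drop by $3$, impossible), so $\sigma$-conjugating $\breve I sw\breve I$ by $n_s^{-1}$ a priori lands in $\breve I w\sigma(s)\breve I\cup\breve I w\breve I$, and the extra coset $\breve I w\sigma(s)\breve I$ genuinely can occur. This does not break the argument — $\breve I w\sigma(s)\breve I$ is already accounted for, since $\ell(s(w\sigma(s))\sigma(s))=\ell(sw)=\ell(w\sigma(s))$ puts it in the same $\breve K_\sigma$-orbit as $\breve I sw\breve I$ by part (1) — but the justification you give for the reverse inclusion should be stated in that corrected form rather than by claiming no extra cosets arise. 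The remaining work (the Iwahori-factorization bookkeeping establishing that both target cosets are actually hit) you correctly identify as the technical core; since the paper itself defers this to [H1], deferring to the same computation is acceptable here.
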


\begin{lemma}\label{new2}
Let $K$ be a standard parahoric subgroup and $J$ be the set of simple reflections in $W_K$. Let $x \in {}^J \tilde W^{\sigma(J)}$ with $\text{Ad}(x) \sigma(J)=J$. Then for any $u \in W_K$, we have 
\begin{flalign*}\phantom{\qed} & &\breve K_\sigma (\breve I u x \breve I)=\breve K_\sigma (\breve I x \breve I).& & \qed\end{flalign*}
\end{lemma}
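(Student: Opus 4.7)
The plan is to proceed by induction on the length $\ell(u)$ in $W_J=W_K$, with the base case $u=1$ being trivial. For the inductive step, we reduce the analysis of $\breve K_\sigma(\breve I ux\breve I)$ to that of shorter elements in $W_J\cdot x$ by a careful use of Lemma~\ref{new1}.

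Pick $s\in J$ with $\ell(su)<\ell(u)$ and set $u'=su$, so $\ell(u')=\ell(u)-1$. The hypothesis $\Ad(x)\sigma(J)=J$ gives $x\sigma(s)=\tau(s)\,x$ where $\tau(s):=x\sigma(s)x^{-1}\in J$, hence
\[
s\cdot(ux)\cdot\sigma(s) \;=\; su\cdot x\sigma(s) \;=\; (u'\tau(s))\,x.
\]
Since $u',\,u'\tau(s)\in W_J$ and $x\in{}^J\tilde W$, we have $\ell(s(ux)\sigma(s))=\ell(u'\tau(s))+\ell(x)$, while $\ell(ux)=\ell(u)+\ell(x)$, and $\ell(u'\tau(s))\in\{\ell(u)-2,\,\ell(u)\}$.

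If $\ell(u'\tau(s))=\ell(u)-2$, part (2) of Lemma~\ref{new1} applied to $w=ux$ yields
\[
\breve K_\sigma(\breve I ux\breve I) \;=\; \breve K_\sigma(\breve I u'\tau(s)\,x\breve I)\,\cup\,\breve K_\sigma(\breve I u'x\breve I),
\]
and since $u',\,u'\tau(s)\in W_J$ both have length strictly smaller than $\ell(u)$, the inductive hypothesis identifies each class on the right with $\breve K_\sigma(\breve I x\breve I)$, as required. If instead $\ell(u'\tau(s))=\ell(u)$, then part~(1) of Lemma~\ref{new1} only gives $\breve K_\sigma(\breve I ux\breve I)=\breve K_\sigma(\breve I u_1x\breve I)$ with $u_1:=u'\tau(s)\in W_J$ of the same length as $u$. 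We then iterate the procedure on $u_1$ (and on its successors), producing a sequence of $\tau$-twisted conjugates of $u$ inside $W_J$ all representing the same $\breve K_\sigma$-class. By the partial-conjugation analysis underlying Proposition~\ref{partial-conj} (cf.\ \cite{He-07}), this sequence eventually encounters an instance of the length-decreasing case, which is handled as above and closes the induction.

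The main obstacle is precisely the length-preserving case: Lemma~\ref{new1}(1) alone does not reduce $\ell(u)$, and its treatment depends on the combinatorial analysis of $\tau$-twisted conjugacy classes in the finite Coxeter group $W_J$, which guarantees termination of the iteration. This is exactly the delicate point handled in the parallel proof of \cite[Lemma 3.2]{H1}.
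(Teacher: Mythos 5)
Your reduction works cleanly in the length-decreasing case, but the treatment of the length-preserving case (your "Case B") has a real gap that the proposed fix does not close. When $\ell(u'\tau(s))=\ell(u)$, iterating Lemma~\ref{new1}(1) only produces a sequence $u, u_1, u_2, \ldots$ of $\tau$-twisted $W_K$-conjugates of $u$ \emph{all of the same length}. There is no reason for this sequence ever to reach a length-decreasing step: in fact it typically cycles. The simplest instance is already $\ell(u)=1$, say $u=s\in J$: then the only choice is $u'=1$, so $u_1 = \tau(s)$, then $u_2=\tau^2(s)$, and so on forever; the sequence runs through the $\tau$-orbit of $s$ in $J$ and never decreases length (concretely, for $\GL_4$ with $J=\{s_1,s_3\}$, $\sigma=\mathrm{id}$, and $x$ acting on $J$ by the swap $s_1\leftrightarrow s_3$, the iteration from $u=s_1$ is $s_1\to s_3\to s_1\to\cdots$). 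More fundamentally, the obstruction is that the base case $u=1$ can only be reached if $u$ lies in the $\tau$-twisted conjugacy class of the identity in $W_K$, which is false for any $u\neq 1$ of minimal length in its class. The appeal to the combinatorics behind Proposition~\ref{partial-conj} is a misapplication: under the lemma's hypothesis one has $I(J,x,\sigma)=J$, so partial conjugation puts $ux$ into the normal form $ux$ with $u\in W_{I(J,x,\sigma)}$ --- i.e., it does nothing further. The iteration terminates at a minimal-length element of the $\tau$-twisted class of $u$, not at the identity.

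The missing ingredient is a Lang-type statement, not a combinatorial one. Because $\Ad(x)\sigma(J)=J$ and $x\in{}^J\tilde W^{\sigma(J)}$, in the notation of the proof of Theorem~\ref{kwk} one has $J_1 = J\cap\Ad(x)^{-1}(J)=J$, so the index set ${}^{J_1}W_K$ is a single point and $\breve K x\breve K$ is a \emph{single} $\breve K_\sigma(\breve K_1\times\breve K_1)$-orbit, equal to $\breve K_\sigma(\breve I x\breve I)$. Equivalently, $\sigma'=\sigma\circ\Ad(x)$ is a Frobenius on the reductive quotient $\overline{\breve K}$ and Lang's theorem makes $\sigma'$-twisted conjugation transitive. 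Once this is in hand, $\breve I ux\breve I\subset\breve Kx\breve K$ and the $\breve K_\sigma(\breve K_1\times\breve K_1)$-stability of $\breve K_\sigma(\breve I ux\breve I)$ force $\breve K_\sigma(\breve I ux\breve I)=\breve Kx\breve K=\breve K_\sigma(\breve I x\breve I)$. Your induction via Lemma~\ref{new1} is a useful reduction (and it is the right reduction in Proposition~\ref{new}, where the partial-conjugation method genuinely decreases length), but for Lemma~\ref{new2} itself it cannot replace the Lang argument that absorbs $W_K$ into a single orbit.
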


\begin{proof}[Proof of Proposition \ref{new}]
We argue by induction on $\ell(w)$. 
Let $J$ be the set of simple reflections in $W_K$. Let $x \in {}^J \tilde W$ and $u \in W_{I(J, x, \sigma)}$ with $w \to_{J, \sigma} (u x)$. 

If $\ell(w)=\ell(u x)$, then by Lemma \ref{new1} (1), $\breve K_\sigma (\breve I w \breve I)=\breve K_\sigma (\breve I u x \breve I)$. Let $K' \subset K$ be the standard parahoric subgroup corresponding to $I(J, x, \sigma)$. Then by Lemma \ref{new2}, $$\breve K_\sigma(\breve I u x \breve I)=\breve K_\sigma (\breve K'_{\sigma} (\breve I u x \breve I))=\breve K_\sigma (\breve K'_{\sigma} (\breve I x \breve I))=\breve K_\sigma (\breve I x \breve I).$$ 

If $\ell(w)>\ell(u x)$, then by the definition of $\to_{J, \sigma}$, there exists $w' \in \tilde W$ and $s \in J$ such that $w \to_{J, \sigma} w'$ and $\ell(w)=\ell(w')>\ell(s w' \sigma(s))$. By Lemma \ref{new1}, $$\breve K_\sigma (\breve I w \breve I)=\breve K_\sigma (\breve I w' \breve I)=\breve K_\sigma (\breve I s w' \sigma(s) \breve I) \cup \breve K_\sigma (\breve I s w' \breve I).$$ Now the statement follows from inductive hypothesis on $s w' \sigma(s)$ and on $s w'$. 
\end{proof}

In fact, by Proposition \ref{partial-conj}, the subset $\Sigma_K(w)$ can be determined inductively as follows:
\begin{altitemize}
\item If $x \in {}^J \tilde W$ and $u \in W_{I(J, x, \sigma)}$, then $\Sigma_K(u x)=\{x\}$. 
\item If  $w \in \tilde W$ and $s \in J$ with $\ell(s w \sigma(s))=\ell(w)$, then $\Sigma_K(w)=\Sigma_K(s w \sigma(s))$. 
\item If $w \in \tilde W$ and $s \in J$ with $\ell(s w \sigma(s))<\ell(w)$, then $\Sigma_K(w)=\Sigma_K(s w \sigma(s)) \cup \Sigma_K(s w)$.

\end{altitemize}

We also make use of the following result  \cite[Theorem 6.1]{H2} (see also \cite[Proposition 5.1]{HH} for a different proof). 

\begin{theorem}\label{comp}
For any standard parahoric subgroup $K$, 
\begin{flalign*}\phantom{\qed} & & \Adm(\{\mu\})^K \cap {}^K \tilde W=\Adm(\{\mu\}) \cap {}^K \tilde W.& & \qed\end{flalign*}
\end{theorem}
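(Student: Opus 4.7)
The inclusion $\Adm(\{\mu\}) \cap {}^K\tilde W \subseteq \Adm(\{\mu\})^K \cap {}^K\tilde W$ is immediate from $\Adm(\{\mu\}) \subseteq W_K \Adm(\{\mu\}) W_K$. For the reverse inclusion, I would fix $x \in \Adm(\{\mu\})^K \cap {}^K\tilde W$ and write $x = uav$ with $u,v \in W_K$ and $a \in \Adm(\{\mu\})$, so $a \le t^{y(\underline\mu)}$ for some $y \in W_0$. The plan is first to normalize $x$ using parabolic double coset theory, and then to invoke a key combinatorial closure property of the admissible set.

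For the normalization, the double coset $W_K x W_K = W_K a W_K$ has a unique minimum-length element $m \in {}^K\tilde W^K$, and every element of this double coset admits a length-additive decomposition $u' m v'$ in the standard Coxeter-group sense. The hypothesis $x \in {}^K\tilde W$ (no simple reflection in $W_K$ shortens $x$ from the left) forces $u' = e$, so $x = m v_0$ with $v_0 \in W_K$ and $\ell(x) = \ell(m) + \ell(v_0)$. The minimum $m$ lies in $\Adm(\{\mu\})$ by downward closure of the admissible set under the Bruhat order (using $m \le a$), and the problem reduces to showing $m v_0 \in \Adm(\{\mu\})$.

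Since $\ell(m v_0) = \ell(m) + \ell(v_0)$, the product $m v_0$ equals the Demazure (Hecke) product $m * v_0$, and monotonicity of the Demazure product with respect to the Bruhat order yields
$$
m v_0 \;=\; m * v_0 \;\le\; t^{y(\underline\mu)} * v_0
$$
for any $y \in W_0$ with $m \le t^{y(\underline\mu)}$. Hence it suffices to find some such $y$ for which $t^{y(\underline\mu)} * v_0 \le t^{y'(\underline\mu)}$ for another $y' \in W_0$, i.e., for which $v_0$ is ``absorbed'' into the right descents of $t^{y(\underline\mu)}$. This is precisely the content of \cite[Theorem 6.1]{H2}, proved there by induction on $\ell(v_0)$ via a case analysis of how simple reflections of $W_K$ interact with the $W_0$-orbit of translations $\{t^{y(\underline\mu)}\}_{y \in W_0}$ in the extended affine Weyl group.

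The main obstacle is exactly this existence of a suitable $y$: not every $y$ with $m \le t^{y(\underline\mu)}$ will yield a Demazure product bounded above by a translation, since $t^{y(\underline\mu)} * v_0$ can easily have length exceeding that of every $t^{y'(\underline\mu)}$. One must therefore exploit the specific structure of the minimum element $m \in {}^K\tilde W^K$ of its double coset, together with the geometry of the $W_0$-orbit of $\underline\mu$ inside the apartment; and for a non-hyperspecial parahoric $K$ one must further control the contribution of affine simple reflections in $W_K$, which typically carry a translation $t^{y(\underline\mu)}$ out of the $W_0$-orbit of translations. The alternate proof of \cite[Proposition 5.1]{HH} circumvents this explicit Bruhat-order analysis by appealing instead to the Kottwitz--Rapoport identification of $\Adm(\{\mu\})$ with the $\mu$-permissible set, under which the required $W_K$-closure property becomes much more transparent.
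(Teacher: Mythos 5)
The paper does not prove Theorem~\ref{comp}; it is quoted as an external result (cited as~\cite[Theorem 6.1]{H2}, with~\cite[Proposition 5.1]{HH} named as an alternative proof), so there is no in-paper argument to compare against. Your proposal is a partial sketch rather than a proof, and you acknowledge as much.

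The preliminary normalization is fine: the easy inclusion is immediate; for $x\in\Adm(\{\mu\})^K\cap{}^K\tilde W$, the parabolic double-coset factorization lets you write $x=mv_0$ with $m\in{}^K\tilde W^K$ the minimal element, $v_0\in W_K$, and $\ell(x)=\ell(m)+\ell(v_0)$ (the condition $x\in{}^K\tilde W$ kills the left factor), and $m\le a$ for any $a\in\Adm(\{\mu\})\cap W_K x W_K$ gives $m\in\Adm(\{\mu\})$ by downward Bruhat closure. The Demazure-product monotonicity step is also a correct deduction. But the reduction you arrive at is at least as hard as the theorem and you do not discharge it. Since all translations $t^{y(\underline\mu)}$, $y\in W_0$, have the same length, the inequality $t^{y(\underline\mu)}*v_0\le t^{y'(\underline\mu)}$ forces $t^{y(\underline\mu)}*v_0=t^{y(\underline\mu)}=t^{y'(\underline\mu)}$, i.e., $v_0$ must be absorbed entirely into the right-descent structure of some translation $t^{y(\underline\mu)}$ dominating $m$. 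Establishing the existence of such a $y$ is precisely where the whole combinatorial difficulty lives, and you punt it back to~\cite[Theorem 6.1]{H2}, which is the statement being proved. So the argument is circular as written; the key idea — a construction or induction that actually produces the required $y$ (and, for non-hyperspecial $K$, handles the affine simple reflections in $W_K$) — is missing.

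One further caution: your characterization of the alternate proof in~\cite{HH} as ``appealing to the Kottwitz--Rapoport identification of $\Adm(\{\mu\})$ with the $\mu$-permissible set'' is not reliable. The equality $\Adm(\{\mu\})=\mathrm{Perm}(\{\mu\})$ is known to fail in general (it holds for $\GL_n$ and ${\rm GSp}_{2g}$, but not uniformly), and the paper~\cite{HH} is in fact about \emph{vertexwise} admissibility criteria that were introduced precisely because $\mu$-permissibility is not a trustworthy surrogate; so one should not cite it as resting on that identification without checking.
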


As a consequence, if $K' \subset K$, then the index set $\Adm(\{\mu\})^K \cap {}^K \tilde W$ for the EKOR strata with level $K$ is contained in the index set $\Adm(\{\mu\})^{K'} \cap {}^{K'} \tilde W$ for the EKOR strata with level $K'$ (the smaller the parahoric, the bigger the index set). In the sequel, we  identify  the index set for $K$ with  a subset of the index set for $K'$. 

Now we discuss the relation between the EKOR strata for different parahoric subgroups. 

\begin{proposition}\label{kk-compatible}
Let $K' \subset K$ be standard parahoric subgroups. Then for any $w \in \Adm(\{\mu\})^{K'} \cap {}^{K'} \tilde W$, 
$$\pi_{K', K} (EKOR_{K', w}) =\sqcup_{x \in \Sigma_K(w)} EKOR_{K, x}.$$
In particular, if $w \in \Adm(\{\mu\})^K \cap {}^K \tilde W$, then $\pi_{K', K} (EKOR_{K', w})=EKOR_{K, w}$.

\begin{remark}
For $w\in\Adm(\{\mu\})$, we have $W_K w W_K \subset \Adm(\{\mu\})^K$. Thus $\Sigma_K(w) \subset \Adm(\{\mu\})^K \cap {}^K \tilde W$ (the natural range of the EKOR strata for $K$). 
\end{remark}
\end{proposition}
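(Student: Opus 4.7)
The plan is to pass everything through the maps $\Upsilon_{K'}$ and $\Upsilon_K$. By Axiom \ref{ax Y}~a) applied to the inclusion $K' \subseteq K$, the square
\begin{equation*}
\Upsilon_K \circ \pi_{K', K} = p \circ \Upsilon_{K'}
\end{equation*}
commutes, where $p\colon G(\breve\BQ_p)/\breve K'_\sigma \to G(\breve\BQ_p)/\breve K_\sigma$ is the natural projection induced by the inclusion $\breve K'_\sigma \subseteq \breve K_\sigma$. Using Theorem \ref{kwk} to identify indices, we have $EKOR_{K', w} = \Upsilon_{K'}^{-1}\big(\breve K'_\sigma(\breve I w \breve I)/\breve K'_\sigma\big)$, and likewise $EKOR_{K, x} = \Upsilon_K^{-1}\big(\breve K_\sigma(\breve I x \breve I)/\breve K_\sigma\big)$ for each $x \in {}^K\tilde W$. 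Disjointness of the union on the right-hand side of the proposition is then automatic from the definition of EKOR strata as fibers of $\upsilon_K$, so the task reduces to two set-theoretic inclusions in $Sh_K$.

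For the inclusion $\pi_{K', K}(EKOR_{K', w}) \subseteq \sqcup_{x \in \Sigma_K(w)} EKOR_{K, x}$, I would take $y \in EKOR_{K', w}$ and use the commuting square to place $\Upsilon_K(\pi_{K', K}(y)) = p(\Upsilon_{K'}(y))$ in the image $p\big(\breve K'_\sigma(\breve I w \breve I)/\breve K'_\sigma\big) \subseteq \breve K_\sigma(\breve I w \breve I)/\breve K_\sigma$. By Proposition \ref{new} the latter equals $\sqcup_{x \in \Sigma_K(w)} \breve K_\sigma(\breve I x \breve I)/\breve K_\sigma$, so $\pi_{K', K}(y)$ lands in $EKOR_{K, x}$ for a unique $x \in \Sigma_K(w)$.

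For the reverse inclusion, fix $x \in \Sigma_K(w)$ and $z \in EKOR_{K, x}$. Since $\breve K_\sigma(\breve I x \breve I) \subseteq \breve K_\sigma(\breve I w \breve I)$, the class $\Upsilon_K(z)$ admits a representative $g \in \breve I w \breve I$. Set $y := [g]_{K'} \in \breve K'_\sigma(\breve I w \breve I)/\breve K'_\sigma$; by construction $p(y) = \Upsilon_K(z)$. Since $w \in \Adm(\{\mu\})^{K'}$, Corollary \ref{imUps} applied to $K'$ guarantees that $\breve K'_\sigma(\breve I w \breve I)/\breve K'_\sigma \subseteq \Im(\Upsilon_{K'})$, so $y \in \Im(\Upsilon_{K'})$. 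Axiom \ref{ax Y}~c) (applied with the smaller parahoric $K'$) then asserts surjectivity of $\pi_{K', K}\colon \Upsilon_{K'}^{-1}(y) \to \Upsilon_K^{-1}(p(y))$, from which I extract a preimage $z' \in \Upsilon_{K'}^{-1}(y)$ of $z$. Since $y \in \breve K'_\sigma(\breve I w \breve I)/\breve K'_\sigma$, this $z'$ sits in $EKOR_{K', w}$, as required. The ``in particular'' clause is then immediate: for $w \in {}^K\tilde W$, Proposition \ref{new} forces $\Sigma_K(w) = \{w\}$.

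The main obstacle will be the reverse inclusion, which depends essentially on a combination of ingredients: Corollary \ref{imUps} (built on Axiom \ref{ax Y}~b)) is needed to certify that the lifted class $y$ actually lies in $\Im(\Upsilon_{K'})$, while Axiom \ref{ax Y}~c) is needed to lift $z$ through $\pi_{K', K}$ while keeping it over $y$. Without either piece of Axiom \ref{ax Y}, the equality on the right would degrade to a one-sided containment.
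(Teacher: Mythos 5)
Your proof is correct and follows essentially the same route as the paper's: commutativity of $\Upsilon$ with $\pi_{K',K}$ from Axiom \ref{ax Y}~a), the decomposition $\breve K_\sigma(\breve I w\breve I)=\sqcup_{x\in\Sigma_K(w)}\breve K_\sigma(\breve I x\breve I)$ from Proposition \ref{new}, and Axiom \ref{ax Y}~b), c) to lift a point of $EKOR_{K,x}$ to $EKOR_{K',w}$. Your invocation of Corollary \ref{imUps} in place of Axiom \ref{ax Y}~b) is merely a more explicit way of saying the same thing (the paper's second equality in the displayed chain is exactly what Corollary \ref{imUps} packages).
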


\begin{proof}
Consider the following commutative diagram
\[\xymatrix{
 Sh_{K'}  \ar[r]^-{\Upsilon_{K'}} \ar[d]_-{\pi_{K', K}} & G(\breve \BQ_p)/\breve K'_\sigma \ar[r] \ar[d]^-{p_{K', K}} & G(\breve \BQ_p)/\breve K'_\sigma (\breve K_1 \times \breve K_1) \ar[r] \ar[d] & G(\breve \BQ_p)/\breve K'_\sigma (\breve K'_1 \times \breve K'_1) \\ 
 Sh_{K}  \ar[r]^-{\Upsilon_K} & G(\breve \BQ_p)/\breve K_\sigma \ar[r] & G(\breve \BQ_p)/\breve K_\sigma (\breve K_1 \times \breve K_1) , & }
\] where $p_{K', K}$ is the natural projection map. 

We have \begin{align*} \Upsilon_K \circ \pi_{K', K}(EKOR_{K', w}) &=p_{K', K} \circ \Upsilon_{K'} (EKOR_{K', w})=p_{K', K}(\breve K'_{\sigma} (\breve I w \breve I)/\breve K'_{\sigma}) \\ &=\breve K_{\sigma} (\breve I w \breve I)/\breve K_{\sigma}=\sqcup_{x \in \Sigma_K(w)} \breve K_{\sigma} (\breve I x \breve I)/\breve K_{\sigma} . 
\end{align*}
Here the first equality follows from the commutativity, the second equality follows from Axiom \ref{ax Y} (b), the third equality follows from the definition of $p_{K', K}$ and the last equality follows from Proposition \ref{new}. Therefore $$\pi_{K', K}(EKOR_{K', w}) \subset \Upsilon_K^{-1}(\breve K_{\sigma} (\breve I w \breve I)/\breve K_{\sigma})=\sqcup_{x \in \Sigma_K(w)} EKOR_{K, x}.$$ 

On the other hand, for any $x \in \Sigma_K(w)$ and $p \in EKOR_{K, x}$, the image of $p$ under $\Upsilon_K$ lies in $ \breve K_\sigma (\breve I x \breve I)/\breve K_\sigma \subset p_{K', K}(\breve K'_{\sigma} (\breve I w \breve I)/\breve K'_{\sigma})$. By Axiom \ref{ax Y} (b) \& (c), there exists $p' \in EKOR_{K', w}$ such that $p=\pi_{K', K}(p')$. The proposition is proved. 
\end{proof}

\

Combining Theorem \ref{nonempty-KR} with the ``in particular'' part of Proposition \ref{kk-compatible}\footnote{In fact, we only use here the inclusion $\subseteq$ in Proposition \ref{kk-compatible}. Thus Corollary \ref{6.9} does not rely on Axiom \ref{ax Y} b), c).}, we obtain the following corollary. 

\begin{corollary}\label{6.9}
Let $X_K$ be a geometric connected component of $Sh_K$. For any parahoric $K$, 
$$\upsilon_K(  X_K  )=\Adm(\{\mu\})^K \cap {}^K \tilde W.$$
{\rm In other words, any geometric connected component of $Sh_K$ intersects any EKOR stratum (as their indices run through  their natural range, i.e.,  $\Adm(\{\mu\})^K \cap {}^K \tilde W$.)}
\end{corollary}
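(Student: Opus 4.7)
The inclusion $\upsilon_K(X_K)\subseteq \Adm(\{\mu\})^K\cap {}^K\tilde W$ is already part of the discussion preceding the definition of EKOR strata (it uses Proposition \ref{incl} and Theorem \ref{kwk}), so the task is to establish the reverse inclusion. The plan is to reduce to the Iwahori case, where the EKOR stratification coincides with the KR stratification, and then invoke the known non-emptiness Theorem \ref{nonempty-KR}.

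Concretely, fix $x\in \Adm(\{\mu\})^K\cap {}^K\tilde W$. By Axiom \ref{ax funct}, the morphism $\pi_{I,K}\colon Sh_I\to Sh_K$ is surjective, so we may pick a geometric connected component $X_I$ of $Sh_I$ whose image under $\pi_{I,K}$ is contained in $X_K$. Since $W_I$ is trivial we have $\Adm(\{\mu\})^I\cap {}^I\tilde W=\Adm(\{\mu\})$, and by Corollary \ref{natI} the map $\upsilon_I$ coincides with $\lambda_I$ under the bijection ${\rm nat}_I$, so $EKOR_{I,x}=KR_{I,x}$. In particular $x\in \Adm(\{\mu\})$, and Theorem \ref{nonempty-KR} applied to $I$ yields
\[
KR_{I,x}\cap X_I\;=\;EKOR_{I,x}\cap X_I\;\neq\;\emptyset.
\]

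To push this non-emptiness from Iwahori level to level $K$, we use the ``in particular'' part of Proposition \ref{kk-compatible} (only the inclusion $\subseteq$ is needed, hence Axiom \ref{ax Y} b), c) plays no role here): since $x\in \Adm(\{\mu\})^K\cap {}^K\tilde W$, we have $\pi_{I,K}(EKOR_{I,x})\subseteq EKOR_{K,x}$. Combined with $\pi_{I,K}(X_I)\subseteq X_K$, any point of $EKOR_{I,x}\cap X_I$ maps into $EKOR_{K,x}\cap X_K$, which is therefore non-empty. This gives $x\in \upsilon_K(X_K)$ as required.

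The argument is essentially a diagram chase; the only substantive ingredients are (a) the transitivity/surjectivity of the transition maps $\pi_{K',K}$, (b) the Iwahori case of the KR non-emptiness theorem, and (c) the compatibility of EKOR strata with change of parahoric at the level of distinguished coset representatives. There is no serious obstacle, since all three ingredients have been assembled earlier in the section.
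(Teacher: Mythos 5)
Your approach matches the paper's: combine the Iwahori case of Theorem \ref{nonempty-KR} (where $EKOR_{I,x}=KR_{I,x}$ by Corollary \ref{natI}) with the inclusion $\subseteq$ in the ``in particular'' part of Proposition \ref{kk-compatible}, and push forward along $\pi_{I,K}$. You also correctly observe that only the $\subseteq$ direction is needed, so Axiom \ref{ax Y} b), c) play no role---the paper records exactly this observation in a footnote attached to the corollary.

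There is one unjustified step: the claim ``In particular $x\in\Adm(\{\mu\})$'' does not follow from the sentences before it. What you are given is $x\in\Adm(\{\mu\})^K\cap{}^K\tilde W=W_K\Adm(\{\mu\})W_K\cap{}^K\tilde W$, and a priori this could be strictly larger than $\Adm(\{\mu\})\cap{}^K\tilde W$. The identity $\Adm(\{\mu\})^K\cap{}^K\tilde W=\Adm(\{\mu\})\cap{}^K\tilde W$ is precisely Theorem \ref{comp}, a non-trivial result imported from \cite{H2}, and you need to invoke it here; without it, Theorem \ref{nonempty-KR} does not apply to $x$ at the Iwahori level, since $KR_{I,x}$ has natural index set $\Adm(\{\mu\})$. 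The paper spells out this consequence of Theorem \ref{comp} in the remark immediately preceding Proposition \ref{kk-compatible}, so this is a citation gap rather than a missing idea, but as written the inference is a non sequitur.
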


\subsection{Closure relation}

Following \cite[\S 4]{He-07}, we introduce a partial order on ${}^K \tilde W$. 
Let $w, w' \in {}^K \tilde W$, we write $w' \preceq_{K, \sigma} w$ if there exists $x \in W_K$ such that $x w' \sigma(x)^{-1} \le w$. By \cite[4.7]{He-07}, $\preceq_{K, \sigma}$ gives a partial order on ${}^K \tilde W$. 

\begin{remark}
(1) If $w, w' \in {}^K \tilde W$ with $w' \le w$, then $w' \preceq_{K, \sigma} w$. However, the converse is not true. In other words, the partial order $\preceq_{K, \sigma}$ is a refinement of the restriction of the Bruhat order to ${}^K \tilde W$. 

(2) A more systematic view of the partial orders $\preceq_\sigma$ and $\preceq_{K, \sigma}$ is to use  minimal length elements. Let $\mathcal O, \mathcal O'$ be two $\sigma$-conjugacy classes that contain some $\sigma$-straight elements (resp. two $(W_K)_\sigma$-orbits on $\tilde W$ that contain some elements of ${}^K \tilde W$). We say that $\mathcal O' \preceq \mathcal O$ (resp. $\mathcal O' \preceq_{K, \sigma} \mathcal O$) if for some (or equivalently, any) minimal length element $w \in \mathcal O$, there exists a minimal length element $w' \in \mathcal O'$ such that $w' \le w$. These definitions coincide with the previous ones on $\sigma$-straight elements (resp. on ${}^K \tilde W$) since  $\sigma$-straight elements (resp. the elements in ${}^K \tilde W$) are minimal in their conjugacy classes (resp. their $(W_K)_\sigma$-orbits).
\end{remark}

Now we prove that the closure relation of the EKOR strata is given by this new partial order. 

\begin{theorem}\label{EKORlocclosed}
Let $K$ be a parahoric subgroup and $x \in \Adm(\{\mu\})^K \cap {}^K \tilde W$. Then $EKOR_{K, x}$ is locally closed and the closure of $EKOR_{K, x}$ is 
$$
\ov{EKOR_{K, x}}=\sqcup_{x' \in {}^K \tilde W, x' \preceq_{K, \sigma} x} EKOR_{K, x'} .
$$ 
\end{theorem}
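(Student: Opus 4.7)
The plan is to reduce to the Iwahori level, where the EKOR stratification coincides with the KR stratification by Corollary \ref{natI}, exploit the KR closure relations implied by Axiom \ref{ax locmod}, and push the result forward along the proper morphism $\pi_{I,K}$ of Axiom \ref{ax funct}, reading off the image stratum-by-stratum via Proposition \ref{kk-compatible}. Note first that by Theorem \ref{comp}, $x\in\Adm(\{\mu\})^K\cap{}^K\tilde W$ lies in $\Adm(\{\mu\})$, so $KR_{I,x}$ is a genuine nonempty KR stratum at Iwahori level. The ``in particular'' clause of Proposition \ref{kk-compatible} with $K'=I$ gives $\pi_{I,K}(KR_{I,x})=EKOR_{K,x}$. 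Since $\pi_{I,K}$ is proper (hence closed),
\[
\overline{EKOR_{K,x}}=\pi_{I,K}\bigl(\overline{KR_{I,x}}\bigr).
\]
Axiom \ref{ax locmod} at Iwahori level (where $W_I=\{1\}$) identifies $\overline{KR_{I,x}}$ with $\bigsqcup_{y\le x}KR_{I,y}$ for the Bruhat order on $\tilde W$. Applying Proposition \ref{kk-compatible} term-by-term yields
\[
\overline{EKOR_{K,x}}=\bigcup_{y\le x}\;\bigsqcup_{x'\in\Sigma_K(y)}EKOR_{K,x'}.
\]

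The crux is the combinatorial identity
\[
\bigcup_{y\le x}\Sigma_K(y)\;=\;\{x'\in{}^K\tilde W\mid x'\preceq_{K,\sigma}x\}.
\]
For $\supseteq$: if $u\in W_K$ witnesses $z:=ux'\sigma(u)^{-1}\le x$, then, lifting $u$ to $\dot u\in\breve K$, one has $\breve I z\breve I\subset\breve K_\sigma(\breve I x'\breve I)$ and symmetrically $\breve I x'\breve I\subset\breve K_\sigma(\breve I z\breve I)$, so $x'\in\Sigma_K(z)$ by Proposition \ref{new}, and one takes $y=z$. For $\subseteq$: given $x'\in\Sigma_K(y)$ with $y\le x$, I unroll the inductive description of $\Sigma_K$ after Proposition \ref{new}: at each step one replaces $w$ by $sw\sigma(s)$ (no change to the orbit) or drops to $sw$ (via Lemma \ref{new1}), and at the base one lands at $ux'$ with $u\in W_{I(J,x',\sigma)}\subset W_K$. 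Tracking how Bruhat comparability with $x$ propagates through these steps and invoking the minimal-length characterization of $\preceq_{K,\sigma}$ in the Remark following its definition (together with the well-behavedness of minimal-length elements in $(W_K)_\sigma$-orbits used there, cf.\ the footnote referring to \cite[Theorem 3.8]{HN}) produces a $u\in W_K$ with $ux'\sigma(u)^{-1}\le x$.

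Once this identity is in hand, local closedness is formal: the closure formula applied to each $x'\prec_{K,\sigma}x$ shows that $\bigcup_{x'\prec_{K,\sigma}x}\overline{EKOR_{K,x'}}$ is a \emph{finite} union of closed subsets, hence closed in $\overline{EKOR_{K,x}}$, and its complement there is precisely $EKOR_{K,x}$. The hard part will be the descent leg of Step 3, specifically the case $\ell(sw\sigma(s))<\ell(w)$ in the recursion for $\Sigma_K$: this is where $\Sigma_K(y)$ can acquire elements \emph{outside} the $(W_K)_\sigma$-orbit of $y$, and matching this with the orbit-based definition of $\preceq_{K,\sigma}$ is where the deep $\sigma$-conjugacy input from Section~\ref{s: N} (in particular the minimal-length theory behind Theorem \ref{posets}) is actually used.
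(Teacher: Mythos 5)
Your overall strategy mirrors the paper's: reduce to the Iwahori level where EKOR equals KR, use the properness of $\pi_{I,K}$ to transfer the KR closure relation, and translate the result into a combinatorial statement about the sets $\breve K_\sigma(\breve I w\breve I)$. The paper also computes $\overline{EKOR_{K,x}} = \pi_{I,K}(\overline{KR_{I,x}}) = \cup_{w\le x}\pi_{I,K}(KR_{I,w})$ and then identifies $\cup_{w\le x}\breve K_\sigma(\breve I w\breve I)/\breve K_\sigma$ with $\sqcup_{x'\preceq_{K,\sigma}x}\breve K_\sigma(\breve I x'\breve I)/\breve K_\sigma$. But at that point the paper simply \emph{cites} \cite[Proof of Theorem 2.5]{He-11} for that identification, while you attempt to re-derive the equivalent combinatorial identity $\cup_{y\le x}\Sigma_K(y)=\{x'\in{}^K\tilde W\mid x'\preceq_{K,\sigma}x\}$ from Proposition~\ref{new} and the recursion for $\Sigma_K$. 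That derivation is exactly the non-trivial content, and your sketch of it is not sound.

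Concretely, in the $\supseteq$ direction you claim that for $z = u x'\sigma(u)^{-1}$ with $u\in W_K$ one has \emph{both} inclusions $\breve I z\breve I\subset\breve K_\sigma(\breve I x'\breve I)$ and $\breve I x'\breve I\subset\breve K_\sigma(\breve I z\breve I)$. Together these would force $\breve K_\sigma(\breve I z\breve I)=\breve K_\sigma(\breve I x'\breve I)$, i.e.\ $\Sigma_K(z)=\{x'\}$, which is false whenever $\ell(z)>\ell(x')$ (one sees from Lemma~\ref{new1}(2) that each length drop can add new pieces). The correct statement is only the one-sided inclusion $\breve K_\sigma(\breve I x'\breve I)\subset\breve K_\sigma(\breve I z\breve I)$, i.e.\ $x'\in\Sigma_K(z)$, which still needs an argument — you would need to track a chain of elementary moves from $z$ down to $x'$ inside the $(W_K)_\sigma$-orbit, invoking Lemma~\ref{new1} at each step. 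For the $\subseteq$ direction you explicitly defer to the ``minimal-length theory'' and note this is ``where the deep $\sigma$-conjugacy input \ldots\ is actually used,'' which is an acknowledgment rather than a proof; in particular, passing through a Bruhat-smaller $sw$ in the $\ell(sw\sigma(s))<\ell(w)$ branch and then concluding $\preceq_{K,\sigma}$-comparability of the output of $\Sigma_K$ with $x$ is precisely the claim that has to be proved. Until the combinatorial identity is established (either by carefully carrying out the induction you outline, or by citing \cite{He-11} as the paper does), the closure formula, and therefore the local-closedness deduced from it, remains unproven.
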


\begin{remark}
If $x' \in {}^K \tilde W$ with $x' \preceq_{K, \sigma} x$, then by definition, there exists an element in the $(W_K)_\sigma$-orbit of $x'$ that is less than or equal to $x$ in the Bruhat order. Note that $\Adm(\{\mu\})^K$ is closed under the Bruhat order and stable under the action of $W_K \times W_K$. Thus, if $x \in \Adm(\{\mu\})^K \cap {}^K \tilde W$, then also $x' \in \Adm(\{\mu\})^K \cap {}^K \tilde W$ (the index set of EKOR strata of level $K$). 
\end{remark}

\begin{proof}
Since $\sqcup_{w \in \tilde W, w \le x} KR_{I, w}$  is the closure of $KR_{I, x}$ and the map $\pi_{I, K}$ is proper, $$\pi_{I, K}(\sqcup_{w \in \tilde W, w \le x} KR_{I, w})=\cup_{w \in \tilde W, w \le x} \pi_{I, K} (KR_{I, w}) $$ is the closure of $\pi_{I, K}(KR_{I, x})=EKOR_{K, x}$. 
We have 
\begin{align*} \cup_{w \in \tilde W, w \le x} \Upsilon_K \circ \pi_{I, K} (KR_{I, w}) & =\cup_{w \in \tilde W, w \le x}\, p_{I, K} \circ \Upsilon_I (KR_{I, w})=\cup_{w \in \tilde W, w \le x}\, p_{I, K} (\breve I w \breve I/\breve I_\sigma )\\ &=\cup_{w \in \tilde W, w \le x} \breve K_\sigma (\breve I w \breve I)/\breve K_\sigma.
\end{align*}
By \cite[Proof of Theorem 2.5]{He-11}, this equals  $\sqcup_{x' \in {}^K \tilde W, x' \preceq_{K, \sigma} x} \breve K_\sigma (\breve I x \breve I)/\breve K_\sigma$. Therefore the set 
\begin{align*} \sqcup_{x' \in {}^K \tilde W, x' \preceq_{K, \sigma} x} EKOR_{K, x'} &=\sqcup_{x' \in {}^K \tilde W, x' \preceq_{K, \sigma} x} \Upsilon_K^{-1} \big(\breve K_\sigma (\breve I w \breve I)/\breve K_\sigma\big) \\ &=\Upsilon_K^{-1} \big(\sqcup_{x' \in {}^K \tilde W, x' \preceq_{K, \sigma} x} \breve K_\sigma (\breve I x \breve I)/\breve K_\sigma\big)
\end{align*} is closed and is the closure of $EKOR_{K, x}$. 
\end{proof}

\subsection{EKOR strata and Newton strata} In this subsection, we discuss the relation between  EKOR strata and Newton strata. To do this, we need the following result which is stronger than Theorem \ref{str-bgmu} c).  

\begin{theorem}\label{strK}
For any parahoric $K$, the map 
$$
\Adm(\{\mu\})_{\sstr} \cap {}^K \tilde W \to B(G, \{\mu\})
$$
 is surjective. 
\end{theorem}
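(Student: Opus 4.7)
The plan is to upgrade Theorem \ref{str-bgmu} c) by moving a $\sigma$-straight representative of $[b]$ into ${}^K\tilde W$ via partial conjugation, and then to use Theorem \ref{comp} to recover admissibility. Fix $[b]\in B(G,\{\mu\})$. By Theorem \ref{str-bgmu} c), pick a $\sigma$-straight $w\in\Adm(\{\mu\})$ with $\Psi([w])=[b]$, where $[w]$ denotes the $\tilde W$-$\sigma$-conjugacy class of $w$. Let $J\subset\tilde{\mathbb S}$ be the set of simple reflections generating $W_K$; since $K$ is defined over $\BQ_p$, we have $\sigma(J)=J$. Apply Proposition \ref{partial-conj} with this $J$ to produce $x\in{}^J\tilde W={}^K\tilde W$ and $u\in W_{I(J,x,\sigma)}$ with $w\to_{J,\sigma}ux$.

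Each step of the chain $w\to_{J,\sigma}ux$ is a $W_K$-$\sigma$-conjugation and weakly decreases length. Since $w$ is $\sigma$-straight and hence of minimum length in $[w]$, the chain is length-preserving. In particular $ux$ has the same length, Newton point, and $\tilde W$-$\sigma$-conjugacy class as $w$; so $ux$ is itself $\sigma$-straight and lies in $W_K w W_K\subset\Adm(\{\mu\})^K$.

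The key technical step is to prove that $u=1$, so that $x=ux$ itself is the desired representative. Since $x\in{}^J\tilde W$ and $u\in W_J$, we have $\ell(ux)=\ell(u)+\ell(x)$. The $\Ad(x)\sigma$-stability of $W_{I(J,x,\sigma)}$ built into the definition of $I(J,x,\sigma)$ lets one write $(ux\sigma)^n=v_n\cdot(x\sigma)^n$ with $v_n\in W_{I(J,x,\sigma)}$; since $v_n$ stays bounded in the (finite parabolic) subgroup $W_{I(J,x,\sigma)}$, its contribution to the asymptotic translation part of the iterates is negligible, yielding $\nu(ux)=\nu(x)$. Combined with the $\sigma$-straightness of $ux$ and the general inequality $\ell(x)\ge\langle\nu(x),2\rho\rangle$, this gives
$$
\ell(u)+\ell(x)=\ell(ux)=\langle\nu(ux),2\rho\rangle=\langle\nu(x),2\rho\rangle\le\ell(x),
$$
so $\ell(u)=0$ and $u=1$.

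Once $u=1$, the element $x$ is $\sigma$-straight, lies in $W_K w W_K\cap{}^K\tilde W\subset\Adm(\{\mu\})^K\cap{}^K\tilde W$, and satisfies $\Psi([x])=\Psi([w])=[b]$. By Theorem \ref{comp}, $\Adm(\{\mu\})^K\cap{}^K\tilde W=\Adm(\{\mu\})\cap{}^K\tilde W$, so $x\in\Adm(\{\mu\})_{\sstr}\cap{}^K\tilde W$ is a preimage of $[b]$, proving surjectivity. The main obstacle is the identity $\nu(ux)=\nu(x)$ underlying Step 3; everything else is a direct assembly of Theorems \ref{str-bgmu} c) and \ref{comp} with Proposition \ref{partial-conj}.
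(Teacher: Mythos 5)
Your proof is correct and follows the same route as the paper: start from a $\sigma$-straight $w\in\Adm(\{\mu\})$ representing $[b]$ via Theorem \ref{str-bgmu}~c), apply the partial conjugation Proposition \ref{partial-conj} to get $x\in{}^{K}\tilde W$ and $u\in W_{I(J,x,\sigma)}$ with $w\to_{J,\sigma}ux$, show $u=1$ by a length comparison, and finish with Theorem \ref{comp}. The only genuine difference is localized in the step $u=1$: the paper simply invokes \cite[Proposition~1.2]{He-x} to assert $(x\sigma)^N=(ux\sigma)^N=t^\lambda$ with $\lambda$ in the $W_0$-orbit of $N\nu_w$ and then compares $N\ell(x)\ge\ell(t^\lambda)=N\ell(w)$ against $\ell(w)\ge\ell(u)+\ell(x)$, whereas you re-derive the needed identity $\nu(ux)=\nu(x)$ directly, via $(ux\sigma)^n=v_n(x\sigma)^n$ with $v_n$ ranging in the finite group $W_{I(J,x,\sigma)}$. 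Your asymptotic argument is sound and can be made rigorous (for $n$ a common suitable multiple one gets $v_n=t^{n(\nu_{ux}'-\nu_x')}$, which can take only finitely many values, forcing $\nu_{ux}'=\nu_x'$), so this is a self-contained alternative to the citation rather than a gap; it is worth stating the finiteness argument explicitly rather than appealing to ``negligible contribution.''
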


\begin{proof}
 Let $[b] \in B(G, \{\mu\})$. By Theorem \ref{str-bgmu} c), there exists $w \in \Adm(\{\mu\})_{\sstr}$ with $w \in [b]$. 

Let $J \subset \tilde{\mathbb S}$ be the set of simple reflections in $W_K$. By Proposition \ref{partial-conj}, there exists $x \in {}^J \tilde W$, an element $u \in W_{I(J, x, \sigma)}$ and an element $v \in W_K$ such that $w=v (u x) \sigma(v)^{-1}$ and $\ell(u x) \le \ell(w)$. 

Now we regard $\sigma$ as an element in the semi-direct product $\tilde W \rtimes \sigma$. By \cite[Proposition 1.2]{He-x}, there exists $N \in \mathbb N$ such that $(x \sigma)^N=(u x \sigma)^N=t^\lambda$ and $(w \sigma)^N=t^{\lambda'}$, where $\lambda, \lambda'$ are in the $W_0$-orbit of $N \nu_w$. 

Then $N \ell(x) \ge \ell((x \sigma)^N)=\langle N \nu_w, 2 \rho \rangle=N \ell(w)$. So $\ell(x) \ge \ell(w)$. On the other hand, $\ell(w) \ge \ell(u x)=\ell(u)+\ell(x)$. Therefore $u=1$ and $\ell(w)=\ell(x)$. In particular, $x$ is a $\sigma$-straight element with $x \in [b]$ and $x \in W_K w W_K \subset \Adm(\{\mu\})^K$. Since $x \in \Adm(\{\mu\})^K \cap {}^K \tilde W$, we conclude that $x \in \Adm(\{\mu\})$ by Theorem \ref{comp}. Hence $x \in \Adm(\{\mu\})_{\sstr} \cap {}^K \tilde W$.
\end{proof}

\begin{theorem}\label{EKORsubN}
For any parahoric $K$ and any $[b] \in B(G, \{\mu\})$, there exists $x \in \Adm(\{\mu\})^K \cap {}^K \tilde W$ such that $$EKOR_{K, x} \subset \mathit S_{K, [b]}.$$ 
\end{theorem}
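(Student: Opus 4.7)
The plan is to realize a single EKOR stratum inside $\mathit S_{K,[b]}$ by producing a $\sigma$-straight representative of $[b]$ that already lies in ${}^K\tilde W$, and then tracing the containment through the commutative diagram of Axiom \ref{ax Y}(a). The mechanism is that $\sigma$-straightness forces the entire $\breve K$-$\sigma$-orbit of $\breve I x \breve I$ to sit in a single $\sigma$-conjugacy class of $G(\breve\BQ_p)$, while the EKOR stratum over $x$ maps by $\Upsilon_K$ precisely into that orbit.

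First I would invoke Theorem \ref{strK} to obtain a $\sigma$-straight $x \in \Adm(\{\mu\}) \cap {}^K\tilde W$ whose class under $\Psi$ is $[b]$. By Theorem \ref{comp}, any such $x$ automatically lies in $\Adm(\{\mu\})^K \cap {}^K\tilde W$, so $EKOR_{K,x}$ is indexed in its natural range. Next, for a geometric point $p \in EKOR_{K,x}$, the definition of $\upsilon_K$ gives $\Upsilon_K(p) \in \breve K_\sigma(\breve K_1 x \breve K_1)/\breve K_\sigma$, which by Theorem \ref{kwk}(a) coincides with $\breve K_\sigma(\breve I x \breve I)/\breve K_\sigma$.

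To finish, I would apply Theorem \ref{str-bgmu}(a): since $x$ is $\sigma$-straight, the whole double coset $\breve I x \breve I$ lies in the single $\sigma$-conjugacy class $[x] = [b]$ of $G(\breve\BQ_p)$; because $\breve K$-$\sigma$-conjugation preserves $\sigma$-conjugacy classes, the same is true of $\breve K_\sigma(\breve I x \breve I)$. Hence $d_K(\Upsilon_K(p)) = [b]$, and the commutative diagram in Axiom \ref{ax Y}(a) yields $\delta_K(p) = [b]$, that is, $p \in \mathit S_{K,[b]}$.

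I do not foresee any substantive obstacle: the theorem simply assembles Theorems \ref{strK}, \ref{comp}, \ref{kwk}(a), \ref{str-bgmu}(a) and the compatibility in Axiom \ref{ax Y}(a). The one point worth flagging is that one must invoke the refined Theorem \ref{strK} (with the ${}^K\tilde W$ constraint) rather than merely Theorem \ref{str-bgmu}(c), since without this refinement the chosen $\sigma$-straight representative need not index an EKOR stratum of level $K$ at all.
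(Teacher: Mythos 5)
Your proof is correct and follows essentially the same route as the paper: both produce a $\sigma$-straight $x\in\Adm(\{\mu\})_{\sstr}\cap{}^K\tilde W$ representing $[b]$ via Theorem~\ref{strK}, apply Theorem~\ref{str-bgmu}(a) to place $\breve K_\sigma(\breve I x\breve I)$ inside $[b]$, and conclude via $EKOR_{K,x}=\Upsilon_K^{-1}(\breve K_\sigma(\breve I x\breve I)/\breve K_\sigma)\subset\delta_K^{-1}([b])$. The only superfluous step is your invocation of Theorem~\ref{comp}: since $\Adm(\{\mu\})\subset\Adm(\{\mu\})^K=W_K\Adm(\{\mu\})W_K$ trivially, the containment $x\in\Adm(\{\mu\})^K\cap{}^K\tilde W$ follows from the easy inclusion alone, without the substance of that theorem.
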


\begin{proof}
By Theorem \ref{strK}, there exists $x \in \Adm(\{\mu\})_{\sstr} \cap {}^K \tilde W$ such that $x \in [b]$. Since $x$ is $\sigma$-straight, we deduce from  Theorem \ref{str-bgmu} a) that  $\breve K_\sigma (\breve I x \breve I) \subset [b]$. Therefore $EKOR_{K, x}=\Upsilon_K^{-1} (\breve K_\sigma (\breve I x \breve I)/\breve K) \subset \delta_K^{-1} ([b])=\mathit S_{K, [b]}$. 
\end{proof}

\begin{remarks} 1) For a general parahoric $K$, there is no KR stratum of level $K$ that is entirely contained in a given Newton stratum. 

2) For Shimura varieties of PEL type with hyperspecial level structure, the existence of an Ekedahl-Oort stratum in a given Newton stratum is proved by Viehmann/Wedhorn \cite[Theorem 1.5(1)]{V-W} and Nie \cite[Corollary 1.6]{Nie}.

\end{remarks}

\subsection{Finiteness of fibers}

\begin{proposition}\label{finite-fiber}
Let $K$ be a parahoric subgroup and $x \in {}^K \tilde W$. Then each fiber of the map $\pi_{I, K}: \breve I x \breve I/\breve I_\sigma \to \breve K_\sigma (\breve I x \breve I)/\breve K_\sigma$ is finite.
\end{proposition}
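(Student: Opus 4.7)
Fix $g \in \breve I x \breve I$, and let $Z = Z_{K,\sigma}(g) = \{k \in \breve K : kg\sigma(k)^{-1}=g\}$ denote its $\sigma$-centralizer in $\breve K$. By the Lang parameterization, the $\breve K_\sigma$-orbit of $g$ is identified with $\breve K/Z$ via $k \mapsto kg\sigma(k)^{-1}$, and the $\breve I_\sigma$-action on this orbit corresponds to left multiplication by $\breve I$. Hence the fiber of $\pi_{I,K}$ over $[\breve K_\sigma g]$ is
\[
  F \;\cong\; \breve I \backslash S / Z, \qquad S \;=\; \{k \in \breve K : kg\sigma(k)^{-1} \in \breve I x \breve I\}.
\]
Since $\breve K_1 \subset \breve I$, the natural projection induces $\breve I \backslash \breve K \cong \bar B \backslash L$, where $L=\breve K/\breve K_1$ is the (finite-dimensional) reductive quotient and $\bar B = \breve I/\breve K_1$ is a Borel. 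Thus $\breve I\backslash S$ sits inside the partial flag variety $\bar B\backslash L$.

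The plan is to bound $F$ via Bruhat decomposition in $\breve K$, then finish on the reductive quotient. Partition $\breve K = \bigsqcup_{w \in W_K} \breve I w \breve I$; since $W_K$ is finite, it suffices to show that, for each $w \in W_K$, the contribution $\breve I\backslash(S \cap \breve I w \breve I)/Z$ is finite. Writing $k = iwj$, the left $\breve I$-action absorbs $i$, and one checks that two parameters $j,j'\in\breve I$ yield the same class in $\breve I\backslash\breve I w\breve I$ exactly when $j' \in (\breve I \cap w^{-1}\breve I w)\cdot j$. The condition $kg\sigma(k)^{-1}\in \breve I x\breve I$ then becomes
\[
  w j g \sigma(j)^{-1}\sigma(w)^{-1} \in \breve I x \breve I,
\]
a condition which, using the affine Bruhat decomposition of $G(\breve\BQ_p)$ and the fact that $x$ is of minimal length in its coset $W_K x$ (so $x\in{}^K\tilde W$), cuts $\breve I w\breve I$ down to a locally closed subset whose projection to $\bar B\backslash L$ is constructible and finite-dimensional.

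Finally, apply Lang--Steinberg twice: once to the connected pro-unipotent radical $\breve K_1$ (to reduce the statement to the reductive quotient $L$), and once to $L$ itself (to see that the right action of $Z$ on the resulting subset of $\bar B\backslash L$ has finitely many orbits). Since each $w\in W_K$ contributes only finitely many classes, and $W_K$ itself is finite, $F$ is finite.

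\textbf{Main obstacle.} The delicate point is that the defining condition ``$kg\sigma(k)^{-1}\in \breve I x\breve I$'' does \emph{not} descend cleanly to $\bar k\in L$ when $x\notin W_K$: right multiplication by $\breve K_1$ may move $kg\sigma(k)^{-1}$ within the \emph{affine} Iwahori-double coset stratification while leaving its image in $L$ fixed. One therefore has to use the hypothesis $x\in{}^K\tilde W$ to ensure that, for each $w\in W_K$, the Bruhat-stratum contribution reduces to a condition involving only finitely many affine double cosets (in the spirit of the proof of Theorem~\ref{kwk} and Proposition~\ref{new}), before the Lang--Steinberg argument on $L$ can be invoked.
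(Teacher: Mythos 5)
Your proposal correctly identifies the fiber as $\breve I \backslash S / Z$ and has the right general instinct (reduce to the reductive quotient and invoke Lang--Steinberg), but it stops at exactly the point where the argument needs a new idea, and the plan you sketch to get past the ``Main obstacle'' is not viable as stated. Partitioning $\breve K = \bigsqcup_{w \in W_K} \breve I w \breve I$ and claiming that for each $w$ the condition $kg\sigma(k)^{-1}\in\breve I x\breve I$ cuts out a constructible, finite-dimensional subset of $\bar B\backslash L$ does not by itself give finiteness of $\breve I\backslash S/Z$, because $Z = Z_{K,\sigma}(g)$ is typically an infinite group and you have no control over the number of its orbits; you are implicitly asking Lang--Steinberg to do work it cannot do, since $\mathrm{Ad}(x)\circ\sigma$ need not induce a Frobenius on $\overline{\breve K}$ (it only preserves the type set $J'=I(J,x,\sigma)\subset J$, not $J$ itself). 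This is precisely the obstruction you flag, and you do not resolve it.

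The paper's proof sidesteps $Z$ entirely: it reformulates the claim as finiteness of the fibers of the map $f: \breve K\times_{\breve I}\breve I x\breve I \to \breve K_\sigma(\breve I x\breve I)$ (equivalently, of $\breve I\backslash S$ rather than $\breve I\backslash S/Z$), then makes the crucial reduction to the subparahoric $K'\subset K$ of type $I(J,x,\sigma)$, using the $G$-stable piece bijection $\breve K\times_{\breve K'}\breve K'_\sigma(\breve I x\breve I)\cong\breve K_\sigma(\breve I x\breve I)$ from \cite[Prop.~1.10]{H-gstable}. On $\breve K'$ the twist $\sigma'=\mathrm{Ad}(x)\circ\sigma$ genuinely is a Frobenius on the reductive quotient $\overline{\breve K'}$, so one can define the well-defined $\breve K'$-equivariant projection $p\colon\breve K' x\breve K'\to\overline{\breve K'}$, normalize the second coordinate via Lang's theorem for the Borel $\bar B\subset\overline{\breve K'}$, and conclude by finiteness of $(\overline{\breve K'})^{\sigma'}$. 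The missing step in your write-up is exactly this passage from $K$ to $K'$: without it, the condition does not descend to the reductive quotient, and your Lang--Steinberg appeal has no Frobenius to run on. If you import the reduction to $K'$ (and drop the quotient by $Z$, which is unnecessary and harder to control), the rest of your outline collapses onto the paper's argument.
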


\begin{proof}
We first reformulate the statement as follows. Define an  action of $\breve I$ on $\breve K \times \breve I x \breve I$ by $i \cdot (k, z)=(k i^{-1}, i z \sigma(i)^{-1})$. Let $\breve K \times_{\breve I} \breve I x \breve I$ be its quotient. The map $\breve K \times \breve I x \breve I \to \breve K_\sigma (\breve I x \breve I)$, $(k, z) \mapsto k z \sigma(k)^{-1}$ induces a map 
\begin{equation}\label{def f}
f: \breve K \times_{\breve I} \breve I x \breve I \to \breve K_\sigma (\breve I x \breve I).
\end{equation}
The statement of the proposition is equivalent to the statement that each fiber of $f$ is finite. 

Let $J$ be the set of simple reflections in $W_K$ and $K'\subset K$ be the standard parahoric subgroup corresponding to $I(J, x, \sigma)$.  Here we are using the notation introduced right after stating Proposition  \ref{new}. Define the quotient space $\breve K \times_{\breve K'} \breve K'_\sigma (\breve I x \breve I)$ in the same way as above. By \cite[Proposition 1.10]{H-gstable}, the map $(k, z) \mapsto k z \sigma(k)^{-1}$ induces a bijection $$\breve K \times_{\breve K'} \breve K'_\sigma (\breve I x \breve I) \cong \breve K_{\sigma} (\breve I x \breve I).$$
There is a natural bijection $$\breve K \times_{\breve K'} (\breve K' \times_{\breve I} \breve I x \breve I) \cong \breve K \times_{\breve I} \breve I x \breve I.$$ Let 
\begin{equation}\label{def pi}
\pi: \breve K' \times_{\breve I} \breve I x \breve I \to \breve K'_\sigma (\breve I x \breve I)
\end{equation}
 be the map induced by $(k, z) \mapsto k z \sigma(k)^{-1}$. We have the following commutative diagram
\[\xymatrix{
\breve K \times_{\breve K'} (\breve K' \times_{\breve I} \breve I x \breve I) \ar[r]^-{(id, \pi)} \ar[d]_-{\cong} & \breve K \times_{\breve K'} \breve K'_\sigma (\breve I x \breve I) \ar[d]_-{\cong} \\ 
\breve K \times_{\breve I} \breve I x \breve I \ar[r]^-f & \breve K_\sigma (\breve I x \breve I) .}
\]
It remains to prove that each fiber of \eqref{def pi}  is finite.

Let $\breve K'_1$ be the pro-unipotent radical of $\breve K'$ and $\overline{\breve K'}=\breve K'/\breve K'_1$ be the reductive quotient of $\breve K'$. Let $\bar B$ the the image of $\breve I$ in $\overline{\breve K'}$. For any $k \in \breve K'$, we denote by $\bar k$ its image in $\overline{\breve K'}$. Note that $\Ad(x) \sigma(I(J, x,\sigma))=I(J, x, \sigma)$. Thus $\sigma':=\Ad(x) \circ \sigma$ gives a Frobenius morphism on $\overline{\breve K'}$. 

We have $\breve K'_\sigma (\breve I x \breve I) \subset \breve K' x \breve K'$.\footnote{In fact, by Lang's theorem for $\overline{\breve K'}$,  equality holds. But we do not need this fact here.} Define a map 
\begin{equation}\label{def p}
p: \breve K' x \breve K' \to \overline{\breve K'} , \quad k_1 x k_2\mapsto \bar k_1 \sigma' (\bar k_2) .
\end{equation}
 It is easy to see that this map is well-defined. Define the action of $\breve K'$ on $\overline{\breve K'}$ by $k \cdot \bar k'=\bar k \bar k' \sigma'(\bar k)^{-1}$. Then the map \eqref{def p}  is $\breve K'$-equivariant. 

The composition $p \circ \pi\colon  \breve K' \times_{\breve I} \breve I x \breve I\to \overline{\breve K'}$ is given by $(k, i x i') \mapsto \bar k \bar i (x \bar i' x^{-1})\sigma'(\bar k)^{-1}$. Note that $\bar i (x \bar i' x^{-1}) \in \bar B$. By Lang's theorem for $\bar B$, there exists $i_1 \in \breve I$ such that $\bar i_1 \bar i (x \bar i' x^{-1}) \sigma'(\bar i_1)^{-1}=1 \in \bar B$. In other words, each element in $\breve K' \times_{\breve I} \breve I x \breve I$ is represented by $(k, i x i')$ for some $k \in \breve K'$, $i, i' \in \breve I$ with $\bar i (x \bar i' x^{-1})=1$. 

Let $(k, i x i'), (k_1, i_1 x i'_1) \in \breve K' \times \breve I x \breve I$ with $\bar i (x \bar i' x^{-1})=\bar i_1 (x \bar i'_1 x^{-1})=1$. Suppose that $\pi((k, i x i'))=\pi((k_1, i_1 x i'_1))$. Then $p \pi((k, i x i'))=\bar k \sigma'(\bar k)^{-1}=\bar k_1 \sigma'(\bar k_1)^{-1}=p \pi((k_1, i_1 x i'_1))$. In other words, $\bar k_1^{-1} \bar k \in (\overline{\breve K'})^{\sigma'}$. Since $(\overline{\breve K'})^{\sigma'}$ is a finite group, each fiber of $\pi$ has only finitely many choices of $k \in \breve K'$ (up to right multiplication by $\breve I$). Therefore, each fiber of $\pi$ is a finite set.
\end{proof}

Combining Proposition \ref{finite-fiber} with the Axiom \ref{ax Y}, c)  (the finiteness part), we have 

\begin{theorem}\label{pionEKOR}
Let $K$ be a parahoric subgroup and $x \in \Adm(\{\mu\})^K \cap {}^K \tilde W$.  Then 
$${\pi_{I, K} }_{| KR_{I, x}}\colon KR_{I, x}\to EKOR_{K, x}
$$ 
is a finite morphism. In particular, $\dim EKOR_{K, x}=\dim KR_{I, x}$. 
\end{theorem}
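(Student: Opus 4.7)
The plan is to establish three things in turn: (1) the restriction $\pi_{I,K}|_{KR_{I,x}}$ is surjective onto $EKOR_{K,x}$, (2) it has finite fibers, and (3) it is actually proper, so that (1)--(3) combine to yield a finite morphism, whence the dimension equality follows automatically.

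Surjectivity is almost immediate from Proposition \ref{kk-compatible} applied with $K'=I$: since $x\in{}^K\tilde W$ we have $\Sigma_K(x)=\{x\}$ (the ``moreover'' part of Proposition \ref{new}), and since the Iwahori EKOR strata coincide with the KR strata we have $EKOR_{I,x}=KR_{I,x}$, giving $\pi_{I,K}(KR_{I,x})=EKOR_{K,x}$. For finite fibers, I would exploit the commutative diagram from Axiom \ref{ax Y} a). Given $p\in EKOR_{K,x}$, put $y=\Upsilon_K(p)\in \breve K_\sigma(\breve I x\breve I)/\breve K_\sigma$. Any preimage $q\in KR_{I,x}\cap\pi_{I,K}^{-1}(p)$ has $\Upsilon_I(q)=z\in\breve Ix\breve I/\breve I_\sigma$ projecting to $y$ under the natural map $\breve Ix\breve I/\breve I_\sigma\to\breve K_\sigma(\breve Ix\breve I)/\breve K_\sigma$. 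Proposition \ref{finite-fiber} (the group-theoretic heart of the matter) gives only finitely many such $z$, and Axiom \ref{ax Y} c) applied to $I\subset K$ says each map $\Upsilon_I^{-1}(z)\to\Upsilon_K^{-1}(y)$ induced by $\pi_{I,K}$ has finite fibers, so the fiber over $p$ is finite. The total preimage is therefore a finite union of finite sets.

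The remaining step, properness of the restricted map, will be the main obstacle, because $KR_{I,x}$ is only locally closed in $Sh_I$. The plan is to show that $KR_{I,x}$ is actually closed inside $\pi_{I,K}^{-1}(EKOR_{K,x})$; since the latter is the base change of the proper map $\pi_{I,K}$ (Axiom \ref{ax funct}) along the inclusion of the locally closed subvariety $EKOR_{K,x}$ (locally closed by Theorem \ref{EKORlocclosed}), the closedness would give properness of $\pi_{I,K}|_{KR_{I,x}}$. For this I would compute
\[
\overline{KR_{I,x}}\cap \pi_{I,K}^{-1}(EKOR_{K,x})= \bigsqcup_{w\le x}\bigl(KR_{I,w}\cap \pi_{I,K}^{-1}(EKOR_{K,x})\bigr),
\]
using Axiom \ref{ax locmod} (smoothness of $\lambda_I$ and the closure relation $\CO_w\subset\ov\CO_x$ iff $w\le x$) to identify $\overline{KR_{I,x}}$ with $\sqcup_{w\le x}KR_{I,w}$. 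The claim is that for $w<x$ strictly, the intersection is empty. By Proposition \ref{kk-compatible} this reduces to showing $x\notin\Sigma_K(w)$. But a straightforward strong induction on the recursive description of $\Sigma_K$ following Proposition \ref{new} shows that every $x'\in\Sigma_K(w)$ satisfies $\ell(x')\le\ell(w)$; since $w<x$ forces $\ell(w)<\ell(x)$, we conclude $x\notin\Sigma_K(w)$ as required. This key combinatorial lemma is the technical pinch point of the argument.

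With properness and finite fibers established, $\pi_{I,K}|_{KR_{I,x}}\colon KR_{I,x}\to EKOR_{K,x}$ is a finite morphism. Since it is moreover surjective, the dimensions agree, yielding $\dim EKOR_{K,x}=\dim KR_{I,x}$. I expect the verification of the finite-fiber statement to be completely clean once Proposition \ref{finite-fiber} and Axiom \ref{ax Y} c) are in hand, and the length bound on $\Sigma_K$ to be the only delicate input in isolating $KR_{I,x}$ as a clopen component of the preimage.
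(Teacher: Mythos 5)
Your argument is correct, and it actually does more than the paper's stated justification, which is the single line ``Combining Proposition \ref{finite-fiber} with the Axiom \ref{ax Y}, c) (the finiteness part), we have [the theorem].'' That line directly yields set-theoretic surjectivity and finiteness of fibers (your steps (1) and (2), which replicate the paper's intent exactly), but for the \emph{scheme-theoretic} claim ``finite morphism'' one also needs properness, since $KR_{I,x}$ is only locally closed in $Sh_I$ and the restriction of a proper map to a locally closed subset need not be proper. This is precisely the gap you identify and close. Your combinatorial input --- that every $x'\in\Sigma_K(w)$ satisfies $\ell(x')\le\ell(w)$, hence $w<x$ forces $x\notin\Sigma_K(w)$ --- is correct: in the recursive description following Proposition \ref{new}, the base case $\Sigma_K(ux)=\{x\}$ with $\ell(x)\le\ell(ux)$ is clear, the same-length step preserves the maximal length, and the length-decreasing step splits into $\Sigma_K(sw\sigma(s))\cup\Sigma_K(sw)$ with both $\ell(sw\sigma(s))$ and $\ell(sw)$ strictly less than $\ell(w)$. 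Combined with Theorem \ref{kwk} (disjointness of the $\breve K_\sigma$-stable pieces) and the identification $\pi_{I,K}^{-1}(EKOR_{K,x})=\bigsqcup_{\{w\,:\,x\in\Sigma_K(w)\}}KR_{I,w}$, this shows that $\overline{KR_{I,x}}\cap\pi_{I,K}^{-1}(EKOR_{K,x})=KR_{I,x}$, giving the closed immersion into a proper base-changed map and hence properness; proper plus quasi-finite (of finite type) then gives finite. Note also that once surjectivity and finite fibers are known, the dimension equality $\dim EKOR_{K,x}=\dim KR_{I,x}$ already follows without the properness step, so the properness is needed only for the ``finite morphism'' assertion itself, which is what your argument supplies beyond the paper's one-liner.
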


\begin{remarks} 1) Recall that in Remark \ref{dim KR} we gave a conjectural formula for $\dim KR_{I, x}$. By Theorem \ref{pionEKOR}, this would also give a formula for the dimension of $EKOR_{K, x}$. 

2) It may be conjectured that the morphism in Theorem \ref{pionEKOR} is finite  \'etale. This would imply that all EKOR strata are smooth, which we also conjecture. This is proved by G\"ortz/Hoeve \cite{GHo} in the Siegel case.  
\end{remarks}

\section{Verification of the axioms in the Siegel case}\label{s: Siegel}

Let $g\geq 1$. Let $(V, \langle\, , \, \rangle)$ be a $\BQ$-vector space of dimension $2g$, equipped with a non-degenerate alternating form. We denote by $\mathbf G={\rm Gp}(V, \langle\, , \, \rangle)$ the group of symplectic similitudes. We fix a basis $e_1, \ldots, e_{2g}$ of $V$ such that the matrix of $\langle\, , \, \rangle$ is equal to 
\begin{equation*}
\begin{pmatrix}
0&H_g\\-H_g&0 
\end{pmatrix} ,
\end{equation*}
where $H_g$ is the unit anti-diagonal $g\times g$ matrix. 
For $j=0, \ldots, 2g-1$, we define lattices $\Lambda_j$ in $V\otimes\BQ_p$ by
\begin{equation}
\Lambda_j={\rm span}_{\BZ_p}\langle p^{-1}e_1,\ldots, p^{-1}e_j, e_{j+1},\ldots e_{2g}\rangle . 
\end{equation}
We extend this definition by periodicity to all $j\in \BZ$ by
\begin{equation*}
\Lambda_j=p^{-k}\Lambda_{\bar j},\quad j=2gk+\bar j, 0\leq \bar j\leq 2g-1 .
\end{equation*}
We consider non-empty subsets $J$ of $\BZ$ which are periodic (i.e., $J+2g\BZ=J$) and self-dual (i.e., $J=-J$). To such a subset, we associate the common stabilizer $K_J$ of the lattices $\{\Lambda_j\mid j\in J\}$ in $\mathbf G(\BQ_p)$. Then $K_J$ is a parahoric subgroup. For $J=\BZ$, we obtain an Iwahori subgroup $I$, and the map $J\mapsto K_J$ defines a bijection with all $2^g$ parahoric subgroups containing $I$. For $J=2g\BZ$ and $J=g+2g\BZ$, the parahoric subgroups $K_J$ are hyperspecial. 

We will define a moduli problem over $\Spec\, \BZ_{(p)}$ for the Shimura variety  associated to the Siegel Shimura datum $(\mathbf G, \{h\})$, where $\bK=K^p K_J$. As a preliminary, we recall some definitions from \cite[ch. 6]{RZ}.

Let $S$ be a $\BZ_{(p)}$-scheme. By a {\it $J$-set of abelian schemes} of dimension $g$ over $S$ we understand a set $A_J=\{ A_j\mid j\in J\}$ of abelian schemes of dimension $g$ over $S$ with a compatible family of isogenies
$$
\alpha_{j_1, j_2}\colon A_{j_1}\to A_{j_2}, \quad j_1<j_2
$$
of degree $p^{j_2-j_1}$ such that, for every $j\in J$, 
$$
\alpha_{j, j+2g}=p\cdot\bar\alpha_{j, j+2g} ,
$$ 
where $\bar\alpha_{j, j+2g}$ is an isomorphism. To a $J$-set of abelian schemes, we associate the {\it dual $J$-set} $\wt A_J$, defined by
$$
\wt A_j=(A_{-j})^\vee ,
$$
and where $\wt\alpha_{j_1, j_2}=(\alpha_{-j_2, -j_1})^\vee$. By a {\it principal polarization} of the $J$-set $A_J$ we mean an isomorphism of $J$-sets
$$
\lambda \colon A_J\to \wt A_J
$$
which is a polarization in the sense of \cite[Def.~6.6]{RZ}. If $0\in J$, this last condition just means that $\lambda$ induces a principal polarization $\lambda_0\colon A_0\to \wt A_0=A_0^\vee$. 

Let now $K^p\subset \mathbf G(\BA_f^p)$ be a (sufficiently small) open compact subgroup and fix $J$ as above. We consider the functor $\CM_{K^p, J}$ which to a $\BZ_{(p)}$-scheme $S$ associates the set of  isomorphism classes of triples $(A_J, \lambda, \bar\eta^p)$. Here $A_J$ is a $J$-set of abelian schemes of dimension $g$ over $S$ and $\lambda$ is a principal polarization of $A_J$. Finally, $\bar\eta^p$ is a level structure of type $K^p$ on $A_J$, i.e., a $K^p$-class of symplectic similitudes  
$$
\eta^p\colon\wh V^p(A_J)\simeq V\otimes\BA_f^p , 
$$ 
in the sense of \cite[\S 5]{Kpoints}. Here on the LHS  is the Tate module $\wh V^p(A_j)$, which is independent of $j\in J$. Then $\CM_{K^p, J}$ is representable by a quasi-projective scheme over $\Spec\,\BZ_{(p)}$ whose generic fiber is the {\it canonical model} of the Shimura variety ${\rm Sh}_{\bK}$ over $\BQ$. Furthermore, if $J=2g\BZ$, then $\CM_{K^p, J}$ is smooth over $\BZ_{(p)}$. Let us now check the axioms from section \ref{s:axioms} for the integral model ${\mathbf S}_{\bK}=\CM_{K^p, J}$.
\begin{altenumerate}
\item {\it Axiom \ref{ax funct} (compatibility with changes in the parahoric)}: It suffices to prove the desired properties for the morphism $\pi_{K_J, K_{J'}}$, when $J$ arises from  $J'$  by adding a single element $j$ (and its negative and their translates under $2g\BZ$). Let $j'\in J'$ be maximal with $j'<j$, and $j''\in J'$ be maximal with $j<j''$. Then factoring the isogeny $\alpha_{j', j''}$ as $\alpha_{j', j''}=\alpha_{j', j}\circ\alpha_{j, j''}$ is equivalent to giving a subgroup scheme of $A_{j'}[p]$ with certain properties. This is representable by a closed subscheme of a Hilbert scheme. The rest of Axiom \ref{ax funct} follows easily. 

\item {\it Axiom \ref{ax locmod} (existence of local models)}: The local model ${\mathbf M}^{\rm loc}_{K_J}$ coincides in this case with the {\it naive local model} defined in   \cite{GSp}. The morphism ${\boldsymbol \lambda}_K$ is explained and made explicit in the case at hand in \cite[\S 6]{H}.  That the strata are enumerated by $\Adm(\{\mu\})_K$ is a consequence of the equality, proved in \cite{KoR1},  between  $\Adm(\{\mu\})_K$ and ${\rm Perm}(\{\mu\})_K$ (the $\mu$-permissible set), cf. \cite[\S 4.3]{H}. The closure relation follows from the closure relation between affine Schubert cells, via the embedding of the special fiber of ${\mathbf M}^{\rm loc}_{K_J}$ into an affine flag variety  \cite{GSp}. 

\item {\it Axiom \ref{ax newton} (existence of the Newton stratification)}:  Let $x\in {Sh}_K$, and let $N$ be the common rational Dieudonn\'e module of the $J$-set of abelian varieties over $\bar\BF_p$ corresponding to $x$.  Then there exists a symplectic isomorphism 
\begin{equation}\label{compN}
N\simeq V\otimes W_\BQ(\bar\BF_p) . 
\end{equation}
The Frobenius $\mathbf F$ on $N$ can be written under the isomorphism \eqref{compN} as $\mathbf F=g\cdot(\id_V\otimes\sigma)$ with $g\in\mathbf G(W_\BQ(\bar\BF_p))$. The map $\delta_K$ now sends $x$ to the $\sigma$-conjugacy class of $g$. The compatibility with changing $K_J$ is obvious. The rest of the  axiom follows from \cite{RR}  (reduction to Grothendieck's upper semi-continuity theorem for the Newton polygon)\footnote{In fact, for the group of symplectic similitudes, this reduction is almost immediate.}. 

\item {\it Axiom \ref{ax Y} (joint stratification)}: Let $x\in Sh_K$, and let $\{ M_j\mid j\in J\}$ be the set of Dieudonn\'e modules $M(A_j)$ of the $J$-set of abelian varieties over $\bar\BF_p$ corresponding to $x$.   These form a periodic selfdual $W(\bar\BF_p)$-lattice chain inside the common {\it rational} Dieudonn\'e module $N$ of the $A_j$. By \cite[App. to ch. 3]{RZ}, there exists a symplectic isomorphism 
\begin{equation}\label{compD}
N\simeq V\otimes W_\BQ(\bar\BF_p)
\end{equation}
which carries the lattice chain $\{M_j\mid j\in J\}$ into $\{\Lambda_j\otimes W(\bar\BF_p)\mid j\in J\}$. The Frobenius $\mathbf F$ on $N$ can be written under the isomorphism \eqref{compD} as $\mathbf F=g\cdot(\id_V\otimes\sigma)$ with $g\in\mathbf G(W_\BQ(\bar\BF_p))$. The map
 $\Upsilon_K$ now sends $x$ to the class of $g$ modulo $\sigma$-conjugacy under $\breve K_J$.  The map $\lambda_K$ sends $x$ to the relative position
 $$
 \inv(M_J, \mathbf F M_J)\in W_{K_J}\backslash\tilde W/W_{K_J} , 
 $$
 hence the commutativity of the diagram appearing in Axiom \ref{ax Y} a) is clear.  
 
 We now prove part b).  It suffices to prove b) for $J=\BZ$. Indeed, we show in (v) below that $KT_{I, \tau}\neq\emptyset$, hence we may apply Lemma  \ref{prop KK'}.  
  
 Note that any principally polarized $p$-divisible group $(X, \lambda_X)$ is isomorphic to the $p$-divisible group of a principally polarized abelian variety $(A, \lambda)$. This is well-known (the proof proceeds in two steps: first, one shows that there exists a principally polarized abelian variety whose $p$-divisible group is isogenous to the given principally polarized $p$-divisible group; then one adjusts the principally polarized abelian variety in its isogeny class). Now an element of $\ell_I^{-1}(\Adm(\{\mu\}))$ mapping to the Dieudonn\'e module of $(X, \lambda_X)$ corresponds to a maximal isotropic chain of subgroups $G_\bullet$ of $X[p]$. Under the isomorphism of $X[p]$ with $A[p]$, the chain $G_\bullet$ defines a maximal isotropic chain of subgroups of $A[p]$, and hence a principally polarized $\BZ$-set of abelian varieties mapping to the chosen point of $\ell_I^{-1}(\Adm(\{\mu\}))$. 
 
 We now prove part c). It suffices to prove c) for the pair $K=K_J, K'=K_{J'}$, when $J$ arises from  $J'$  by adding a single element $j$ (and its negative and their translates under $2g\BZ$). Let $j'\in J'$ be maximal with $j'<j$, and $j''\in J'$ be maximal with $j<j''$. Let $x'$ be a point of $Sh_{K'}$ and let $y' $ be its image under $\Upsilon_{K'}$. Then $y'$ corresponds to the polarized $J'$-chain of principally polarized $p$-divisible groups $(X_\bullet, \lambda_{X,\bullet})$. As remarked in (i) above, and taking part b) into account,  giving a point $y\in \ell_K^{-1}(\Adm(\{\mu\})_K)$ above $y'$  is equivalent to giving a subgroup scheme $G$ of $X_{j'}[p]$ with certain properties.

The set of $\ov{\BF}_p$-points of the leaf above  $y'$  is given by  
\begin{equation}\label{leaf hyper}
\{ (A_\bullet, \lambda_\bullet, \bar\eta^p, \alpha)\mid \alpha\colon (A[p^\infty]_\bullet, \lambda_\bullet)\to (X_\bullet, \lambda_{X, \bullet})\}/\Aut(X_\bullet, \lambda_{X, \bullet}) .
\end{equation}
Here $\alpha$ denotes an isomorphism of polarized $J'$-sets of $p$-divisible groups over $\ov{\BF}_p$. 

The set of $\ov{\BF}_p$-points of the leaf above $y$  is given by  
\begin{equation}
\{ (A_\bullet, \lambda_\bullet, \bar\eta^p, \alpha)\mid \alpha\colon (A[p^\infty]_\bullet, \lambda_\bullet)\to (X_\bullet, \lambda_{X, \bullet})\}/\Aut(X_\bullet, \lambda_{X, \bullet}, G) .
\end{equation}
Here $\Aut(X_\bullet, \lambda_{X, \bullet}, G)$ denotes the subgroup of $\Aut(X_\bullet, \lambda_{X, \bullet})$ consisting of those automorphisms that preserve $G$. Indeed, to $(A_\bullet, \lambda_\bullet, \bar\eta^p, \alpha)$, where the first entry is a polarized $J'$-set of abelian varieties, we associate  the $J$-set of abelian varieties $A$  defined by $(A_\bullet, \lambda_\bullet, \bar\eta^p)$ and $\alpha^{-1}(G)$ (a  finite flat group scheme contained in $A_{j'}[p]$). 

The quotient $\Aut(X_\bullet, \lambda_{X, \bullet})/\Aut(X_\bullet, \lambda_{X, \bullet}, G)$ is finite, since $\Aut(X_\bullet, \lambda_{X, \bullet}, G)$ contains the subgroup of $\Aut(X_\bullet, \lambda_{X, \bullet})$ of automorphisms which induce the identity automorphism on $X_{j'}[p],$ which obviously has finite index in $\Aut(X_\bullet, \lambda_{X, \bullet})$. It follows that the fibers of the  morphism between the leaves above $y$, resp. above $y'$, are all finite of the same cardinality.

 \item {\it Axiom \ref{ax nonemptytau} (basic non-emptiness)}:  
 We first show that $KR_{I, \tau}$ is non-empty. 
 
 Consider the super-special principally polarized abelian variety $(A_0, \lambda_0)$ over $\bar\BF_p$, which arises as the $g$th power of a supersingular elliptic curve $E$, with its canonical principal polarization. Let $H$ be the kernel of the Frobenius endomorphism of $E$. Then $(A_0, \lambda_0)$, with its  totally isotropic complete flag by finite flat group schemes $(0)\subset G_1\subset\ldots\subset G_g \subset A[p]$ given by 
 $$
 G_i=H\times\ldots H \times (0)\ldots\times (0)\subset E^i\times E^{g-i} ,
 $$
 and with a suitable  level structure $\bar\eta^p$ defines a point  of $KR_{I, \tau}$.
 
 To deduce Axiom \ref{ax nonemptytau}, we appeal to Remark \ref{rem comp}. For the Siegel case a theory of compactifications exists, hence we deduce an identification
 \begin{equation}\label{pi-reduction}
 \pi_0(\CM_{K^p,\, \BZ}\otimes \ov{\BF}_p)=\pi_0({\rm Sh}_{\bK}) ,
 \end{equation}
 where $\bK=K^p\cdot I$ (here $I$ denotes the Iwahori subgroup corresponding to $J=\BZ$). The RHS of \eqref{pi-reduction} can be identified with
 $$
 \pi_0({\rm Sh}_{\bK})=\BA_f^\times/\BQ_+^\times\cdot c(\bK) ,
 $$
 where  $c\colon \bG\to \BG_m$ denotes the multiplier morphism, cf. \cite{De}. It follows that $\bG(\BA_f^p)$ acts transitively on $ \varprojlim_{K^p}\pi_0({\rm Sh}_{\bK})$. In other words, by changing the prime-to-$p$ level structure in $(A, \lambda, \bar\eta^p)$, one can pass to any connected component of ${ Sh}_I$.
   By varying the level structure, we obtain a point of $KR_{I, \tau}$ in an arbitrary connected component of ${\rm Sh}_I$. 
\end{altenumerate}

\end{document}